\documentclass[a4paper,12pt]{article}
\sloppy
\usepackage{amsmath,amsfonts,amssymb,amsthm}
\usepackage{authblk}
\usepackage{tikz}
\usepackage{geometry}

\usepackage[OT2,T1]{fontenc}
\newcommand\textcyr[1]{{\fontencoding{OT2}\fontfamily{wncyr}\selectfont #1}}

\renewcommand{\leq}{\leqslant}
\renewcommand{\bar}{\overline}
\renewcommand{\geq}{\geqslant}

\newcommand{\Np}{\mathbb{N}^{+}}
\newcommand{\N}{\mathbb{N}}
\newcommand{\defeq}{\mathrel{\mathop:}=}

\newcommand\z[1]{\draw (#1) node[minimum size=14pt, white, draw=black, fill=black] {\bf{0}};}
\newcommand\on[1]{\draw (#1) node[minimum size=14pt, draw=black] {\bf{1}};}

\newtheorem{thm}{Theorem}
\newtheorem{propn}[thm]{Proposition}
\newtheorem{lem}[thm]{Lemma}
\newtheorem{cor}[thm]{Corollary}
\newtheorem{conj}[thm]{Conjecture}
\theoremstyle{definition}
\newtheorem{defn}[thm]{Definition}
\newtheorem{xmpl}[thm]{Example}
\theoremstyle{remark}
\newtheorem{rmk}[thm]{Remark}

\title{On Long Arithmetic Progressions in Binary Morse-Like Words}

\author[1]{Ibai Aedo}
\author[1]{Uwe Grimm}
\author[1]{Yasushi Nagai}
\author[2]{Petra Staynova}
\affil[1]{School of Mathematics and Statistics, The Open University,\break 
Walton Hall, Milton Keynes MK7 6AA, UK}
\affil[2]{School of Computing and Engineering, University of Derby,
Kedleston Road, Derby DE22 1GB, UK}
\date{}

\begin{document}

\maketitle

\begin{abstract}
We present results on the existence of long arithmetic progressions in the Thue--Morse word and in a class of generalised Thue--Morse words. Our arguments are inspired by van der Waerden's proof for the existence of arbitrary long monochromatic arithmetic progressions in any finite colouring of the (positive) integers.
\end{abstract}

{\footnotesize\noindent Keywords: Combinatorics on words; binary languages; infinite words; bijective substitutions; Thue--Morse sequence; arithmetic progressions}

\section{Introduction}

The fixed point of the Prouhet--Thue--Morse (or Thue--Morse) substitution has numerous interesting properties. Many of these can be found in \cite{AS99,AS}, which is, respectively contains, an extensive survey of properties of the Thue--Morse language. 
The inspiration behind our results comes from one of the most well-known Ramsey-type theorems, namely van der Waerden's theorem. It states that any colouring of the integers contains arbitrarily long arithmetic progressions. 
Before we make this more precise, we introduce a couple of notions. 

\begin{defn}
Let $L$ and $d$ be positive integers. 
An \emph{arithmetic progression of difference\/ $d$ and length\/ $L$} is a finite sequence of integers 
\[
a_0, \ a_0+d,\ldots, \ a_0+(L{-}1)d.
\]
\end{defn}
\begin{defn}
Let $c$ be a positive integer. A \emph{colouring of a set of integers\/ $S\subseteq \N$ by\/ $c$ colours} is a map
$S\longrightarrow C$,
where $C$ is a finite set of $c$ distinct colours.  
\end{defn}

Now, we can state van der Waerden's theorem \cite{vdW,GR}.

\begin{thm}[van der Waerden \cite{vdW}]\label{vdW}
Let\/ $L$ and\/ $c$ be positive integers. 
There exists a positive integer\/ $N$ such that any\/ $c$-colouring of the segment\/ $\{1,2,\ldots,N\}$ contains a monochromatic arithmetic progression of length\/ $L$. 
\end{thm}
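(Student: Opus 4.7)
The plan is to follow the classical double-induction argument: an outer induction on the length $L$, and an inner induction on the number of ``color-focused'' arithmetic progressions one can locate. The base cases are trivial: for $L=1$ the choice $N=1$ works, and for $L=2$ the pigeonhole principle gives $N=c+1$.

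For the inductive step, suppose the theorem holds for length $L-1$ with any number of colors. I would introduce the following notion: $k$ monochromatic arithmetic progressions $P_1,\ldots,P_k$ of length $L-1$, of pairwise distinct colors $\chi_1,\ldots,\chi_k$ and common differences $d_1,\ldots,d_k$, are \emph{color-focused at} $f$ if each $P_i$ has the form $a_i,a_i+d_i,\ldots,a_i+(L-2)d_i$ with $a_i+(L-1)d_i=f$. The key intermediate claim to prove (by induction on $k\leq c+1$) is: there exists $N_k$ such that any $c$-coloring of $\{1,\ldots,N_k\}$ contains either a monochromatic AP of length $L$, or $k$ color-focused APs of length $L-1$. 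Taking $k=c+1$ in this claim forces the monochromatic AP, since $c+1$ pairwise distinct colors among $c$ available colors is impossible, giving $W(L,c)\leq N_{c+1}$.

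The base case $k=1$ of the inner induction follows directly from the outer hypothesis applied to length $L-1$ (either the AP extends to length $L$, or its right endpoint serves as the focus). For the step $k\mapsto k+1$, I would partition the interval $\{1,\ldots,N\}$ into consecutive blocks of length $N_k$ and view each block as a ``super-letter'' in an alphabet of $c^{N_k}$ symbols. Applying the outer induction hypothesis to length $L$ in this coloring (with $c^{N_k}$ colors) yields $L-1$ equally spaced identical blocks, say at positions determined by a common spacing $D$. Inside the first block, by the inner induction hypothesis, either we are done or we find $k$ color-focused APs focusing at some point $f$ inside that block. Since the blocks are identical, each of these $k$ APs can be translated across the $L-1$ equally spaced blocks, and together with the ``spacing progression'' through the translated focal points, these produce $k+1$ progressions all focusing at a single new point. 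If their colors are distinct we have found $k+1$ color-focused progressions; otherwise two of them share a color and their combination yields a monochromatic AP of length $L$.

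The main obstacle will be the bookkeeping in the step $k\mapsto k+1$: verifying that the translated progressions genuinely focus at a common target, that the ``spacing'' progression contributes a new color distinct from the existing $\chi_1,\ldots,\chi_k$ (or else delivers the desired length-$L$ monochromatic AP directly), and choosing $N$ large enough that the required $L-1$ identical equally spaced blocks exist while the target point still lies within $\{1,\ldots,N\}$. Once this combinatorial geometry is handled correctly, the iteration up to $k=c+1$ is immediate, and the bound $N$ one obtains is of tower-type in $L$, which is consistent with Theorem~\ref{vdW} as stated.
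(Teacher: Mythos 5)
This theorem is quoted in the paper without proof, with references to van der Waerden \cite{vdW} and the short Graham--Rothschild argument \cite{GR}; your sketch is precisely that classical colour-focusing double induction, and it is correct in substance. One slip to fix: in the step from $k$ to $k+1$ you say you apply ``the outer induction hypothesis to length $L$'' to the block colouring with $c^{N_k}$ colours, but the hypothesis available (and all you need, since you only use $L-1$ identical, equally spaced blocks) is the case $L-1$ with an arbitrary number of colours; invoking it for length $L$ would be circular. With that corrected, and with the colour coincidence in the focusing step resolved in the standard way (if the colour of the focus $f$ equals some $\chi_i$, append $f$ to the progression $P_i$ inside the first block to get a monochromatic progression of length $L$), your argument is the standard proof of the cited result.
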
 

By van der Waerden's theorem, if $v$ is an
infinite sequence over a finite alphabet $\mathcal{A}$
(that is, $v\in\mathcal{A}^{\mathbb{N}}$) then, for each
positive integer $L$, $v$ contains monochromatic arithmetic
progression of length $L$. Here, the existence
of an arithmetic progressions with
difference $d$ and length $L$ means that there is $n\in\mathbb{N}$ such that
$v_n=v_{n+d}=v_{n+2d}=\cdots=v_{n+(L-1)d}$.
We can then ask whether the word $v$ contains
arbitrary long arithmetic progressions for a
fixed positive difference $d$.
This question is closely related to the 
existence of infinite arithmetic progressions:
given $d$, is there $n\in\mathbb{N}$ such that
$v_n=v_{n+md}$ for each positive integer $m$?

If $v$ is a fixed point of a primitive constant-length substitution, 
the existence of infinite arithmetic progressions is related to the 
spectral theory of the corresponding dynamical system. 
Note that we can alternatively consider a bi-infinite sequence
$w\in\mathcal{A}^{\mathbb{Z}}$ that is a repetitive fixed
point of the same substitution and ask whether there exists
$n\in\mathbb{Z}$ such that $w_n=w_{n+md}$ for each $m\in\mathbb{Z}$.
This problem on bi-infinite sequences $w$ is equivalent to
the original problem on $v$, since both define the same language 
and hence the same shift space.

We recall the fact that
for a constant-length substitution, its dynamical system
(with $\mathbb{Z}$ shift-action) has pure point (discrete) spectrum 
if and only if the corresponding tiling dynamical system 
(with $\mathbb{R}$ translation-action) has pure point spectrum. Using this fact
and \cite[Theorem 5.1]{NAL}, we have the following.

\begin{thm}[{\cite[Theorem~5.1]{NAL}}]\label{Yasushi2}
Let $w$ be a repetitive fixed point of a primitive,
constant-length substitution.
Then\/ $w$ contains infinite arithmetic progressions if and only if\/ $w$ 
has pure point dynamical spectrum.
\end{thm}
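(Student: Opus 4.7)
The plan is to reduce the statement to Theorem~5.1 of \cite{NAL} by passing from the discrete $\Z$-shift dynamical system generated by $w$ to the corresponding $\mathbb{R}$-action tiling dynamical system, and then back again via the spectral equivalence already recalled in the excerpt above.

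First, I would associate to the bi-infinite word $w \in \mathcal{A}^{\Z}$ the \emph{suspension tiling} $T_w$ of $\mathbb{R}$ obtained by placing a labelled unit interval at each integer position, the $n$-th tile carrying the letter $w_n$. Since $w$ is a repetitive fixed point of a primitive constant-length substitution of length $\ell$, the tiling $T_w$ is a repetitive fixed point of the corresponding tile substitution (each unit tile inflated to length $\ell$ and then cut into $\ell$ unit sub-tiles according to the image word), and its hull $\Omega_{T_w}$ is a well-defined minimal $\mathbb{R}$-dynamical system. The condition ``$w$ contains an infinite arithmetic progression of difference $d$'' — that is, there exists $n \in \Z$ with $w_{n+md} = w_n$ for all $m \in \Z$ — is then exactly a statement about the label carried by the tile at integer position $n+md$ being constant in $m$, which is the kind of ``arithmetic coincidence of tile types'' condition that \cite[Theorem~5.1]{NAL} is about.

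Next I would invoke \cite[Theorem~5.1]{NAL}, which in this setting characterises the existence of such infinite arithmetic progressions of tiles in a primitive substitutive tiling as equivalent to the $\mathbb{R}$-translation tiling dynamical system $\Omega_{T_w}$ having pure point spectrum. Finally, using the recalled fact that for a constant-length substitution the $\Z$-shift dynamical system generated by $w$ and the $\mathbb{R}$-translation tiling dynamical system generated by $T_w$ have pure point spectrum simultaneously, the pure-point-spectrum conclusion transfers back to $w$, yielding both directions of the stated equivalence.

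The hard part, and the only place where real work is hidden, is the careful bookkeeping at the interface between the two formulations: one must verify that ``infinite arithmetic progression in $w$'' as defined here corresponds precisely to the geometric/dynamical coincidence property that \cite[Theorem~5.1]{NAL} characterises, rather than to a slightly weaker or stronger variant, and one must check that primitivity plus constant length are exactly the hypotheses needed both for the suspension spectral equivalence and for the application of \cite[Theorem~5.1]{NAL}. Once that dictionary is pinned down, the proof is the composition of the two cited results; the content is in the reduction, not in any further analysis.
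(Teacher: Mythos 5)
Your reduction is exactly what the paper does: it states the equivalence of pure point spectrum between the $\mathbb{Z}$-shift system and the suspended $\mathbb{R}$-action tiling system, and then cites \cite[Theorem~5.1]{NAL} for the tiling-side characterisation, with no further proof given. So your proposal matches the paper's (cited, not re-proved) argument in both structure and substance.
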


As a corollary of van der Waerden's theorem, we have that  for any $n\in\N$, the word $0^n$ 
occurs within the Thue--Morse sequence as an arithmetic progression. 
This makes an interesting contrast with the well-known fact that Thue--Morse is strongly cube-free (or overlap-free).
Another interesting result \cite{AFF} states that \emph{any} binary word appears as an arithmetic progression within the Thue--Morse word.
The non-existence of infinite monochromatic arithmetic progressions within infinite words has been addressed in \cite{LPZ,BR} and
the problem of what types of finite words can appear as arithmetic progressions within substitution (and other) sequences has been considered in several contexts. 
The notion of arithmetic complexity, which generalises the subword complexity, has, for instance,
been studied in \cite{F03,F05,ACF,CF}. Other related questions on Thue--Morse type systems concern the sequence of values at multiples of an integer \cite{MSS}, Gowers uniformity norms \cite{Kon} or prefix palindromic lengths \cite{F19}.

This paper is organised as follows. We start by reconsidering arithmetic progressions in the Thue--Morse word \cite{AS99,AS} in Section~\ref{sec:tm}, re-establishing results of Parshina \cite{Olga1} in a different way. Rather than using binary arithmetic, our proofs are mainly based on exploiting the substitution structure of the Thue--Morse word. In addition to the long arithmetic progressions observed in \cite{Olga1} for differences $d=2^n-1$, we find another series of long progressions for differences $d=2^n+1$. Our results generalise to a set of binary bijective substitutions which are analysed in Section~\ref{sec:gentm}. We note that our generalised Thue--Morse substitutions \cite{K,Martin,BG10,BGG} are different from those considered by Parshina \cite{Olga2}. In Section~\ref{sec:bij}, we establish a bound for the existence of certain arithmetic progressions for general bijective substitutions on an arbitrary alphabet, before concluding in Section~\ref{sec:con} with some open questions and proposed directions for further research. 

\section{Arithmetic progressions in the Thue--Morse word}\label{sec:tm}

We use $\N$ to denote the set of non-negative integers and  $\Np=\N\setminus\{0\}$ for the set of positive integers. Apart from Section~\ref{sec:bij}, we will work with the binary alphabet $\mathcal{A}=\{0,1\}$ throughout this paper. For a finite word $w=w_{0}w_{1}\dots w_{n-1}\in\mathcal{A}^n$, we denote its length by $|w|=n$. For any finite or infinite word $w$, we denote by $w_i$ its letter at position $i<|w|$, and by $w_{[i,j)}$ its subword (factor) $w_{i}w_{i+1}\dots w_{j-1}$ of length $j-i$, for $0\leq i< j\leq |w|$, with analogous definitions for bi-infinite words.

We consider the Thue--Morse word $v\in\{0,1\}^{\N}$ arising from the substitution 
\begin{equation}\label{eq:TM}
  \theta\colon\quad \begin{array}{c}
  0\,\mapsto\, 01\,\\ 
  1\,\mapsto\, 10,
  \end{array}
\end{equation}
as the fixed point $v=\lim_{n\to\infty}\theta^n(0)$. Note that this substitution is bijective and symmetric under the `bar' operation that exchanges the two letters (so $\overline{a}=1-a$ for $a\in\{0,1\}$, referred to as a `bar-swap symmetry' in \cite{BG}), which also implies that $\bar{v}=\lim_{n\to\infty}\theta^n(1)$ is another fixed point word. The word $v=v_0v_1v_2\dots$ satisfies
\[
  v_{2i}=v_{i} \quad\text{and}\quad v_{2i+1}=\bar{v_{i}}
\]
for all $i\in\N$. The letter $v_i$ is thus $0$ if the binary expansion of $i$ contains an even number of $1$s, and $1$ otherwise. Also, $v$ is overlap-free, which means that, for any finite, non-empty word $w$, $v$ does not contain $www_0$ as a subword, where $w_0$ denotes the first letter of $w$. Note also that $\theta^{n}(a)$ is reflection-symmetric if $n$ is even, and antisymmetric (meaning that the reflected word is the image under the bar operation) if $n$ is odd.

For $n\in\N$, we have the well-known recursions
\begin{equation}\label{eq:rec}
     \theta^{n+1}(a)\, =\, 
     \theta^{n}(a)\,\overline{\theta^{n}(a)} 
     \, = \, 
     \theta^{n}(a)\,\theta^{n}(\overline{a}),
\end{equation}
as can easily be shown by induction. This implies the following property.

\begin{lem}\label{lem:TM}
For all\/ $m>n\in\Np$ and\/ $a\in\{0,1\}$, the word\/ $\theta^m(a)$ consists of a sequence of the two subwords\/ $w=\theta^n(a)$ and\/ $\bar{w}=\theta^n(\bar{a})$, arranged according to the sequence that corresponds to\/ $\theta^{m-n}(b)$ with the letters\/ $b$ and\/ $\bar{b}$ replaced by the words\/ $w$ and\/ $\bar{w}$.
\end{lem}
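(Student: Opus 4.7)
The plan is to reduce Lemma~\ref{lem:TM} to the associativity of iterated substitution, namely the identity
\[
   \theta^{m}(a) \, = \, \theta^{n}\bigl(\theta^{m-n}(a)\bigr),
\]
where $\theta^{n}$ is extended, as usual, to a morphism on words by acting letter-by-letter. This is the one nontrivial ingredient, and once it is in hand the lemma is essentially a re-reading of the right-hand side.

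First I would establish the associativity identity by a straightforward induction. The cleanest way is to fix $n$ and induct on $m-n\geq 0$: the base case $m=n$ is immediate, and the inductive step follows because $\theta$ is a morphism, so $\theta\bigl(\theta^{n}(u)\bigr)=\theta^{n}(\theta(u))$ holds for every word $u$ by concatenative extension. Alternatively, one can observe that \eqref{eq:rec} already expresses the special case $m=n+1$ with $u=\theta^{m-n-1}(a)$, and bootstrap from there.

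Next, writing $L\defeq 2^{m-n}$ and
\[
  \theta^{m-n}(a) \, = \, b_{0}\,b_{1}\cdots b_{L-1},
\]
each letter $b_{i}$ lies in the binary alphabet $\{0,1\}=\{a,\bar a\}$. Applying $\theta^{n}$ letterwise and using the associativity identity then yields
\[
  \theta^{m}(a) \, = \, \theta^{n}(b_{0})\,\theta^{n}(b_{1})\cdots \theta^{n}(b_{L-1}),
\]
and by definition of $w=\theta^{n}(a)$ and $\bar w=\theta^{n}(\bar a)$, the factor $\theta^{n}(b_{i})$ equals $w$ precisely when $b_{i}=a$ and equals $\bar w$ precisely when $b_{i}=\bar a$. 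This is the decomposition claimed in the lemma, the ``arrangement'' being the one dictated by $\theta^{m-n}(a)$ under the letter-to-word replacement $a\mapsto w$, $\bar a\mapsto \bar w$.

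There is no real obstacle here; the only point requiring a little care is the formalisation of ``arranged according to'', for which the associativity identity is exactly the right tool, combined with the observation that the binary alphabet contains only the two letters $a$ and $\bar a$ so no third kind of block can appear. The bar-swap symmetry of $\theta$ noted after \eqref{eq:TM} is not strictly needed for the statement, since $\bar w$ is defined directly as $\theta^{n}(\bar a)$; however, it does ensure the pleasant identity $\bar w = \overline{\theta^{n}(a)}$, which will presumably be convenient in later arguments.
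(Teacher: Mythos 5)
Your argument is correct. It differs from the paper's proof in which way round the composition $\theta^{m}=\theta^{n}\circ\theta^{m-n}=\theta^{m-n}\circ\theta^{n}$ is exploited. You write $\theta^{m}(a)=\theta^{n}\bigl(\theta^{m-n}(a)\bigr)$ and apply $\theta^{n}$ letter by letter to $\theta^{m-n}(a)=b_{0}b_{1}\cdots b_{2^{m-n}-1}$, so the block decomposition and its arrangement drop out of nothing more than the morphism (concatenative) property of $\theta^{n}$; the recursion \eqref{eq:rec} is never needed. The paper instead writes $\theta^{m}(a)=\theta^{m-n}\bigl(\theta^{n}(a)\bigr)=\theta^{m-n}(w)$ and uses \eqref{eq:rec} in the form $\theta(w)=w\bar{w}$ to observe that $\theta$ acts on the superalphabet $\{w,\bar{w}\}$ exactly as the Thue--Morse substitution acts on $\{b,\bar{b}\}$, so the arrangement is literally $\theta^{m-n}(b)$ read over superletters. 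Your route is marginally more elementary; the paper's factorisation makes the self-similarity explicit (the induced substitution on level-$n$ superwords is again Thue--Morse), which is the viewpoint the later hierarchical arguments lean on. For the statement at hand the two are equivalent, and your identification of the arrangement as $\theta^{m-n}(a)$ under $a\mapsto w$, $\bar{a}\mapsto\bar{w}$ matches the lemma's phrasing with $b$ playing the role of $a$.
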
 
\begin{proof}
Let $w=\theta^n(a)$ and $\bar{w}=\theta^n(\bar{a})$. Then,
$\theta(w)=w\bar{w}$ by the recursion \eqref{eq:rec}, which is the same form as the Thue--Morse substitution, now on the alphabet $\{w,\bar{w}\}$. Hence $\theta^m(a)=\theta^{m-n}(\theta^n(a))=\theta^{m-n}(w)$ is the sequence $\theta^{m-n}(b)$ with $b,\bar{b}$ replaced by $w,\bar{w}$. 
\end{proof}

\begin{propn}
The Thue--Morse word does not contain arbitrarily long monochromatic arithmetic 
progressions for any fixed difference\/ $d$. 
\end{propn}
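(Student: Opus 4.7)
The plan is to combine a parity reduction on the difference $d$ with the hierarchical substitution structure furnished by Lemma~\ref{lem:TM} and the overlap-free property of $v$.

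First, for even $d$, say $d=2d'$, any monochromatic AP of difference $d$ starting at a position $a$ has all its terms at positions of the same parity as $a$. Using the defining recursions $v_{2i}=v_i$ and $v_{2i+1}=\bar v_i$, such an AP corresponds directly to a monochromatic AP in $v$ itself of difference $d'<d$ and the same length, starting at $\lfloor a/2\rfloor$ (with possibly swapped colour). Iterating this step, it suffices to bound the AP length when $d$ is odd. For the base case $d=1$, overlap-freeness immediately yields the bound $L\leq 2$, since $000$ and $111$ are not factors of $v$.

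For the main case, odd $d\geq 3$, the parity reduction fails to decrease the difference: the even-indexed sub-AP of an odd-$d$ progression has difference $2d$, which in turn reduces back to $d$. I would instead choose a substitution level $n$ adapted to $d$ (for instance $n$ with $2^n>d$, or $n$ related to the proximity of $d$ to a power of two, since the paper flags $d=2^n\pm 1$ as the critical cases) and apply Lemma~\ref{lem:TM} to view $v$ as a Thue--Morse word on the two-letter alphabet $\{w,\bar w\}$ with $w=\theta^n(0)$. A long monochromatic AP of difference $d$ at the letter level imposes strong constraints both on the intra-block positions visited inside each $w$ or $\bar w$ and on the resulting macro-block sequence. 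Since the latter is again Thue--Morse and hence itself overlap-free, a sufficiently long letter-level AP should force a forbidden overlap pattern $WWW_0$ at the macro level, yielding the required bound on $L$.

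The principal obstacle is the odd-$d$ case. The delicate steps are (i) choosing the substitution level $n$ in the right relation to the $2$-adic structure of $d$, and (ii) translating the hypothesis of a long AP into an overlap at level $n$ in a way that controls the interaction between intra-block positions and the induced macro-block sequence, since consecutive AP positions either lie in the same block (giving an intra-block constraint) or cross a block boundary (giving a macro-level step of size $0$ or $1$). Once this bookkeeping is handled carefully, overlap-freeness of Thue--Morse at every substitution level provides the contradiction and, as a by-product, a quantitative bound for $L$ in terms of $d$.
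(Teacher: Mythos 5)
Your reduction of even differences to odd ones via $v_{2i}=v_i$, $v_{2i+1}=\bar v_i$ is correct (it is the paper's observation $A(2^nd)=A(d)$), and the case $d=1$ via overlap-freeness is fine. But the case you yourself identify as the principal obstacle, odd $d\geq 3$, is not proved: you only assert that a sufficiently long progression ``should force'' a forbidden overlap $WWW_0$ in the induced sequence of level-$n$ blocks, and you leave both the choice of $n$ and the block-boundary bookkeeping as open ``delicate steps''. That assertion is the whole content of the statement, and it is doubtful as the intended mechanism: for the extremal differences $d=2^n\pm1$ there are genuine progressions of length about $d$ (Lemma~\ref{lem:ineq}), and the sequence of level-$n$ superwords they traverse is simply a Thue--Morse block sequence, with no macro-level overlap in sight; the obstructions that actually terminate such progressions in the paper are of a different nature --- the reflection (anti)symmetry of $\theta^n(a)$, the bar-swap relation $\theta^n(\bar a)=\overline{\theta^n(a)}$, and recognisability of level-$1$ block boundaries (see Lemma~\ref{lem:rec}, Propositions~\ref{TM-long-plus1} and~\ref{TM-long-minus1}, and Lemmas~\ref{lem:TM-bound-odd} and~\ref{lem:TM-bound-gen}) --- not overlap-freeness of the macro sequence. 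So as written the proposal is a plan with its central step missing, and the step as formulated would likely fail.

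For comparison, the paper proves this proposition by a soft argument that avoids all of this case analysis: if there were arbitrarily long monochromatic progressions of a fixed difference $d$, compactness of the Thue--Morse shift would produce a limit point of the hull containing an \emph{infinite} progression of difference $d$, which contradicts Theorem~\ref{Yasushi2} because the Thue--Morse system does not have pure point dynamical spectrum. A fully combinatorial, quantitative route in the spirit of your sketch does exist --- it is essentially what the paper carries out later in Section~\ref{sec:tm} for differences $d=2^n-k$ --- but to complete your argument you would need to replace the macro-overlap idea by those symmetry and recognisability arguments (or supply a correct substitute), since every odd $d$ must be covered, including the delicate boundary cases $d=2^n\pm1$.
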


\begin{proof}
This follows from the compactness of the associated dynamical system (see \cite{TAO} for general background) and from Theorem~\ref{Yasushi2}. 

Indeed, assume that the Thue--Morse word does contain arbitrarily long arithmetic progressions with difference $d$.  Note that if it contains a length $L$ arithmetic progression of colour 0, then it also contains a length $L$ arithmetic progression of colour 1. 
Thus, the colour of the arithmetic progression does not matter for this argument. 

Then, we can find positions $i_j$ for all $j\in\N$ such that $v_{i_j}$ is the first letter of a finite arithmetic progression of a fixed colour, difference $d$ and length $L_j$, satisfying $L_k>L_j$ for $k>j$, meaning that the lengths of the progressions are increasing.

Then $(T^{i_j}v)_{j\in\N}$, where $T$ denotes the shift map, is a sequence of words in the Thue--Morse dynamical system. By compactness, there exists a convergent subsequence, and the corresponding limit word contains an infinite monochromatic arithmetic progression of difference $d$ starting with its first letter.

Since the Thue--Morse sequence does not have pure point dynamical spectrum, this contradicts Theorem~\ref{Yasushi2}. Hence there are no arbitrarily long monochromatic arithmetic progressions of difference $d$ in the Thue--Morse word. 
\end{proof}

Thus, we may now introduce the following well-defined notion. 

\begin{defn}
For a positive integer $d$, let $A(d)$ be the maximum length of a monochromatic arithmetic progression of difference $d$ within the Thue--Morse word. 
\end{defn}

Since $\theta(v)=v$, we know that $A(2^n d)=A(d)$ for any $n\in\N$. In particular, since $v$ is overlap-free, this implies $A(2^n)=A(1)=2$ for all $n\in\N$. The following results shows that $A(d)=2$ holds only for differences $d$ that are powers of $2$.

\begin{lem}\label{lem:odd}
Let\/ $d>1$ be an odd integer. Then\/ $A(d)\geq 3$.
\end{lem}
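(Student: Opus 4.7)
The approach splits into two cases based on the parity of the binary digit sum $s_2(d)$.

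\textbf{Case 1 ($s_2(d)$ even).} The identity $v_n \equiv s_2(n) \pmod 2$, stated just before Lemma~\ref{lem:TM}, together with $s_2(2d) = s_2(d)$, yields $v_0 = v_d = v_{2d} = 0$. Hence $n = 0$ exhibits a monochromatic arithmetic progression of colour~$0$, difference $d$, and length~$3$.

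\textbf{Case 2 ($s_2(d)$ odd).} Here $v_d = 1$, so $n = 0$ fails. The plan is to exploit Lemma~\ref{lem:TM}. Let $m = \lfloor \log_2 d \rfloor$ and set $B = 2^{m+1}$; since $d$ is odd and $d > 1$, we have $2^m < d < B$, and the residual $d' := B - d$ is an odd positive integer with $d' < B/2$. Viewing $v$ as a concatenation of blocks of length $B$ via Lemma~\ref{lem:TM}, and writing $a \oplus b$ for addition in $\{0,1\}$ modulo~$2$, the substitution self-similarity together with the bar-swap symmetry gives the identity
\[
v_{qB + r} = v_q \oplus v_r, \qquad (q \ge 0,\ 0 \le r < B).
\]
Writing $n = qB + r$ with $r \in [2d', B)$, the three positions $n, n+d, n+2d$ lie in three consecutive blocks indexed $q, q+1, q+2$ at internal positions $r, r - d', r - 2d'$, and the 3-AP condition $v_n = v_{n+d} = v_{n+2d}$ becomes
\[
(v_q \oplus v_{q+1},\ v_{q+1} \oplus v_{q+2}) = (v_r \oplus v_{r - d'},\ v_{r - d'} \oplus v_{r - 2d'}).
\]

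Because $v$ is cube-free, the length-$3$ factor $v_q v_{q+1} v_{q+2}$ attains each of the six non-constant triples in $\{0,1\}^3$ for appropriate $q$, so the left-hand pair ranges over all elements of $\{0,1\}^2 \setminus \{(0,0)\}$. It therefore suffices to find $r \in [2d', B)$ such that $(v_{r - 2d'}, v_{r - d'}, v_r)$ is not monochromatic. If no such $r$ existed, every $p \in [0, B - 2d')$ would satisfy $v_p = v_{p + d'} = v_{p + 2d'}$, forcing the prefix $v_0 \cdots v_{B - 1}$ to have period $d'$; but $d' < B/2$ contradicts the overlap-freeness of $v$. Hence a valid $r$ exists, a matching $q$ can be chosen, and $n = qB + r$ delivers the desired 3-AP.

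The main obstacle is Case~2: establishing the block self-similarity identity, carrying out the three-block case analysis at block boundaries, and invoking overlap-freeness to rule out the pathological periodic scenario.
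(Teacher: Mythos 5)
Your Case~1 is exactly the paper's argument. Case~2 is a genuinely different route from the paper, which instead writes $d=2^m+2^n+k$ and verifies by counting binary digits that the explicit start $i=2^{m+1}+2^n$ gives $v_i=v_{i+d}=v_{i+2d}=0$; much of your route is sound: the identity $v_{qB+r}=v_q\oplus v_r$, the bookkeeping placing $n,n+d,n+2d$ in blocks $q,q+1,q+2$ at internal positions $r,r-d',r-2d'$, and the pair-matching reformulation are all correct. However, the concluding step has a genuine gap. From the assumption that every triple $(v_{r-2d'},v_{r-d'},v_r)$ with $r\in[2d',B)$ is monochromatic you obtain $v_p=v_{p+d'}$ only for $p\in[0,B-2d')\cup[d',B-d')$; this makes the prefix $v_{[0,B-d')}$ (and likewise $v_{[d',B)}$) $d'$-periodic, but it forces the whole length-$B$ prefix to be $d'$-periodic only when $B-2d'\geq d'$, i.e.\ $3d'\leq B$. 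Moreover, to contradict overlap-freeness you need a $d'$-periodic factor of length at least $2d'+1$, and the guaranteed length $B-d'$ exceeds $2d'$ only under essentially the same condition $B>3d'$; you only know $2d'<B$. In the range $B/3<d'<B/2$, equivalently $2^m<d<\tfrac{4}{3}\,2^m$, the assumption merely yields three equal blocks of length $B-2d'$ starting at positions $0$, $d'$, $2d'$, which are pairwise disjoint and non-adjacent, so no overlap arises and no contradiction follows. This range is nonempty within your Case~2: for example $d=19$ (so $m=4$, $B=32$, $d'=13$, $B-2d'=6$), and more generally $d=2^m+3$ for $m\geq 4$, as well as $d=21,35,37,41,\dots$. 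For such $d$ your proof does not conclude as written; you would need a different placement of the three terms relative to the $B$-blocks, a larger block length, or a separate argument for this range, whereas the paper's digit-counting construction covers all of Case~2 uniformly.

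A minor further point: the claim that all six non-constant length-$3$ words occur in $v$ does not follow from cube-freeness alone, which only excludes $000$ and $111$; for Thue--Morse the claim is true and immediate from the prefix $01101001$, so this is easily repaired, but it should be stated as a direct verification rather than a consequence of cube-freeness. The substantive problem is the periodicity step above.
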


\begin{proof}
Assume first that the binary expansion of $d$ contains an even number of $1$s. Since multiplication by $2$ conserves the number of $1$s, we have $v_0=v_d=v_{2d}=0$ and so $A(d)\geq 3$.

Now consider the case that the binary expansion of $d$ contains an odd number of $1$s, and hence at least $3$. Write $d=2^m+2^n+k$, with $m>n$ and $k<2^n$, so $k$ again contains an odd number of $1$s in its binary expansion. Let $i=2^{m+1}+2^{n}$, with $v_{i}=0$. Then $i+d=2^{m+1}+2^m+2^{n+1}+k$. If $m>n+1$, the number of $1$s in the binary expansion of $i+d$ is even. If $m=n+1$, then $i+d=2^{n+3}+k$ and so, the number of $1$s in its binary expansion is even too. Hence, $v_{i+d}=0$. Furthermore, $i+2d=2^{m+2}+2^{n+1}+2^{n}+2k$. If $2^n>2k$, the number of $1$s in the binary expansion of $i+2d$ is even. If $2^n\leq 2k<2^{n+1}$, we write $2k=2^n+t$, where the number of $1$s in the binary expansion of $t$ is even, and so,  $i+2d=2^{m+2}+2^{n+2}+t$ and its binary expansion has an even number of $1$s. Therefore, $v_{i+2d}=0$ and $A(d)\geq 3$.
\end{proof}

\begin{cor}
$A(d)=2$ if, and only if,\/ $d=2^n$ for some\/ $n\in\N$.
\end{cor}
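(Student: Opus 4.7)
The corollary is a clean two-line consequence of what has already been accumulated, so my plan is simply to assemble the two directions from the observations stated in the paragraph immediately before Lemma~\ref{lem:odd} and from the lemma itself.

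For the ``if'' direction, I would just quote the remark preceding Lemma~\ref{lem:odd}: because $\theta(v)=v$, the substitution invariance gives $A(2^n d)=A(d)$ for every $n\in\N$, and overlap-freeness of the Thue--Morse word forces $A(1)=2$ (a length-$3$ monochromatic arithmetic progression of difference $1$ would be a cube $aaa$, contradicting overlap-freeness). Hence $A(2^n)=A(1)=2$ for all $n\in\N$.

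For the ``only if'' direction, I would write an arbitrary non-power-of-two $d$ in the form $d=2^n m$ with $n\in\N$ and $m>1$ odd. Applying the invariance $A(2^n m)=A(m)$ and then Lemma~\ref{lem:odd} to $m$, I obtain $A(d)=A(m)\geq 3$, so $A(d)\neq 2$. Contraposing gives the desired implication.

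There is no real obstacle here: the two ingredients (substitution invariance combined with overlap-freeness, and Lemma~\ref{lem:odd}) together cover the two directions exactly. The only thing to be a little careful about is that the decomposition $d=2^n m$ with $m$ odd is well-defined only when $d\geq 1$ (which it is, since $d$ is a positive integer by the definition of $A(d)$), and that the lemma requires $m>1$, which holds precisely when $d$ is not a power of $2$.
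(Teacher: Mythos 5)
Your proof is correct and follows essentially the same route as the paper: the invariance $A(2^nd)=A(d)$ together with overlap-freeness gives $A(2^n)=2$, and Lemma~\ref{lem:odd} applied to the odd part of $d$ gives the converse. You merely spell out the ``if'' direction a little more explicitly than the paper, which leaves it to the remark preceding Lemma~\ref{lem:odd}.
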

\begin{proof}
Since $A(2^n d)=A(d)$ for all $n\in\mathbb{N}$ and $A(d)\geq 3$ for all odd $d>1$ by Lemma~\ref{lem:odd}, $A(d)=2$ implies that $d$ contains no odd prime factors. 
\end{proof}

Parshina proved the following result \cite{Olga1}, as well as a generalisation to similar sequences in larger alphabets \cite{Olga2,Olga3}. Her proofs for the Thue--Morse case \cite{Olga1} are based on a detailed analysis of binary arithmetic.

\begin{thm}[\cite{Olga1}]\label{thm:Olga}
For all $n\in\Np$, we have
\[
   \max_{d<2^n} A(d) \, = \, A(2^n-1) \,= \,
   \begin{cases}2^n+4, & \text{if\/ $2|n$,}\\
    2^n & \text{otherwise.}\end{cases}
\]
\end{thm}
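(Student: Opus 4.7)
My approach is to reduce the monochromatic-AP problem for $d = 2^n - 1$ to the existence of explicit factors in $v$ itself, which can then be controlled by overlap-freeness. Using Lemma~\ref{lem:TM}, I view $v$ as a concatenation of length-$2^n$ blocks $u \defeq \theta^n(0)$ and $\bar{u} = \theta^n(1)$, the sequence of block types being $v$ itself. For an AP of difference $2^n - 1$ starting at $i_0 = k_0 \cdot 2^n + j_0$ with $0 \le j_0 < 2^n$, a short computation shows the $s$-th term sits at offset $(j_0 - s) \bmod 2^n$ inside block $k_0 + s + \lfloor (j_0 - s)/2^n \rfloor$. Whenever the offset hits $0$ the next step remains in the same block at offset $2^n - 1$, so monochromaticity forces $u_0 = u_{2^n - 1}$; this equality holds precisely when $n$ is even, by reflection symmetry of $\theta^n(0)$, and this cleanly splits the analysis.

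For $n$ odd, block revisits are impossible, so any AP is confined to the initial ``phase'' $s = 0, \ldots, j_0$ of length at most $j_0 + 1 \le 2^n$. Taking $i_0 = 2^n - 1$ (so $k_0 = 0$, $j_0 = 2^n - 1$) reduces monochromaticity to $v_s = u_s$ for $0 \le s < 2^n$, which is immediate since $v$ begins with $u$, giving an AP of length $2^n$. For $n$ even, the identity $u_{2^n - 1 - r} = u_r$ converts monochromaticity into the requirement that $v$ contain, at position $k_0$, the explicit factor
\[
u_{j_0}\,u_{j_0-1}\cdots u_0 \;\cdot\; \underbrace{u_1\,u_2\cdots u_{2^n-1}}_{\text{one per full phase}} \cdots \underbrace{u_1\,u_2\cdots u_{q-1}}_{\text{partial final phase}},
\]
where the number of copies of $u_1\cdots u_{2^n-1}$ is determined by the AP length. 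To realise the lower bound I take $j_0 = 1$ and $L = 2^n + 4$, reducing the required factor to $u_1 \cdot u \cdot u_1$ of length $2^n + 2$; this appears in $v$ whenever three consecutive length-$2^n$ blocks have types $\bar{u}\,u\,\bar{u}$, and since the block-type sequence is $v$ itself and $v_2 v_3 v_4 = 1\,0\,1$, such a position exists, giving $A(2^n - 1) \ge 2^n + 4$.

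For the matching upper bound $A(2^n - 1) \le 2^n + 4$, I show that when $L = 2^n + 5$ the required factor always harbours one of the length-$7$ overlaps $0110110$, $1011011$, or $1101101$ (or their bars), and is therefore forbidden by overlap-freeness. Using $u_0 = 0$, $u_1 = u_2 = 1$, $u_3 = 0$, $u_4 = 1$ together with reflection, one verifies case by case: for $j_0 = 0$ the factor ends with $u_3 u_2 u_1 u_0 u_1 u_2 u_3 = 0110110$; for $j_0 = 1$ it ends with $u_4 u_3 u_2 u_1 u_0 u_1 u_2 = 1011011$; for $j_0 = 2$ it begins with $u_2 u_1 u_0 u_1 u_2 u_3 u_4 = 1101101$; and for $j_0 \ge 3$ the subword $u_3 u_2 u_1 u_0 u_1 u_2 u_3 = 0110110$ appears in the interior, since phases~1 and~2 together contribute enough letters on both sides of position $j_0$.

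The main obstacle I expect is the maximality assertion $\max_{d < 2^n} A(d) = A(2^n - 1)$, for which the argument above must be adapted to every odd $d$ with $d < 2^n - 1$. For such $d$ the AP may traverse several blocks per step and the pattern of block revisits is $d$-dependent, governed by the binary expansion of $d$, so the required factor in $v$ has a more intricate structure than the symmetric word above. I would handle this by case analysis on the binary expansion of $d$, in each case exhibiting a suitable forbidden overlap in the required factor; verifying that such a factor is always forced, uniformly in $d$, looks to be the main technical burden.
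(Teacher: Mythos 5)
Your treatment of the second equality, the exact value of $A(2^n{-}1)$, is essentially sound, and its skeleton is the same as the paper's: decompose $v$ into aligned level-$n$ superwords $u=\theta^n(0)$, $\bar{u}=\theta^n(1)$, use that the block-type sequence is $v$ itself together with the bar-swap bijectivity, split according to the (anti)symmetry of $\theta^n(a)$ for $n$ even/odd, and reduce the question of extending a progression to whether a short block-type word belongs to the Thue--Morse language. The difference is in the finishing mechanism: the paper (Lemmas~\ref{lem:ineq} and \ref{lem:rec}, Proposition~\ref{TM-long-minus1}) obtains the lower bound from the diagonals of the block substitution and the upper bound by showing a longer progression would force the block-type word $\bar{a}\bar{a}a\bar{a}\bar{a}$, which is not in the language, whereas you track offsets and phases explicitly and rule out length $2^n+5$ by locating one of the overlaps $0110110$, $1011011$, $1101101$ (or their complements) inside the required block-type factor, then invoke overlap-freeness. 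Your length-$(2^n{+}4)$ witness via the factor $u_1\cdot u\cdot u_1$ sitting inside block types $\bar{u}\,u\,\bar{u}$ (legal since $v_2v_3v_4=101$) is the same mechanism as the paper's use of $\bar{a}a\bar{a}$. I checked your phase bookkeeping and the four $j_0$-cases; they work, with only cosmetic issues (for $n=2$ your $j_0=1,2$ cases quote $u_4$, which does not exist, though the length-$7$ words still come out correctly), so this half I would accept once written out in full.

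The genuine gap is the first equality, $\max_{d<2^n}A(d)=A(2^n{-}1)$, which is half of the statement and which you explicitly defer as ``the main technical burden''. Your plan --- a case analysis on the binary expansion of each odd $d<2^n-1$, exhibiting a $d$-dependent forbidden overlap in a $d$-dependent required factor --- is not carried out, and it is unclear that it reduces to finitely many uniform cases, since the revisit pattern of blocks changes with $d$. The paper disposes of this uniformly and much more cheaply: for $d=2^n-k$ with $k$ odd and $3\leq k<2^{n-1}$, any progression of length $>2^n$ meets every residue class modulo $2^n$, so by bijectivity the superword $w=\theta^n(a)$ must satisfy $w^{}_{[0,k)}=w^{}_{[2^n-k,2^n)}$; since $w$ begins $a\bar{a}\bar{a}$, this prefix contains a doubled letter, which fixes the level-$1$ block boundaries, and the odd length $k$ then yields a parity contradiction (Lemma~\ref{lem:TM-bound-gen}; Lemma~\ref{lem:TM-bound-odd} handles odd $n$ directly via antisymmetry). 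Combined with $A(2d)=A(d)$ and induction on $n$ to cover $d\leq 2^{n-1}$, this bounds $A(d)$ by $2^n$ for every relevant $d$ with no per-$d$ analysis. As it stands, your proposal proves only the displayed value of $A(2^n{-}1)$, not the maximality assertion, so it does not yet prove the theorem.
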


In this paper, we will give exact expressions of $A(d)$ for certain values of $d$, first for the Thue--Morse sequence, and later also for generalised Thue--Morse sequences, which are  different generalisations from those considered in \cite{Olga1,Olga2,Olga3}. In particular, all our sequences are on a two-letter alphabet. Our results include a simple proof of the value of $A(2^n-1)$ stated in Theorem~\ref{thm:Olga}, but also identify a second series of long monochromatic arithmetic progressions in the Thue--Morse word, which becomes the `longest' in some cases, provided one considers the maximum over a different range for $d$.

Our proofs will follow van der Waerden's argument. For this, let us define the following block substitutions.

\begin{defn}\label{def:RTM-Block}
Let $\Theta$ be the block substitution on the alphabet $\{0,1\}$ defined by
\[
    \Theta\colon\quad 
     0\,\longmapsto\begin{matrix}0\,1\,\\1\,0,\end{matrix}\quad
     1\,\longmapsto\begin{matrix}1\,0\;\\0\,1.\end{matrix}
\]
\end{defn}

Iterating $\Theta$ on a single letter produces square blocks of size $2^n{\times} 2^n$, for instance
\[
\centerline{\begin{tikzpicture}[x=15pt,y=15pt]
\z{0,0} 
\node at (1.75,0) {$\longmapsto$};
\begin{scope}[shift={(3.5,-0.5)}]
\z{0,1}\on{1,1}
\on{0,0}\z{1,0}
\end{scope}
\node at (6.25,0) {$\longmapsto$};
\begin{scope}[shift={(8,-1.5)}]
\foreach \y in {0,3} {\z{0,\y}\on{1,\y}\on{2,\y}\z{3,\y}};
\foreach \y in {1,2} {\on{0,\y}\z{1,\y}\z{2,\y}\on{3,\y}};
\end{scope}
\node at (12.75,0) {$\longmapsto$};
\begin{scope}[shift={(14.5,-3.5)}]
\foreach \y in {1,2,4,7} {\z{0,\y}\on{1,\y}\on{2,\y}\z{3,\y}\on{4,\y}\z{5,\y}\z{6,\y}\on{7,\y}};
\foreach \y in {0,3,5,6}
{\on{0,\y}\z{1,\y}\z{2,\y}\on{3,\y}\z{4,\y}\on{5,\y}\on{6,\y}\z{7,\y}};
\end{scope}
\node at (23.25,0) {$\longmapsto$};
\node at (25,0) {$\dots$};
\end{tikzpicture}}
\]
where we used black (for $0$) and white (for $1$) squares to emphasise the block structure. Note that the blocks along both diagonals are always of the same colour.

\begin{lem}\label{lem:TM-Block}
For\/ $a\in\{0,1\}$ and\/ $n\in\Np$, the block\/ $\Theta^n(a)$, read row-wise from top to bottom, is the word\/ $\theta^{2n}(a)$ with\/ $\theta$ the Thue--Morse substitution of Eq.~\eqref{eq:TM}.
\end{lem}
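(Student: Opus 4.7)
The plan is to compute the entry at every position of $\Theta^n(a)$ and match it against the digit-sum characterisation of the Thue--Morse word already recalled in the paper. Write $s(m)$ for the parity of the number of $1$s in the binary expansion of $m$, so $v_m = s(m)$. Since $\theta^{2n}(1)=\overline{\theta^{2n}(0)}$, we have $\theta^{2n}(a)_m = a \oplus s(m)$ for $a\in\{0,1\}$ and $0\le m<2^{2n}$, where $\oplus$ denotes addition modulo $2$.

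I would then prove by induction on $n$ that the cell at row $r$, column $c$ of the $2^n \times 2^n$ array $\Theta^n(a)$ equals $a \oplus s(r) \oplus s(c)$ for all $0 \le r, c < 2^n$. The base case $n = 1$ is immediate from Definition~\ref{def:RTM-Block}: each of the two $2 \times 2$ patterns has $a$ on the diagonal and $\bar a$ off it, which matches $a \oplus r \oplus c$ for $r, c \in \{0, 1\}$. For the inductive step I would use $\Theta^{n+1}(a) = \Theta^n(\Theta(a))$, so that the $(r^*, c^*)$-quadrant of the $2^{n+1} \times 2^{n+1}$ array $\Theta^{n+1}(a)$ (with $r^*, c^* \in \{0,1\}$) is the $2^n \times 2^n$ block $\Theta^n(a \oplus r^* \oplus c^*)$. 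Writing $r = r^* \cdot 2^n + r'$ and $c = c^* \cdot 2^n + c'$ with $0 \le r', c' < 2^n$, the inductive hypothesis gives the entry $(a \oplus r^* \oplus c^*) \oplus s(r') \oplus s(c')$; combined with the evident identities $s(r) = r^* \oplus s(r')$ and $s(c) = c^* \oplus s(c')$, this collapses to $a \oplus s(r) \oplus s(c)$, as required.

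Finally, when $\Theta^n(a)$ is read row-wise, the letter at linear position $m = r \cdot 2^n + c$ is the $(r, c)$-entry, namely $a \oplus s(r) \oplus s(c)$. Because the binary expansion of $m$ is the concatenation of the $n$-bit expansions of $r$ and $c$, we have $s(m) = s(r) \oplus s(c)$, so the $m$-th letter of the row-wise reading equals $a \oplus s(m) = \theta^{2n}(a)_m$, which is the claim. I do not foresee any substantive obstacle: the recursive $2 \times 2$ block structure of $\Theta$ mirrors the splitting $m = r\cdot 2^n + c$ of binary indices exactly, so the digit-sum parity is the natural bookkeeping device, and the argument reduces to aligning these two decompositions.
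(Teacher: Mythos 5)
Your proof is correct, but it takes a different route from the paper. The paper's proof is purely structural: it observes that one application of $\Theta$, read row-wise, is $\theta(a)\,\theta(\overline{a})=\theta^{2}(a)$ (using the recursion \eqref{eq:rec}), and then iterates this observation by induction, never leaving the substitution framework. You instead establish the explicit coordinate formula $\Theta^{n}(a)_{r,c}=a\oplus s(r)\oplus s(c)$ (with $s$ the binary digit-sum parity) by induction on the quadrant decomposition $\Theta^{n+1}(a)=\Theta^{n}(\Theta(a))$, and then match it against the digit-sum characterisation $v_m=s(m)$ of the Thue--Morse word via $s(r\cdot 2^{n}+c)=s(r)\oplus s(c)$; all steps check out, including the base case from Definition~\ref{def:RTM-Block} and the identity $\theta^{2n}(a)_m=a\oplus s(m)$ from the bar-swap symmetry. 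The trade-off: your formula is more informative, since it immediately yields the diagonal and reflection properties of Lemma~\ref{lemma:TM-prop} (main diagonal constant $a$, anti-diagonal $a\oplus(n\bmod 2)$, symmetry in $r\leftrightarrow c$) as a by-product, whereas the paper has to argue those separately; on the other hand, it is exactly the kind of binary-arithmetic bookkeeping the paper deliberately avoids, and the paper's structural argument transfers verbatim to the generalised block substitutions $\Theta_{p,q}$ of Section~\ref{sec:gentm}, where your digit-sum device would need to be replaced by a base-$Q$ analogue (counting digits $\geq p$) rather than carried over unchanged.
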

\begin{proof}
This follows by induction from noticing that
\[
    \Theta\colon\quad 
     0\mapsto\begin{array}{c}
     \theta(0)\;\\[-2pt] \theta(1),\end{array}
     \quad
     1\longmapsto\begin{array}{c}
     \theta(1)\;\\[-2pt] \theta(0),\end{array}
\]
so that, read row-wise from the top, the image of $a$ under $\Theta$ is $\theta(a)\theta(\overline{a})=\theta^2(a)$.
\end{proof}

The images of letters under $\Theta^n$ have the 
following properties.

\begin{lem}\label{lemma:TM-prop}
For\/ $a\in\{ 0,1\}$ and\/ $n\in\Np$, the blocks\/ $\Theta^n(a)$ consists of only two types of row and column words, and are symmetric under reflection in either diagonals. All entries on the main diagonal are\/ $a$, while entries of the other diagonal are\/ $a$ for even\/ $n$ and\/ $\overline{a}$ otherwise.
\end{lem}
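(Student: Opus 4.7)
The plan is to proceed by induction on $n$, using the recursive block decomposition
\[
  \Theta^{n+1}(a)\,=\,\begin{pmatrix}\Theta^{n}(a)&\Theta^{n}(\bar a)\\[2pt]\Theta^{n}(\bar a)&\Theta^{n}(a)\end{pmatrix},
\]
which follows from $\Theta^{n+1}(a)=\Theta^{n}(\Theta(a))$ together with the definition of $\Theta$. A useful auxiliary fact, proved by an immediate preliminary induction, is that $\Theta$ commutes with the bar-swap, so that $\Theta^{n}(\bar a)$ is precisely the entry-wise bar-swap of $\Theta^{n}(a)$; this is why the two off-diagonal sub-blocks above are bar-swaps of the two diagonal sub-blocks.

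The base case $n=1$ is verified directly from the image of $a$ under $\Theta$. For the inductive step I would slightly strengthen the row/column claim to say that the two row types of $\Theta^{n}(a)$ form a bar-swap pair $R,\bar R$ (and similarly for columns). Given this, each row of the top half of $\Theta^{n+1}(a)$ is a concatenation of a row of $\Theta^{n}(a)$ with the corresponding row of $\Theta^{n}(\bar a)$ and is therefore either $R\bar R$ or $\bar R R$; the bottom half produces the same two words. These two concatenations are again bar-swaps of each other, of length $2^{n+1}$, so the strengthened hypothesis propagates. The column statement is identical after transposing the argument.

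The two diagonal symmetries transfer as follows. Reflection in the main diagonal of $\Theta^{n+1}(a)$ transposes each of the four sub-blocks and swaps the two off-diagonal ones; since both off-diagonal blocks coincide (both equal $\Theta^{n}(\bar a)$) and each sub-block is individually symmetric under transposition by induction, the array is preserved. The anti-diagonal symmetry is entirely analogous, using anti-transposition. For the diagonal entries, the main diagonal of $\Theta^{n+1}(a)$ lies inside the two copies of $\Theta^{n}(a)$ and is therefore constantly $a$ by induction; the anti-diagonal lies inside the two copies of $\Theta^{n}(\bar a)$, whose anti-diagonal entries are $\bar a$ when $n$ is even and $\bar{\bar a}=a$ when $n$ is odd, giving exactly $\bar a$ when $n+1$ is odd and $a$ when $n+1$ is even.

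The main obstacle is not any individual computation---each part is routine once the block decomposition and the bar-swap compatibility are set up---but rather the choice of inductive hypothesis. Asserting only that there are \emph{two} row/column types is too weak to iterate, since after one step one would a priori obtain up to four types; one must carry along the stronger fact that the two types form a bar-swap pair, and only then does the induction close cleanly across all four claims of the lemma.
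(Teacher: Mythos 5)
Your proof is correct, but it takes a different route from the paper's, at least for the row/column claim. The paper does not run a two-dimensional induction: it first shows (Lemma~\ref{lem:TM-Block}) that $\Theta^n(a)$ read row-wise is the one-dimensional superword $\theta^{2n}(a)=\theta^n(\theta^n(a))$, and then invokes the decomposition of Lemma~\ref{lem:TM} to conclude that every row is literally one of the two words $\theta^n(0)$, $\theta^n(1)$; the diagonal symmetry is attributed, rather tersely, to the symmetry of the block rule (which then also forces the columns to be $\theta^n(0)$ or $\theta^n(1)$), and the diagonal entries are handled by the same parity observation you make ($\Theta(a)$ has $\bar a$ on the anti-diagonal, $\Theta^2(a)$ has $a$). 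Your argument instead stays entirely in two dimensions, using the block decomposition $\Theta^{n+1}(a)=\Theta^n(\Theta(a))$ together with commutation of $\Theta$ with the bar-swap, and you correctly notice that the bare statement ``two row types'' is not inductively stable, so you strengthen it to ``a bar-swap pair $R,\bar R$'' --- this is exactly the point where a naive induction would leak up to four types, and your fix closes it. What the paper's route buys is the explicit identification of the rows and columns as the Thue--Morse superwords $\theta^n(0),\theta^n(1)$ (the form in which the block structure is actually exploited later, e.g.\ in Lemma~\ref{lem:ineq}), essentially for free from the one-dimensional machinery already in place; what your route buys is self-containedness (no appeal to Lemmas~\ref{lem:TM} and~\ref{lem:TM-Block}) and a fully explicit justification of the diagonal symmetries, which the paper only sketches. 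Both are sound; if you wanted to match the paper's later usage you could add the one-line remark that your pair $R,\bar R$ is in fact $\theta^n(a),\theta^n(\bar a)$, which follows from the base case and the recursion \eqref{eq:rec}.
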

\begin{proof}
As shown in Lemma~\ref{lem:TM-Block}, $\Theta^n(a)$ when read row-wise from the top is the word $\theta^{2n}(a)=\theta^n(\theta^n(a))$. By Lemma~\ref{lem:TM}, this consists of $2^n$ words from $\{\theta^{n}(0),\theta^{n}(1)\}$. So each row is one of these two words.

The symmetry in the diagonals follows from the symmetry of the block inflation $\Theta$, which also implies that all columns are either $\theta^{n}(0)$ or $\theta^{n}(1)$.

It is obvious from the inflation rule that the elements of $\Theta^n(a)$ on the main diagonal are always $a$. On the other diagonal, note that $\Theta(a)$ has $\overline{a}$ while $\Theta^2(a)$ has $a$, which implies the claim.
\end{proof}

\begin{lem}\label{lem:ineq}
For all\/ $n\in\Np$, we have that\/ $A(2^n{\pm} 1)\geq 2^n$. For even $n$, we further have $A(2^n{-}1)\geq 2^n+2$.
\end{lem}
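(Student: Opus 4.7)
The plan is to produce explicit long monochromatic arithmetic progressions inside the prefix $\theta^{2n}(0)$ of the Thue--Morse word $v$ by reading off the two diagonals of the square block $\Theta^n(0)$. By Lemma~\ref{lem:TM-Block}, this block read row-wise is exactly $\theta^{2n}(0)$, which is a prefix of $v$, so the entry at row $k$ and column $j$ of $\Theta^n(0)$ sits at position $k\cdot 2^n + j$ in $v$.

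First, I would look at the main diagonal: the entries $(k,k)$ for $0\leq k\leq 2^n - 1$ correspond to positions $k(2^n + 1)$, giving an arithmetic progression of common difference $2^n + 1$ and length $2^n$. By Lemma~\ref{lemma:TM-prop} every entry on this diagonal equals $0$, which establishes $A(2^n + 1)\geq 2^n$. Next, the anti-diagonal entries at $(k,\, 2^n - 1 - k)$ lie at positions $k\cdot 2^n + (2^n - 1 - k) = (k + 1)(2^n - 1)$, forming an arithmetic progression of difference $2^n - 1$ and length $2^n$. Lemma~\ref{lemma:TM-prop} tells us these entries share a single colour (namely $0$ for even $n$ and $1$ for odd $n$), so $A(2^n - 1)\geq 2^n$ in either case.

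For the refined bound $A(2^n - 1)\geq 2^n + 2$ when $n$ is even, the anti-diagonal AP has colour $0$, and I would extend it by one term at each end. Taking $k = -1$ gives position $0$, and $v_0 = 0$. Taking $k = 2^n$ gives position $(2^n + 1)(2^n - 1) = 2^{2n} - 1$; the binary expansion of this integer is a string of $2n$ ones, and since $2n$ is even we obtain $v_{2^{2n} - 1} = 0$ via the digit-sum characterisation of $v$. Both extensions match the colour of the anti-diagonal, yielding a monochromatic AP of difference $2^n - 1$ and length $2^n + 2$.

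The work reduces to index arithmetic on the two diagonals, together with a parity check at the two endpoints in the even-$n$ case; Lemma~\ref{lemma:TM-prop} already supplies the monochromaticity of the diagonals of $\Theta^n(a)$. I do not anticipate a serious obstacle, though the main point requiring care is verifying that the extra endpoint positions $0$ and $2^{2n} - 1$ lie within $\theta^{2n}(0)$ and carry the colour of the anti-diagonal, which is what makes the stronger bound work precisely when $n$ is even.
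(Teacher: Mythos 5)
Your proposal is correct and follows essentially the same route as the paper: reading the diagonals of the block $\Theta^n(a)$ via Lemma~\ref{lem:TM-Block} and Lemma~\ref{lemma:TM-prop}, and extending the anti-diagonal progression by the first and last letters of $\theta^{2n}(a)$ when $n$ is even. The only cosmetic difference is that you verify the endpoint $v_{2^{2n}-1}=0$ by the binary digit-sum characterisation, whereas the paper simply notes that $\theta^{2n}(a)$ begins and ends with $a$; both are valid and equivalent.
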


\begin{proof}
Consider the block $\Theta^n(a)$, which, when read row-wise from the top, is the word $\theta^{2n}(a)$. As shown in Lemma~\ref{lem:TM-Block}, all elements on the main diagonal are $a$, so we find that $A(2^n{+}1)\geq 2^n$. Similarly, the elements on the other diagonal are either all $a$ or $\bar{a}$, so 
we also have $A(2^n{-}1)\geq 2^n$. For even $n$, both diagonals have $a$ entries, and so the first and last letter of $\theta^{2n}(a)$ are also part of the arithmetic progression of difference $2^n{-}1$, which implies the claim.
\end{proof}

Before we establish the values for $A(2^n{\pm}1)$, we prove a useful result, which exploits the recognisability of the substitution; see \cite{TAO} and references therein for general background.

\begin{lem}\label{lem:rec}
For\/ $n>1$ and\/ $a\in\{0,1\}$, the word\/ $w=\theta^{n}(a)$ occurs in the Thue--Morse word either as the level-$n$ superword itself, or in the centre of two level-$n$ superwords\/ $\theta^{n}(\bar{a})\,\theta^{n}(\bar{a})$.

Furthermore, if $w$\/ is followed by the letter\/ $\bar{a}$, or if\/ $w$ is preceded by the letter\/ $a$ (for\/ $n$ odd) or\/ $\bar{a}$ (for\/ $n$ even), it is the level-$n$ superword.
\end{lem}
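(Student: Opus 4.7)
The plan is to induct on $n$, using the recursion $w = \theta^n(a) = \theta^{n-1}(a)\,\theta^{n-1}(\bar a)$ from Eq.~\eqref{eq:rec} to reduce the level-$n$ claim to the level-$(n-1)$ claim. For the base case $n=2$ I would verify directly by scanning a prefix of the Thue--Morse word: for each of the four choices of $(c,c') \in \{0,1\}^2$ and each offset $0 < j < 4$, check that the length-$4$ subword of $\theta^2(c)\theta^2(c')$ starting at position $j$ equals $\theta^2(a)$ only when $j=2$ and $c=c'=\bar a$.

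In the inductive step, consider an occurrence of $w$ at some position $i$, and apply the inductive hypothesis separately to its first half $u_1 = \theta^{n-1}(a)$ at position $i$ and its second half $u_2 = \theta^{n-1}(\bar a)$ at position $i + 2^{n-1}$. Each half is either level-$(n-1)$-aligned or centred in two level-$(n-1)$ superwords of the complementary letter, and since $i$ and $i + 2^{n-1}$ agree modulo $2^{n-1}$, $u_1$ and $u_2$ must fall into the same alternative. The ``both centred'' alternative is immediately contradictory: the level-$(n-1)$ superword that contains both the right half of $u_1$ and the left half of $u_2$ would simultaneously need to equal $\theta^{n-1}(\bar a)$ (from $u_1$'s centring) and $\theta^{n-1}(a)$ (from $u_2$'s centring). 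Hence both halves are level-$(n-1)$-aligned, so $i = 2^{n-1} p$ with $v_p = a$ and $v_{p+1} = \bar a$; a parity split on $p$ then produces the two cases of the lemma. If $p = 2q$ is even then $i = 2^n q$ is level-$n$-aligned, $w = \theta^n(v_q)$, and $v_q = a$. If $p = 2q+1$ is odd then $i = 2^n q + 2^{n-1}$, and $w$ equals the second half of $\theta^n(v_q)$ concatenated with the first half of $\theta^n(v_{q+1})$, namely $\theta^{n-1}(\bar{v_q})\,\theta^{n-1}(v_{q+1})$; matching this with $\theta^{n-1}(a)\,\theta^{n-1}(\bar a)$ forces $v_q = v_{q+1} = \bar a$, which is the centred case.

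For the second statement I would compute the flanking letters of $w$ in the centred case. The following letter is the first letter of $\theta^{n-1}(a)$, namely $a$. The preceding letter is the last letter of $\theta^{n-1}(\bar a)$; a short induction using Eq.~\eqref{eq:rec} shows that $\theta^k(c)$ ends in $c$ for even $k$ and in $\bar c$ for odd $k$, so this letter is $\bar a$ when $n$ is odd and $a$ when $n$ is even. Each of the three stated conditions ($w$ followed by $\bar a$; $w$ preceded by $a$ when $n$ is odd; $w$ preceded by $\bar a$ when $n$ is even) therefore directly contradicts the centred case, leaving only the aligned case.

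The main obstacle I anticipate is not any single computation but the parity bookkeeping: tracking the even/odd split of $p$ in the inductive argument together with the parity of $n$ in the flanking-letter analysis, and meshing these with the Thue--Morse identities $v_{2q} = v_q$ and $v_{2q+1} = \bar{v_q}$ consistently throughout.
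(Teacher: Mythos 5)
Your proposal is correct, but it follows a genuinely different route from the paper's proof. The paper works directly at a fixed level: it writes $\theta^{n}(a)=\theta^{n-2}(a\bar{a}\bar{a}a)$ and invokes recognisability of the Thue--Morse substitution (cited as background) to conclude that the two identical adjacent level-$(n{-}2)$ blocks $\theta^{n-2}(\bar{a})$ must straddle a level-$(n{-}1)$ boundary, so any occurrence of $w$ consists of exactly two level-$(n{-}1)$ superwords, which leaves only the two stated configurations; for the second claim it then determines the neighbouring level-$(n{-}1)$ superword and argues where the level-$n$ boundary must fall. You instead induct on $n$: splitting $w$ into its halves $\theta^{n-1}(a)\,\theta^{n-1}(\bar{a})$, using the congruence of their starting positions modulo $2^{n-1}$ to force both halves into the same alternative of the inductive hypothesis, killing the doubly-centred configuration through the clash on the shared middle superword, and finishing with the parity split on $p$, all anchored by the finite check at $n=2$. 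This makes your argument self-contained -- in effect you re-derive the alignment/recognisability facts rather than importing them -- at the price of the base-case enumeration and heavier positional bookkeeping; the paper's version is shorter given the cited background. Your handling of the second claim, eliminating the centred case because its flanking letters are forced (the letter $a$ after $w$, and the last letter of $\theta^{n-1}(\bar{a})$ before it, giving $\bar{a}$ for $n$ odd and $a$ for $n$ even), is a clean contrapositive of the dichotomy and yields exactly the paper's parity conditions. One small item the paper records that your write-up omits (and the statement does not strictly require, though it is used later): the centred configuration does actually occur, as seen from $\theta^{n}(\bar{a}\bar{a})=\theta^{n-1}(\bar{a})\,w\,\theta^{n-1}(a)$ with $\bar{a}\bar{a}$ a legal Thue--Morse factor.
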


\begin{proof}
Clearly $w$ can occur as the level-$n$ superword. The second possibility arises from
\begin{align*}
\theta^{n}(\bar{a}\bar{a}) &=
\theta^n(\bar{a})\,\theta^{n}(\bar{a})\\
&=\theta^{n-1}(\bar{a}a)\,\theta^{n-1}(\bar{a}a)\\
&=\theta^{n-1}(\bar{a})\,\theta^{n-1}(a)\,\theta^{n-1}(\bar{a})\theta^{n-1}(a)\\
&=\theta^{n-1}(\bar{a})\, w\,\theta^{n-1}(a).
\end{align*}
To show that these are the only two possibilities, we use that
\[
  \theta^{n}(a) = \theta^{n-2}(a\bar{a}\bar{a}a)=
  \theta^{n-2}(a)\,\theta^{n-2}(\bar{a})\,\theta^{n-2}(\bar{a})\,\theta^{n-2}(a),
\]
which holds for all $n>1$. By recognisability, the two adjacent level-$(n{-}2)$ superwords $\theta^{n-2}(\bar{a})$ cannot belong to the same level-$(n\!-\!1)$ superword, so we know that $\theta^{n}(a)$ has to consist of two level-$(n{-}1)$ superwords, which only leaves the two possibilities, since all level-$(n{-}1)$ boundaries are determined. 

If $w=\theta^{n-1}(a)\,\theta^{n-1}(\bar{a})$ is followed by a letter $\bar{a}$, the next level-$(n{-}1)$ superword is determined to be $\theta^{n-1}(\bar{a})$, and the level-$n$ superword boundary has to fall between $w$ and the subsequent letter $a$, which shows that $w$ is the level-$n$ superword. The same happens when $w$ is preceded by the final letter of the superword $\theta^{n-1}(a)$, which is $a$ for odd $n$ and $\bar{a}$ for even $n$. 
\end{proof}

\begin{propn}\label{TM-long-plus1}
For all\/ $n>1$, we have that\/ $A(2^n{+}1)=2^n+2$. 
\end{propn}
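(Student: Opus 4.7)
My \textbf{lower-bound} plan for $A(2^n{+}1)\ge 2^n{+}2$ extends Lemma~\ref{lem:ineq}'s $2^n$-term diagonal of $\Theta^n(a)$ by one term on each side. An easy induction from Definition~\ref{def:RTM-Block} supplies the identity $\Theta^n(c)_{i,j}=c\oplus v_i\oplus v_j$, where $\oplus$ denotes addition modulo~$2$. I locate an occurrence of $\theta^{2n}(a)$ in the Thue--Morse word flanked on both sides by $\theta^{2n}(\bar a)$; such an occurrence exists because the level-$(2n)$ reduction of $v$ is itself a Thue--Morse word and so contains the factor $\bar a a\bar a$. The AP position one step before the diagonal lands at entry $(2^n{-}2,2^n{-}1)$ of the preceding $\Theta^n(\bar a)$ block, with value $\bar a\oplus v_{2^n-2}\oplus v_{2^n-1}=a$ (consecutive binary digit sums have opposite parity, so $v_{2^n-1}\oplus v_{2^n-2}=1$), and one step after lands at entry $(1,0)$ of the following $\Theta^n(\bar a)$ block, with value $\bar a\oplus v_1\oplus v_0=a$. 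Together with the diagonal this gives an AP of length $2^n+2$.

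For the \textbf{upper bound} $A(2^n{+}1)\le 2^n{+}2$, I suppose for contradiction an AP $(p_k)_{k=0}^{2^n+2}$ of difference $d=2^n+1$ and constant colour $a$. Writing $p_k=2^n I_k+J_k$ with $0\le J_k<2^n$, the level-$n$ self-similarity $v_{2^n i+j}=v_i\oplus v_j$ turns the constraint $v_{p_k}=a$ into $v_{I_k}=a\oplus v_{J_k}$. The $(I,J)$-trajectory follows $(I,J)\mapsto(I+1,J+1)$ with a wrap $(I,2^n{-}1)\mapsto(I+2,0)$; since $L=2^n+3>2^n$ the trajectory wraps once. The pre-wrap segment forces $v_{[I_0,I_0+2^n-J_0)}=\theta^n(a)_{[J_0,2^n)}$, and the post-wrap segment forces $v_{[I_0+2^n-J_0+1,I_0+2^n+4)}=\theta^n(a)_{[0,J_0+3)}$. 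In the representative case $J_0=0$, the pre-wrap forces $\theta^n(a)$ itself to occur at $I_0$, and Lemma~\ref{lem:rec} says this occurrence is either (A)~a level-$n$ superword with $I_0=2^n M$ and $v_M=a$, or (B)~the central occurrence in $\theta^n(\bar a)\theta^n(\bar a)$. In case~(A), applying $v_{2^n i+j}=v_i\oplus v_j$ to the post-wrap constraints for $k=2^n$ and $k=2^n{+}1$ yields $v_{M+1}=a\oplus v_0\oplus v_1=\bar a$ and $v_{M+1}=a\oplus v_1\oplus v_2=a$ respectively, a contradiction from $v_0=0$ and $v_1=v_2=1$. Analogous arithmetic on the two left-extension constraints for $k=-1,-2$ yields contradictory demands on $v_{M-1}$ (using $v_{2^n-1}\oplus v_{2^n-2}=1$ versus $v_{2^n-2}\oplus v_{2^n-3}=0$). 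Case~(B) is even tighter: already a single extension in either direction forces $\bar a=a$, via $v_{2^{n-1}+1}=0$.

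For general offsets $J_0>0$, the same strategy applies: the partial tail-copy at $I_0$ and partial prefix-copy at $I_0+2^n-J_0+1$ together constrain a length-$(2^n{+}4)$ factor of the Thue--Morse word, and whichever of the two partial copies is longer (of length at least $\lceil(2^n{+}3)/2\rceil$) can be promoted, using the specific boundary letters it forces, to a full $\theta^n(a)$-occurrence controllable by Lemma~\ref{lem:rec}; the remaining constraints then reduce, after one application of the self-similarity, to an arithmetic identity on Thue--Morse values at small positions, which fails. The main obstacle is this case analysis over $J_0\in\{0,\ldots,2^n{-}1\}$ and the two alignment types from Lemma~\ref{lem:rec}: each individual sub-case collapses into a short computation where the opposite parities of digit sums of consecutive small integers supply the contradiction, but organising the enumeration uniformly is the main technical burden.
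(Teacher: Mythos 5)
Your lower bound is fine: the identity $\Theta^n(c)_{i,j}=c\oplus v_i\oplus v_j$, the flanked occurrence $\theta^{2n}(\bar a)\,\theta^{2n}(a)\,\theta^{2n}(\bar a)$, and the two boundary evaluations at $(2^n{-}2,2^n{-}1)$ and $(1,0)$ all check out; this is a more computational route to the same one-step-extension statement that the paper gets by identifying the level-$(2n{-}1)$ superwords bordering $\theta^{2n}(a)$. Your treatment of the representative case $J_0=0$ of the upper bound is also sound: both alignment types supplied by Lemma~\ref{lem:rec} are eliminated by the parity computations you give (the $k=-1,-2$ remark is vacuous there, since the progression has no such terms, but that is harmless).

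The genuine gap is the general case $J_0>0$, which is where the real content of the upper bound lies, and your reduction of it is asserted rather than proved. First, the key step --- that the longer of the two partial copies ``can be promoted \ldots{} to a full $\theta^n(a)$-occurrence controllable by Lemma~\ref{lem:rec}'' --- is unjustified and false as stated: knowing that a factor of $v$ coincides with a long proper prefix (or suffix) of $\theta^n(a)$ does not force that factor to extend to an occurrence of $\theta^n(a)$, so Lemma~\ref{lem:rec} cannot be applied at level $n$. What one can extract is weaker: a prefix copy of length $>2^{n-1}$ contains $\theta^{n-1}(a)$ followed by $\bar a$, so the second part of Lemma~\ref{lem:rec} pins down the level-$(n{-}1)$ alignment only, and the contradiction must then be re-derived from that coarser information --- which is exactly the enumeration you defer. (This is in effect what the paper does: it anchors at a progression term with $J=0$, respectively at the preceding term with $J=2^n{-}1$, and uses the ``followed by / preceded by'' clause of Lemma~\ref{lem:rec} to recover the true superword structure from partial data before running the extension analysis around $\theta^{2n}(a)$.) Second, your claim that the trajectory ``wraps once'' fails for $J_0\in\{2^n{-}2,2^n{-}1\}$, where there are two wraps and the constrained $I$-level pattern has two inserted free letters; these sub-cases are not covered by your sketch, and they are not peripheral: the extremal progression of length $2^n{+}2$ constructed in your lower bound has $J_0=2^n{-}1$, so this is precisely the tight configuration in which ruling out a $(2^n{+}3)$-rd term requires the most careful bookkeeping. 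Until the case analysis over $J_0$ and the two alignment types is actually carried out with a correct replacement for the promotion step, the inequality $A(2^n{+}1)\leq 2^n+2$ is not established.
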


\begin{proof}
We first show that there exist arithmetic progressions of length $2^n+2$. From the proof of Lemma~\ref{lem:ineq}, we already have an arithmetic progression of length $2^n$ in the word $w=\theta^{2n}(a)$, with the first and final letter being part of the progression. 

Now consider how many letters can be added at either end of the progression of length $2^n$ in the superword $w=\theta^{2n}(a)$. From Lemma~\ref{lem:rec}, we know that the word $w=\theta^{2n-1}(a)\,\theta^{2n-1}(\bar{a})$ and that these are the actual level-$(2n{-}1)$ superwords. There are for possibilities how this superword can be bordered by level-$(n{-}1)$ superwords: we can have $\theta^{2n-1}(b)\, w\, \theta^{2n-1}(c)$ with $b,c\in\{a,\bar{a}\}$. 

Since $d=2^n+1$, no element of the progression is in the level-$n$ superwords adjacent at either end. Since all superwords start or end with a level-$2$ superword $b\bar{b}\bar{b}b$, the next two members on either side would have to be the first and the second letter of the same superword $\theta^n(\bar{b})$, which however are different letters (where we use that $n>1$). This shows that the progression can at most be extended by one in either direction. Since all combinations of superwords on either side can appear, there are instances where the progression can be extended by exactly one step in both directions, showing that $A(2^n{+}1)\ge 2^n+2$.
 
It remains to be shown that this is the maximum length of a progression. Assume that we have a progression of length $L>2^n$. The elements in this progression hit each position in the superwords of level-$n$ at least once. Now, once we hit the first position of such a superword, the following members of the progression determine the sequence of level-$n$ superwords uniquely, which is the same sequence as that of the superword $w$. If there are at least $2^n$ terms in the progression following this position, they determine the level-$(2^n{-}1)$ superword by the second part of Lemma~\ref{lem:rec}, and hence we are back considering the word $w$ from above. If there are fewer terms left, we can use the previous member of the progression which hits in the last position of a level-$n$ superword, and determine the sequence of level-$n$ superwords preceding it in the progression. Again, this determines the level-$(2^n{-}1)$ superword by the second part of Lemma~\ref{lem:rec}, and we are back in the case considered above, showing that $L\le 2^n+2$. 
\end{proof}

Similarly, as mentioned above, we can rederive the value of $A(2^n{-}1)$ stated in Theorem~\ref{thm:Olga}.

\begin{propn}\label{TM-long-minus1}
For all\/ $n\in\N$, $n>1$, we have that
\[
   A(2^n{-}1) \, = \, \begin{cases}
   2^n+4, & \text{if\/ $2|n$},\\
   2^n, & \text{otherwise}.
   \end{cases}
 \]
\end{propn}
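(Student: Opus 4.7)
My plan is to mirror the proof of Proposition~\ref{TM-long-plus1}, using the anti-diagonal of $w=\theta^{2n}(a)$ in place of its main diagonal, and leaning on Lemmas~\ref{lemma:TM-prop} and~\ref{lem:rec} as the principal tools. Lemma~\ref{lem:ineq} already gives $A(2^n{-}1)\geq 2^n$ for all $n$ and $A(2^n{-}1)\geq 2^n+2$ for even $n$, so only the even case requires an extra step at the lower bound. I will realise $w$ as a level-$(2n)$ superword flanked on both sides by $\theta^{2n}(\bar a)$---possible because $\bar a a \bar a$ occurs in the Thue--Morse word (both $010$ and $101$ are factors)---and then verify using Lemma~\ref{lemma:TM-prop} that the positions $-(2^n{-}1)$ and $2^{2n}+2^n{-}2$ relative to the start of $w$ both carry the letter $a$. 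These positions sit at (row $2^n{-}1$, col $1$) and (row $0$, col $2^n{-}2$) of the flanking $\Theta^n(\bar a)$ blocks; for even $n$ both of these rows equal $\theta^n(\bar a)$, whose second and second-to-last letters are both $a$. This extends the length-$(2^n+2)$ AP of Lemma~\ref{lem:ineq} by one step at each end.

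For the upper bound, let the AP have colour $c$, difference $d=2^n{-}1$ and length $L\geq 2^n+1$. Since $d\equiv-1\pmod{2^n}$ the columns $(p_k\bmod 2^n)$ cycle through all residues, so some AP index $k_0\leq L-2$ lies at column $0$. The members $x_{k_0}$ and $x_{k_0+1}$ are then the first and last letters of a common level-$n$ superword $\theta^n(e)$. For odd $n$ these letters are $e$ and $\bar e$, which cannot both equal $c$, forcing $L\leq 2^n$ and closing the odd case. For even $n$ they are both $e$, so the pivot block is $\theta^n(c)$; inspecting the $2^n$ consecutive AP members from $x_{k_0}$ through $x_{k_0+2^n}$---which I assume fit in the AP by passing if necessary to a translated sub-AP---the monochromaticity together with the reflection symmetry $v_k=v_{2^n-1-k}$ of $\theta^n(0)$ for even $n$ forces the block-level word on the $2^n$ blocks from the pivot onwards to equal $\theta^n(c)$. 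Applying Lemma~\ref{lem:rec} at the block level (the sequence of level-$n$ blocks is itself the Thue--Morse word), this strip is either (A) itself a level-$(2n)$ superword $w'=\theta^{2n}(c)$, or (B) the middle part of two adjacent level-$(2n)$ superwords both equal to $\theta^{2n}(\bar c)$.

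In Case~A the AP can contain the top-left and bottom-right corners of $w'$, for $2^n+2$ members inside $w'$, and extends by one step at each end exactly when both flanking level-$(2n)$ superwords are $\theta^{2n}(\bar c)$, giving $2^n+4$. A further extension would demand the values at (row $2^n{-}2$, col $2$) or (row $1$, col $2^n{-}3$) of a $\Theta^n(\bar c)$ neighbour to be $c$; but Lemma~\ref{lemma:TM-prop} together with a short computation of the first and last three letters of $\theta^n(c)$ shows both are $\bar c$, capping the AP at length $2^n+4$. In Case~B the same style of check---applied at (row $2^{n-1}{-}1$, col $1$) and (row $2^{n-1}$, col $2^n{-}2$) of the flanking $\Theta^n(\bar c)$ blocks, using the decomposition $\theta^n(\bar c)\theta^n(\bar c)=\theta^{n-1}(\bar c)\theta^n(c)\theta^{n-1}(c)$---returns $\bar c$, ruling out any step outside $w'$ and capping Case~B at $2^n+2$. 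The main obstacle is the Case~A second-extension calculation together with the bookkeeping needed to reduce an arbitrary AP to one for which the full pivot strip is visible; once these are in place, the Case~B check is analogous, and combining the two cases gives $A(2^n{-}1)\leq 2^n+4$ for even $n$, matching the lower bound.
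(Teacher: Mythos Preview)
Your lower bound for even $n$ and your upper bound for odd $n$ are correct and essentially match the paper's arguments. The difference, and the weak point, lies in the even-$n$ upper bound.

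The paper takes a much shorter route here. Once the progression hits the pivot pair (positions $0$ and $2^n-1$ of some level-$n$ superword $\theta^n(a)$), the paper simply computes the block types forced by extending \emph{two} steps to each side: since positions $1$, $2$, $2^n{-}2$, $2^n{-}3$ of $\theta^n(b)$ all equal $\bar{b}$ for even $n$, the five consecutive level-$n$ superwords must be $\theta^n(\bar a)\,\theta^n(\bar a)\,\theta^n(a)\,\theta^n(\bar a)\,\theta^n(\bar a)$. But $\bar a\bar a a\bar a\bar a$ is not a factor of the Thue--Morse word, so no pivot can have two or more AP terms on \emph{both} sides; a short pigeonhole on the location of pivots then gives $L\le 2^n+4$. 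No appeal to Lemma~\ref{lem:rec} or any Case~A/B split is needed.

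Your route via Lemma~\ref{lem:rec} can be made to work, but as written it has a genuine gap. You need the $2^n+1$ consecutive AP members $x_{k_0},\ldots,x_{k_0+2^n}$ to determine the block word $\theta^n(c)$, yet you only know $L\ge 2^n+1$ and $k_0\in\{0,\ldots,2^n-1\}$; if $k_0$ is near $2^n-1$ there are nowhere near enough terms to the right. The phrase ``passing if necessary to a translated sub-AP'' does not fix this: there is no other pivot to pass to unless $L\ge 2^n+k_0+1$. The repair (as in the paper's proof of Proposition~\ref{TM-long-plus1}) is to also read block types \emph{backwards} from the pivot, using the terms $x_{k_0-1},x_{k_0-2},\ldots$; by the palindrome symmetry of $\theta^n$ for even $n$, the forced block types on the left mirror those on the right, so between the two directions you recover the full word $\theta^n(c)$ centred appropriately. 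Once this is made precise, your Case~A/B analysis goes through, though your Case~B coordinates are somewhat garbled (the relevant checks are at block $r-1$, column $1$, and block $r+2^n$, column $2^n-2$, both of which lie in blocks of type $c$ and hence carry $\bar c$). Note, however, that all of this machinery is avoided entirely by the paper's five-block forbidden-pattern argument.
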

\begin{proof}
From Lemma~\ref{lem:ineq}, we already know that 
$A(2^n{-}1)\geq 2^n$ for $n$ odd and $A(2^n{-}1)\geq 2^n+2$ for $n$ even. 

Let us first consider the case that $n$ is odd. Since the superwords $\theta^n(a)$ are antisymmetric under reflection, their first and last letters differ. This means that once
our progression hits the first letter of a superword, it stops. Since addition by $2^n-1$ means that the elements in the progression cycle through all positions in the superword, we obtain the upper limit $A(2^n{-}1)\leq 2^n$, implying that $A(2^n{-}1)=2^n$.

Now consider the case of $n$ even. Here, the superwords are symmetric under reflection, so we can have two elements of the progression within one superword. Assume that this occurs for a superword $\theta^n(a)$ which has first and last letter $a$. If the progression continued to the left and to the right, the neighbouring superwords are determined by having the letter $a$ at the next two positions, which force both of them to be $\bar{a}$, and by symmetry this applies to either side of the superword, hence we obtain $\theta^n(\bar{a})\theta^n(\bar{a})\theta^n(a)\theta^n(\bar{a})\theta^n(\bar{a})$. Clearly, the word $\bar{a}\bar{a}a\bar{a}\bar{a}$ does not belong to the Thue--Morse language. This means that once the progression hits the  first and last letter, it can only be extended by at most one either way, so we obtain $A(2^n{-}1)\leq 2^n+4$. That this bound is attained can be seen by looking at $\theta^{2n}(a)$, which by Lemma~\ref{lem:ineq} contains a progression of length $2^n+2$ starting at ending with a superword $\theta^n(a)$ that contain two elements of the progression. The superwords either side of $\theta^{2n}(a)$ can be both $\theta^{2n}(\bar{a})$, since $\bar{a}a\bar{a}$ is in the Thue--Morse language. Hence the progression can be continued by one additional step to either direction, and the bound is attained.  
\end{proof}

The following two lemmas prove that there are no longer arithmetic progressions for differences up to powers of $2$.

\begin{lem}\label{lem:TM-bound-odd}
Let\/ $n\in\Np$ and\/ $0<k<2^{n-1}$ be both odd, and consider\/ $d=2^n-k$. Then\/ $A(d)\leq 2^n$. \end{lem}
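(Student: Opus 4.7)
The plan is to follow the same broad strategy as Propositions~\ref{TM-long-plus1} and \ref{TM-long-minus1}: assume for contradiction that the Thue--Morse word contains a monochromatic arithmetic progression of difference $d=2^n-k$ and length $L\geq 2^n+1$, and derive a contradiction by combining the residue dynamics modulo $2^n$ with the antipalindromic structure of $\theta^n(a)$ that is available when $n$ is odd.

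First I would parametrise the progression as $i_j=i_0+jd$ for $0\leq j\leq L-1$ and decompose each $i_j=q_j\cdot 2^n+s_j$ with $s_j\in\{0,1,\dots,2^n-1\}$, so that $s_j$ is the position of $i_j$ inside its containing level-$n$ superword. Then $s_{j+1}\equiv s_j-k\pmod{2^n}$; since $k$ is odd we have $\gcd(k,2^n)=1$, so the sequence $(s_j\bmod 2^n)$ is $2^n$-periodic and hits every residue in $\{0,1,\dots,2^n-1\}$ exactly once per period. Because $L-1\geq 2^n$, there is a unique $j^{\ast}\in\{0,\dots,L-2\}$ with $s_{j^{\ast}}=(k-1)/2$, an integer precisely because $k$ is odd.

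The key step is the observation that $s_{j^{\ast}}=(k-1)/2<k$ forces $s_{j^{\ast}}+d<2^n$, so $i_{j^{\ast}+1}$ lies in the \emph{same} level-$n$ superword as $i_{j^{\ast}}$, at internal position $s_{j^{\ast}+1}=2^n-(k+1)/2$. A direct check gives $s_{j^{\ast}}+s_{j^{\ast}+1}=2^n-1$, so these two positions are reflections of one another about the centre of the length-$2^n$ block. Applying the antipalindromic identity $\theta^n(a)_{2^n-1-p}=\overline{\theta^n(a)_p}$, valid for odd $n$, the two progression letters inside this shared superword must be bar-swaps of each other, contradicting monochromaticity. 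This forces $L\leq 2^n$, as claimed.

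The main difficulty I anticipate is spotting the correct residue to exploit: one needs $s<k$ (so that two consecutive AP terms stay inside a single superword) and simultaneously $s+(2^n-k)=2^n-1-s$ (so that the corresponding pair of internal positions is self-reflecting and antipalindromicity can be invoked). Both conditions collapse to the single value $s=(k-1)/2$, which exists as an integer exactly because $k$ is odd, while the hypothesis that $n$ is odd supplies antipalindromicity and the condition $0<k<2^{n-1}$ places $d$ in the expected regime where the ``same superword'' interpretation of a step is unambiguous. Once this self-reflecting residue has been identified, the remainder of the proof is essentially bookkeeping.
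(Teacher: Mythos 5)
Your proposal is correct and follows essentially the same route as the paper's proof: both arguments use coprimality of $d$ with $2^n$ to force the progression to visit the residue $(k-1)/2$ modulo $2^n$, observe that the next term then lands at the mirror position $2^n-1-(k-1)/2$ of the same level-$n$ superword, and invoke the antisymmetry $\theta^n(a)_{m}=\overline{\theta^n(a)_{2^n-1-m}}$ for odd $n$ to obtain the contradiction. (The word ``unique'' for $j^{\ast}$ is inessential and slightly off when $L-1>2^n$, but only existence is used.)
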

\begin{proof}
Assume that there exists an arithmetic progression with difference $d$ of length $L>2^n$. Since 
$d$ is odd and hence coprime with $2^n$, looking at elements of the progression within superwords of length $2^n$ will meet every position in a superword, including the position $m=(k-1)/2$. However, the superwords of length $2^n$ for $n$ odd are 
antisymmetric under reflection, so 
\[
   \theta^n(a)_{m}= \overline{\theta^n_{2^n-1-m}(a)},
\]
which shows that not both $m$ and $2^n-1-m=m+d$ can be in the arithmetic progression, in contradiction to our assumption. This means that the progression cannot be longer than the number of rest classes modulo $2^n$, which establishes the claim.
\end{proof}

\begin{lem}\label{lem:TM-bound-gen}
Let\/ $n>1$ and let\/ $2<k<2^{n-1}$ be odd, and consider\/ $d=2^n-k$. Then\/ $A(d)\leq 2^n$. 
\end{lem}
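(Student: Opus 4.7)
The plan is to argue by contradiction: assuming a monochromatic arithmetic progression of length $L = 2^n + 1$ with difference $d = 2^n - k$, I would extract a forced palindrome condition on a short prefix of the Thue--Morse word $v$ and then rule it out using Lemma~\ref{lem:rec}. Writing the $i$-th progression element as $p_i = 2^n s_i + r_i$ with $r_i \in \{0,1,\dots,2^n-1\}$, one has $r_{i+1} \equiv r_i - k \pmod{2^n}$, with the superword index $s_i$ incrementing exactly when $r_i \geq k$. Because $k$ is odd, $\gcd(k,2^n) = 1$, so $r_0, r_1, \dots, r_{2^n-1}$ form a permutation of $\{0,1,\dots,2^n-1\}$, and precisely $k$ of them lie in $\{0,1,\dots,k-1\}$, exhausting this set. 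Whenever $r_i < k$, both $p_i$ and $p_{i+1}$ fall in the same level-$n$ superword $\theta^n(b_{s_i})$, and monochromaticity forces $\theta^n(b_{s_i})_{r_i} = \theta^n(b_{s_i})_{r_i + d}$. Using $\theta^n(0)_m = v_m$ and $\theta^n(1)_m = \bar{v_m}$, this reduces to $v_{r_i} = v_{r_i + d}$ independently of $b_{s_i}$, hence
\[
   v_j \, = \, v_{j + d} \quad \text{for every } j \in \{0, 1, \ldots, k-1\}.
\]

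Next I would invoke the reflection symmetry of $\theta^n(0)$. For $n$ odd, antisymmetry gives $v_{j+d} = \bar{v_{k-1-j}}$, and the choice $j = (k-1)/2$ yields $v_{(k-1)/2} = \bar{v_{(k-1)/2}}$, an immediate contradiction (this subcase already recovers Lemma~\ref{lem:TM-bound-odd}). For $n$ even, symmetry gives $v_{j+d} = v_{k-1-j}$, so the constraint becomes precisely that the length-$k$ prefix $v_{[0,k)}$ is a palindrome.

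The main obstacle is to rule out this palindrome condition for all odd $k \geq 3$. For $k = 3$ the prefix $v_{[0,3)} = 011$ is visibly not a palindrome. For $k \geq 5$, if $v_{[0,k)}$ were a palindrome then, since the first four letters $0110$ already form a palindrome, the last four letters $v_{k-4}\, v_{k-3}\, v_{k-2}\, v_{k-1}$ would also equal $0110$; equivalently, $\theta^2(0) = 0110$ would occur in $v$ starting at position $k-4$. But Lemma~\ref{lem:rec} with $n = 2$ and $a = 0$ shows that every occurrence of $0110$ in $v$ is at an even starting position --- either as a level-$2$ superword (at a position divisible by $4$) or at the centre of $\theta^2(1)\,\theta^2(1) = 10011001$ (at a position $\equiv 2 \pmod{4}$). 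Since $k$ is odd, $k-4$ is odd, a contradiction.
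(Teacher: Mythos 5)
Your proposal is correct. Its first half coincides with the paper's argument: since $d$ is odd, the progression residues exhaust all positions modulo $2^n$, each $j\in\{0,\dots,k-1\}$ gives two progression elements inside one level-$n$ superword, and by the bar-swap symmetry this forces $\theta^n(a)_j=\theta^n(a)_{j+d}$ for all $j<k$, i.e.\ $v_j=v_{j+d}$. Where you diverge is in how this condition is refuted. The paper reads it as ``the length-$k$ prefix of $\theta^n(a)$ equals its length-$k$ suffix'', notes that this prefix contains the repeated pair $\bar a\bar a$ (as $k\geq 3$), which pins down the level-$1$ superword boundaries, and then gets a parity clash because the suffix occurrence starts at the odd position $2^n-k$ while level-$1$ blocks are aligned to even positions; this is uniform in $n$ and needs only level-$1$ recognisability. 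You instead invoke the reflection (anti)symmetry of $\theta^n(a)$: for odd $n$ the centre position $j=(k-1)/2$ gives an immediate contradiction (recovering Lemma~\ref{lem:TM-bound-odd} for $k\geq 3$), and for even $n$ the condition becomes ``$v_{[0,k)}$ is a palindrome'', which you exclude for $k=3$ by inspection and for odd $k\geq 5$ by forcing an occurrence of $\theta^2(0)=0110$ at the odd position $k-4$, contradicting Lemma~\ref{lem:rec} at level $2$ (all occurrences of $0110$ start at even positions). Both routes ultimately rest on a recognisability/parity obstruction; yours costs a case split on the parity of $n$ and uses the symmetry of the superwords plus level-$2$ recognisability, but in exchange it isolates the clean intermediate statement that no odd-length prefix of length $\geq 3$ of the Thue--Morse word is a palindrome, and it handles both parities of $n$ in one lemma, whereas the paper's level-$1$ boundary argument is shorter and parity-free.
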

\begin{proof}
Assume there exists an arithmetic progression with difference $d$ of length $L>2^n$. Since 
$d$ is odd and hence coprime with $2^n$, looking at elements of the progression within superwords of length $2^n$ will meet every position in a superword. There are precisely $k$ instances where two elements of the progression appear within the same the superwords of length $2^n$, and hence the corresponding letters within the superwords have to agree. Since $\theta^n(\bar{a})=\bar{\theta^n(a)}$, both superwords have to agree on all these positions. As a consequence, for $a\in\{0,1\}$ and the superword $\theta^n(a)$, the word consisting of its first $k$ letters
\[
  w \defeq \theta^n(a)_{[0,k)}
\]
also has to appear at the end of the superword, so $w=\theta^n(a)_{[2^n-k,2^n)}$. Since $k\geq 3$ and $\theta^n(a)$ starts with $a\bar{a}\bar{a}$, the word $w$ always contains a repeated letter and hence the level-$1$ superwords of length $2$ are uniquely determined. This results in a contradiction because the length of $w$ is odd, and the superword $\theta^n(a)$ thus cannot end in $w$, since the level-$1$ superword boundaries do not match. Hence $A(d)\leq 2^n$.
\end{proof}

Note that, in contrast to Lemma~\ref{lem:TM-bound-odd}, the result of Lemma~\ref{lem:TM-bound-gen} does not extend to the case $k=1$, since in this case there is only one instance of a word containing two elements of the arithmetic progression, and for even $n$ the superwords $\theta^n(a)$ start and end on $a$, so this can (and does) appear in a long arithmetic progression.

\section{Generalised Thue--Morse words}\label{sec:gentm}

Consider the generalised Thue--Morse substitution rules $\theta_{p,q}$ for $p,q\in\Np$ defined by \cite{BGG}
\begin{equation}\label{eq:tpq}
   \theta_{p,q}\colon\quad \begin{array}{l} 0\mapsto 0^p1^q\\ 
   1\mapsto 1^p0^q\end{array}
\end{equation}
where the original Thue--Morse substitution corresponds to $p=q=1$. These binary bijective substitutions share many properties with the Thue--Morse substitution. In particular, we still have the `bar-swap' symmetry $\theta^n(\bar{a})=\bar{\theta^n(a)}$. This implies that, once again, superwords are uniquely determined as soon as you know a single of its letters. Note that, however, the symmetry of superwords is only preserved when $p=q$, with superwords for even $n$ being symmetric while those for odd $n$ being antisymmetric under reflection. The other main change is that, rather than working modulo $2$, we now have to work modulo $Q\defeq p+q$. Also, it is clear from the substitution rule \eqref{eq:tpq} that the language of $\theta_{p,q}$ is $(Q{+}1)$-powerfree (in fact, $(Q+\varepsilon)$-powerfree for any $\varepsilon>0$), generalising the cube-freeness (overlap-freeness) of the Thue--Morse case.

We note that Parshina also considered generalised Thue--Morse words \cite{Olga2}, but in her work the generalisation is to larger alphabets. Here, we consider a generalisation of the Thue--Morse sequence along the lines of \cite{K,BG10,BGG}, restricting ourselves to the binary case.

Since the rule $\theta^2_{p,p}$ is symmetric under reflection, the corresponding language is reflection symmetric too. However, if $p\neq q$, reflection swaps the languages defined by
$\theta_{p,q}$ and $\theta_{q,p}$. As we shall now show, each of these languages itself is not reflection symmetric.

\begin{lem}\label{lem:nonsym}
For\/ $p\neq q$, the languages\/ $\mathcal{L}_{p,q}$ and $\mathcal{L}_{q,p}$ defined by the substitutions\/ $\theta_{p,q}$ and\/ $\theta_{q,p}$, respectively, are different, so\/ $\mathcal{L}_{p,q}\neq \mathcal{L}_{q,p}$. In particular, for\/ $a\in\{0,1\}$, the words\/ $\bar{a}\,a^p\,\bar{a}^{q+1}$ belong to $\mathcal{L}_{p,q}$ but not to\/ $\mathcal{L}_{q,p}$.
\end{lem}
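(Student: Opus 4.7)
The plan is to verify the two inclusions separately: first that $\bar{a}\,a^p\,\bar{a}^{q+1}\in\mathcal{L}_{p,q}$, and then that $\bar{a}\,a^p\,\bar{a}^{q+1}\notin\mathcal{L}_{q,p}$. The bar-swap symmetry of both substitutions then handles both choices of $a\in\{0,1\}$ from a single one.

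For the membership in $\mathcal{L}_{p,q}$, I would start by observing that $\bar{a}\,a$ is a factor of the fixed point of $\theta_{p,q}$ (it already appears inside $\theta_{p,q}^2(a)=(a^p\bar{a}^q)^p(\bar{a}^p a^q)^q$ at the central transition). Hence $\theta_{p,q}(\bar{a}\,a)=\bar{a}^p a^{p+q}\bar{a}^q$ is also a factor, and since $p+q\geq 2$ it contains $a\,a\,\bar{a}$ as a subword. Applying $\theta_{p,q}$ once more gives the factor
\[
   \theta_{p,q}(a\,a\,\bar{a})\;=\;a^p\bar{a}^q\,a^p\bar{a}^q\,\bar{a}^p a^q\;=\;a^p\bar{a}^q\,a^p\,\bar{a}^{p+q}\,a^q.
\]
Reading from the final $\bar{a}$ of the middle $\bar{a}^q$-block, through the subsequent $a^p$, and then taking the first $q+1$ letters of the following $\bar{a}^{p+q}$-block (valid because $p\geq 1$), one recovers exactly $\bar{a}\,a^p\,\bar{a}^{q+1}$.

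For the non-membership in $\mathcal{L}_{q,p}$, where $\theta_{q,p}(a)=a^q\bar{a}^p$ and $\theta_{q,p}(\bar{a})=\bar{a}^q a^p$, I would analyse the possible maximal $a$-runs in the fixed point. Using its canonical decomposition into level-$1$ image blocks, a direct check of the four ways of chaining two consecutive blocks shows that any maximal $a$-run has length either $q$ (an unmerged $a^q$-prefix of $\theta_{q,p}(a)$), or $p$ (an unmerged $a^p$-suffix of $\theta_{q,p}(\bar{a})$), or $p+q$ (from a $\theta_{q,p}(\bar{a})\theta_{q,p}(a)$ transition where $a^p$ merges with the following $a^q$). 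Since $p\neq q$, a maximal $a$-run of length exactly $p$ must arise in the second way: as the $a^p$-suffix of some $\theta_{q,p}(\bar{a})$-block whose successor is again a $\theta_{q,p}(\bar{a})$-block. The $\bar{a}$-letters immediately following this $a^p$ then form precisely the $\bar{a}^q$-prefix of the next $\theta_{q,p}(\bar{a})=\bar{a}^q a^p$, which is terminated at once by its $a^p$-suffix. Hence the $\bar{a}$-run after any maximal $a^p$ has length exactly $q$, not $q+1$, and $\bar{a}\,a^p\,\bar{a}^{q+1}$ cannot occur in $\mathcal{L}_{q,p}$.

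The main delicate point is the careful enumeration of maximal $a$-runs in the second step: I would have to be sure that no other configuration of image blocks can place a maximal $a$-run of length exactly $p$ followed by at least $q+1$ $\bar{a}$'s. This reduces to a routine case analysis over a small number of two- and three-block patterns once the canonical block decomposition of the constant-length substitution is invoked.
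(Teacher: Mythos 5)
Your proposal is correct and follows essentially the same route as the paper: the membership part is the same computation with $\theta_{p,q}(aa\bar{a})$ (you just add the easy justification that $aa\bar{a}$ is legal, which the paper leaves to the reader), and your non-membership argument is the paper's argument read in the mirror --- the paper reflects the word and excludes $\bar{a}^{q+1}a^{p}\bar{a}$ from $\mathcal{L}_{p,q}$, while you work directly in $\mathcal{L}_{q,p}$, but both hinge on the same level-$1$ block decomposition forcing a maximal $a$-run of length exactly $p$ to sit against a $\bar{a}$-run of length exactly $q$. The only cosmetic slip is calling the location of $\bar{a}a$ in $\theta_{p,q}^{2}(a)$ the ``central transition'' (that junction is actually $\bar{a}\bar{a}$); the factor $\bar{a}a$ is nevertheless present, e.g.\ inside $\theta_{p,q}(\bar{a})=\bar{a}^{p}a^{q}$, so nothing is affected.
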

\begin{proof}
It is easy to verify that $aa\bar{a}\in\mathcal{L}_{p,q}$ for any $p,q\in\Np$. Now,
\[
   \theta_{p,q}(aa\bar{a}) = a^p\,\bar{a}^q\,a^p\,\bar{a}^q\,\bar{a}^p\,a^q = a^p\,\bar{a}^q\,a^p\,\bar{a}^{p+q}\,a^q ,
\]
so $\bar{a}\,a^p\,\bar{a}^{q+1}\in\mathcal{L}_{p,q}$ for all $p\ne q$.
By reflection, $\bar{a}\,a^p\,\bar{a}^{q+1}\not\in\mathcal{L}_{q,p}$ is equivalent to  $\bar{a}^{q+1}\,a^p\,\bar{a}\not\in\mathcal{L}_{p,q}$, which we are going to show now.

Noting that $p\ne q$ and that $\mathcal{L}_{p,q}$ can only contain strings of the type $\bar{a}a^m\bar{a}$ for $m\in\{p,q,p{+}q\}$, it follows by recognisability that $a^p\bar{a}$ in $\bar{a}^{q+1}a^p\bar{a}$ has to be the start of the level-$1$ superword $\theta_{p,q}(a)$. However, it then has to be preceded by the level-$1$ superword that ends in $\bar{a}$, which is again $\theta_{p,q}(a)=a^p\bar{a}^q$. This is clearly impossible, establishing the claim.  
\end{proof}

\begin{rmk}\label{rem:nonsym}
Similarly, considering the word $a\bar{a}\bar{a}\in\mathcal{L}_{p,q}$, with
\[
\theta_{p,q}(a\bar{a}\bar{a}) = a^p\, \bar{a}^q\, \bar{a}^p\, a^q \,\bar{a}^p \,a^q
= a^p\, \bar{a}^{p+q}\, a^q\, \bar{a}^p\, a^q,
\]
we can show that $\bar{a}^{p+1}\,a^q\,\bar{a}\in\mathcal{L}_{p,q}$, but is not a word
in $\mathcal{L}_{q,p}$ for $p\ne q$.
\end{rmk}

Since all substitutions are binary bijective, it follows from \cite{BGG} that they are not pure point diffractive, which implies that they do not have pure point dynamical spectrum either. Hence, by Theorem~{Yasushi2}, they cannot contain infinitely long arithmetic progressions for any finite difference $d$. This means that we can again define the maximum length of an arithmetic progression.

\begin{defn}
For a positive integer $d$, let $A_{p,q}(d)$ denote the maximum length of a monochromatic arithmetic progression of difference $d$ within the generalised Thue--Morse word.
\end{defn}

As a direct consequence of the substitution structure and recognisability, we know that
$A_{p,q}(Q^nd)=A_{p,q}(d)$ holds for all $n\in\N$. In particular, this implies that $A_{p,q}(Q^n)=A_{p,q}(1)=Q$. We can again find long arithmetic progressions by considering a block substitution.

\begin{defn}\label{def:RTM-Block-gen}
Let $\Theta_{p,q}$ be the block substitution on the alphabet $\{0,1\}$ defined by 
\[
    \Theta_{p,q}\colon\quad 
     0\,\longmapsto\begin{array}{l}\left.\begin{array}{l@{}}0^p1^q\\[-2pt]
     0^p1^q\\[-4pt]
     \vdots\\
     0^p1^q\end{array}\right\} p \\[-2pt]
     \left.\begin{array}{l@{}}1^p0^q\\[-2pt]
     1^p0^q\\[-4pt]
     \vdots\\
     1^p0^q\end{array}\right\} q\end{array},
     \quad
      1\,\longmapsto\begin{array}{l}\left.\begin{array}{l@{}}1^p0^q\\[-2pt]
     1^p0^q\\[-4pt]
     \vdots\\
     1^p0^q\end{array}\right\} p \\[-2pt]
     \left.\begin{array}{l@{}}0^p1^q\\[-2pt]
     0^p1^q\\[-4pt]
     \vdots\\
     0^p1^q\end{array}\right\} q\end{array}.
\]
\end{defn}

The block substitution $\Theta_{p,q}$ maps a single letter $a$ to a $Q\times Q$ block of letters, which, when read line by line, coincides with the word $\theta^2_{p,q}(a)$. To illustrate the properties of $\Theta_{p,q}$, let us consider a couple of examples.

\begin{xmpl}\label{xmp:22-21}
We first consider and example where $p=q$, namely $\Theta_{2,2}$.
The first two substitution steps of the letter $0$ are as follows,

\[
\centerline{\begin{tikzpicture}[x=15pt,y=15pt]
\z{0,0} 
\node at (1.75,0) {$\longmapsto$};
\begin{scope}[shift={(3.5,-1.5)}]
\foreach \y in {2,3} {\z{0,\y}\z{1,\y}\on{2,\y}\on{3,\y}};
\foreach \y in {0,1} {\on{0,\y}\on{1,\y}\z{2,\y}\z{3,\y}};
\end{scope}
\node at (8.25,0) {$\longmapsto$};
\begin{scope}[shift={(10.0,-7.5)}]
\foreach \y in {0,1,4,5,10,11,14,15} {\z{0,\y}\z{1,\y}\on{2,\y}\on{3,\y}\z{4,\y}\z{5,\y}\on{6,\y}\on{7,\y}
\on{8,\y}\on{9,\y}\z{10,\y}\z{11,\y}\on{12,\y}\on{13,\y}\z{14,\y}\z{15,\y}};
\foreach \y in {2,3,6,7,8,9,12,13}
{\on{0,\y}\on{1,\y}\z{2,\y}\z{3,\y}\on{4,\y}\on{5,\y}\z{6,\y}\z{7,\y}
\z{8,\y}\z{9,\y}\on{10,\y}\on{11,\y}\z{12,\y}\z{13,\y}\on{14,\y}\on{15,\y}};
\end{scope}
\end{tikzpicture}}
\]
which is a similar structure as for the original Thue--Morse case. In particular, all squares along the diagonals are of the same colour.

The situation is different for $p\ne q$. Here, we consider the block substitution $\Theta_{2,1}$ as an example, which acts on a letter $0$ as
\[
\centerline{\begin{tikzpicture}[x=15pt,y=15pt]
\z{0,0} 
\node at (1.75,0) {$\longmapsto$};
\begin{scope}[shift={(3.5,-1.0)}]
\foreach \y in {1,2} {\z{0,\y}\z{1,\y}\on{2,\y}};
\foreach \y in {0} {\on{0,\y}\on{1,\y}\z{2,\y}};
\end{scope}
\node at (7.25,0) {$\longmapsto$};
\begin{scope}[shift={(9.0,-3.5)}]
\foreach \y in {0,4,5,7,8} {\z{0,\y}\z{1,\y}\on{2,\y}\z{3,\y}\z{4,\y}\on{5,\y}\on{6,\y}\on{7,\y}\z{8,\y}};
\foreach \y in {1,2,3,6}
{\on{0,\y}\on{1,\y}\z{2,\y}\on{3,\y}\on{4,\y}\z{5,\y}\z{6,\y}\z{7,\y}\on{8,\y}};
\end{scope}
\node at (18.75,0) {$\longmapsto$};
\node at (20.5,0) {$\dots$};
\end{tikzpicture}}
\]
Note that, while we retain the same letter along the main diagonal, this is no longer the case along the diagonal from the lower left to the top right. Considering the second inflation step shown above, it appears that there is a long monochromatic progression of difference $3^2-1=8$, starting from the central black square on the top row and moving down diagonally, and then continuing on from the final black square on the middle row. Indeed, we find that for $d=8$ the longest arithmetic progression has length $12$; however, this pattern does not persist for further inflation steps.
\end{xmpl}

As for the Thue--Morse case, the image of a letter $a$ has all entries $a$ along the main diagonal of this block. However, as illustrated in Example~\ref{xmp:22-21}, in general this is no longer the case for the other diagonal, except for the case that $p=q$, in which case the entries of this diagonal are all $\bar{a}$. This means that we obtain the existence of long arithmetic progressions, as in the Thue--Morse case, for $d=Q^n+1$ for all values of $p$ and $q$, while long arithmetic progressions for $d=Q^n-1$ may only exist if $p=q$. 

Noting that Lemma~\ref{lem:TM-Block} and Lemma~\ref{lemma:TM-prop} generalise in a straightforward manner, we obtain the following existence result for long arithmetic progressions in generalised Thue--Morse words, generalising the result of Lemma~\ref{lem:ineq}.

\begin{lem}\label{lem:ineq-gen}
For all\/ $n,p,q\in\Np$, $Q=p+q$, we have that\/ $A_{p,q}(Q^n{+}1)\geq Q^n$ and\/ $A_{p,p}(Q^n{-}1)\geq Q^n$. If\/ $n$ is even, we further have\/ $A_{p,p}(Q^n{-}1)\geq Q^n+2$.
\end{lem}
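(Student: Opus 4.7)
The plan is to follow the proof of Lemma~\ref{lem:ineq} closely, using the straightforward analogues of Lemmas~\ref{lem:TM-Block} and~\ref{lemma:TM-prop} for the block substitution $\Theta_{p,q}$ that the paper has flagged as routine. First I would note that $\Theta_{p,q}^n(a)$ is a $Q^n\times Q^n$ block which, read row by row from the top, coincides with the linear word $\theta_{p,q}^{2n}(a)$; this is immediate by induction on $n$, exactly as in Lemma~\ref{lem:TM-Block}. Consequently, the cell in row $r$ and column $c$, with $0\leq r,c<Q^n$, sits at position $rQ^n+c$ of $\theta_{p,q}^{2n}(a)$.

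For the bound $A_{p,q}(Q^n{+}1)\geq Q^n$, I would look at the main diagonal. Its cells $(r,r)$ for $0\leq r<Q^n$ occupy positions $r(Q^n+1)$, which form an arithmetic progression of difference $Q^n+1$ and length $Q^n$. The definition of $\Theta_{p,q}$ shows by a short induction that every main-diagonal entry of $\Theta_{p,q}^n(a)$ equals $a$: the cell $(r,r)$ lies in the $(r',r')$-th $Q\times Q$ sub-block with content $\Theta_{p,q}(b)$, where $b$ is the diagonal entry of $\Theta_{p,q}^{n-1}(a)$ (which is $a$ by induction), and in $\Theta_{p,q}(a)$ every diagonal position is $a$ by direct inspection.

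For the bound $A_{p,p}(Q^n{-}1)\geq Q^n$, I would look at the anti-diagonal instead. Its cells $(r,Q^n{-}1{-}r)$ occupy positions $(r+1)(Q^n-1)$, giving a progression of difference $Q^n-1$ and length $Q^n$. The analogue of Lemma~\ref{lemma:TM-prop} for $\Theta_{p,p}^n$---proved by induction and exploiting the extra reflection symmetry available when $p=q$---states that this anti-diagonal is monochromatic, with all entries equal to $\bar{a}$ for odd $n$ and to $a$ for even $n$. Monochromaticity alone already yields the claimed lower bound.

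To strengthen the bound to $Q^n+2$ when $n$ is even, I would extend the anti-diagonal progression by one step on each side. The preceding term sits at position $0$ and the following term at position $Q^{2n}-1$, i.e.\ at the first and last letters of $\theta_{p,p}^{2n}(a)$. A short induction using $\theta_{p,p}(a)=a^p\bar{a}^p$ shows that $\theta_{p,p}^k(a)$ starts with $a$ for every $k$ and ends with $a$ whenever $k$ is even, so both endpoints are $a$, matching the anti-diagonal and giving a monochromatic progression of length $Q^n+2$. The only genuinely new point compared with Lemma~\ref{lem:ineq} is the inductive identification of the anti-diagonal letter as a function of the parity of $n$, and this is the step I would check most carefully.
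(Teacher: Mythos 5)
Your proposal is correct and follows exactly the route the paper intends: the paper's proof of this lemma is just a one-line reference back to Lemma~\ref{lem:ineq}, i.e.\ the main diagonal of $\Theta_{p,q}^n(a)$ for difference $Q^n+1$, the anti-diagonal of $\Theta_{p,p}^n(a)$ for difference $Q^n-1$, and the extension to the first and last letters of $\theta_{p,p}^{2n}(a)$ when $n$ is even. You have simply made explicit the inductive verifications (constancy of the anti-diagonal only when $p=q$, and its letter alternating with the parity of $n$) that the paper leaves implicit.
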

\begin{proof}
This follows by the same line of argument as for the Thue--Morse case in the proof of Lemma~\ref{lem:ineq}.  
\end{proof}

The following results implicitly use the fact that, as in the Thue--Morse case, a level-$n$ superword of $\theta_{p,q}$ within any word in $\mathcal{L}_{p,q}$ only occurs in certain ways. We obtain the following generalisation of Lemma~\ref{lem:rec}.

\begin{lem}
The word\/ $w=\theta_{p,q}^{n}(a)$ with\/ $a\in\{0,1\}$ and\/ $n>1$ occurs inside sufficiently long words in\/ $\mathcal{L}_{p,q}$ either as the level-$n$
superword itself, or, in the case when\/ $p=q$, in the centre of two level-$n$ superwords\/ $\theta_{p,p}^{n}(\bar{a})\theta_{p,p}^{n}(\bar{a})$. 

In the latter case\/ $p=q$, if $w$ is followed by the letter\/ $\bar{a}$ or preceded by the letter\/ $a$ (for\/ $n$ odd) or\/ $\bar{a}$ (for\/ $n$ even), it is the level-$n$ superword.
\end{lem}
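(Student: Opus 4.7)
The plan is to follow the Thue--Morse argument of Lemma~\ref{lem:rec} adapted to the $Q=p+q$ setting. Writing $w=\theta_{p,q}^n(a)=\theta_{p,q}^{n-1}(a)^p\,\theta_{p,q}^{n-1}(\bar{a})^q$, the word $w$ is itself a concatenation of $Q$ consecutive level-$(n-1)$ superwords. By recognisability of $\theta_{p,q}$, any occurrence of $w$ in a sufficiently long word of $\mathcal{L}_{p,q}$ must respect the level-$(n-1)$ superword boundaries of the ambient word. Consequently, either $w$ coincides with a level-$n$ superword, or it straddles the boundary between two adjacent level-$n$ superwords $S=\theta_{p,q}^n(b)$ and $S'=\theta_{p,q}^n(c)$, occupying the last $m$ level-$(n-1)$ blocks of $S$ and the first $Q-m$ of $S'$ for some $1\le m\le Q-1$.

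To rule out the straddling case (except when $p=q$), I would compare the sequences of level-$(n-1)$ block labels position by position. Using $\theta_{p,q}^n(x)=\theta_{p,q}^{n-1}(x)^p\,\theta_{p,q}^{n-1}(\bar{x})^q$, a short case analysis on whether $m$ lies below or above $p$ and $q$ yields, on the left-hand side, the constraints $m\le\min(p,q)$ and $b=\bar{a}$, and on the right-hand side, the constraints $Q-m\le\min(p,q)$, i.e., $m\ge\max(p,q)$, together with $c=\bar{a}$. These two can hold simultaneously only when $p=q$ and $m=p=q$, in which case the two flanking level-$n$ superwords coincide and equal $\theta_{p,p}^n(\bar{a})$, giving the first part of the lemma.

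For the second statement, assume $p=q$ and consider the straddling configuration. Using $\theta_{p,p}^n(\bar{a})=\theta_{p,p}^{n-1}(\bar{a})^p\,\theta_{p,p}^{n-1}(a)^p$, the letter immediately following the straddling $w$ is the first letter of $\theta_{p,p}^{n-1}(a)$, which is $a$, and the letter immediately preceding $w$ is the last letter of $\theta_{p,p}^{n-1}(\bar{a})$. A simple induction on $k$, starting from $\theta_{p,p}(\bar{a})=\bar{a}^p a^p$ and using $\theta_{p,p}^{k+1}(\bar{a})=\theta_{p,p}(\theta_{p,p}^k(\bar{a}))$, shows that this last letter is $\bar{a}$ when $k$ is even and $a$ when $k$ is odd; specialising to $k=n-1$ yields $\bar{a}$ for $n$ odd and $a$ for $n$ even. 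Hence each of the three listed conditions is incompatible with the straddling configuration, forcing $w$ to coincide with the level-$n$ superword.

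The main obstacle is the bookkeeping in the case analysis that produces the inequalities $m\le\min(p,q)$ and $m\ge\max(p,q)$; this is the step where the asymmetry between $p$ and $q$ genuinely enters and forces $p=q$ in the non-aligned case, and it requires keeping careful track of which positions in $S$, $S'$ and $w$ carry blocks of the form $\theta_{p,q}^{n-1}(a)$ versus $\theta_{p,q}^{n-1}(\bar{a})$. Once these inequalities are in hand, the rest of the argument mirrors the Thue--Morse case.
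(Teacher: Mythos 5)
Your proposal is correct and follows essentially the same route as the paper, which disposes of this lemma in one sentence by calling it a straightforward generalisation of Lemma~\ref{lem:rec}: you rely on the same recognisability-based alignment of the occurrence of $w$ with the ambient level-$(n-1)$ superword decomposition, and your case analysis on $m$ (yielding $m\le\min(p,q)$ from the left overlap and $m\ge\max(p,q)$ from the right, hence $p=q=m$) is exactly the detailed version of the paper's remark that for $p\ne q$ the level-$(n-1)$ superwords can only be assembled into a level-$n$ superword in one way. The boundary-letter computation for the second part is also correct and matches the paper's argument.
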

\begin{proof} 
The proof is a straightforward generalisation from that of Lemma~\ref{lem:rec}. The only difference is that, for $p\ne q$, the word $w$ can only occur as the level-$n$ superword, because the level-$(n-1)$ superwords are determined and with $p\ne q$ they can only be combined to the level-$n$ superword in one way.
\end{proof}

\begin{propn}
For all\/ $n,p,q\in\Np$, $Q=p+q$, $n>1$, we have that
\[
  A_{p,q}(Q^n{+}1)=\begin{cases}
  Q^n+Q-2, & \text{if\/ $p>1$ and\/ $q>1$,}\\
  Q^n+Q-1, & \text{if\/ $q>p=1$ or\/ $p>q=1$,}\\
  Q^n+Q, & \text{if\/ $p=q=1$.}\end{cases} 
\]
\end{propn}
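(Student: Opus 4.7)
The plan is to adapt the proof strategy of Proposition~\ref{TM-long-plus1}, with modifications to handle the run structure of $\theta_{p,q}^n(a)$, which begins with the run $a^p$ and ends with a run of length $q$. These two runs control how far the main-diagonal progression inside $\Theta_{p,q}^n(a)$ can be extended to the right and to the left, respectively.

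For the lower bound, I would extend the $Q^n$-element progression on the main diagonal of $\Theta_{p,q}^n(a)$ (Lemma~\ref{lem:ineq-gen}) along the offset-by-$\pm 1$ diagonals in adjacent level-$2n$ blocks. The entry at row $i$, column $i-1$ of $\Theta_{p,q}^n(c)$ equals $c$ precisely when $\theta_{p,q}^n(c)_{i-1}$ and $\theta_{p,q}^n(c)_i$ lie in the same run, so choosing the right-neighbour level-$2n$ block as $\theta_{p,q}^{2n}(a)$---realisable in $\mathcal{L}_{p,q}$ when $p\geq 2$---contributes $p-1$ further progression elements inside the initial $a^p$-run. When $p=1$, the right-neighbour block must instead be $\theta_{p,q}^{2n}(\bar{a})$ and contributes only one further element. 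A symmetric argument on the left, using the terminal length-$q$ run, contributes $q-1$ or $1$. The bordering configurations of three consecutive level-$2n$ superwords needed in each case all occur in $\mathcal{L}_{p,q}$, since this language contains runs of up to $Q$ letters at level-$1$ junctions. Summing gives the three claimed lower bounds.

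For the upper bound, suppose a progression of length $L$ strictly larger than the claimed value. Since $L>Q^n$, after shifting one element lies at position~$0$ of some level-$n$ superword, and the fixed-point identity $v=\theta_{p,q}^n(v)$ forces $v$ to contain a subword $\theta_{p,q}^n(a)$ at some base position~$s$. By the recognisability lemma stated just above, this subword either aligns with the level-$n$ decomposition of $v$ (so $s$ is a multiple of $Q^n$), or, when $p=q$, it sits in the centre of two level-$n$ superwords $\theta_{p,q}^n(\bar{a})\theta_{p,q}^n(\bar{a})$. In the aligned case, the forward extension requires a shifted copy of $\theta_{p,q}^n(a)$ to match the beginning of the next level-$n$ superword $\theta_{p,q}^n(c)$: for $c=a$ this forces consecutive positions of $\theta_{p,q}^n(a)$ to agree, bounding the extension by $p-1$; for $c=\bar{a}$ it forces them to alternate, which happens only when $p=1$ and only for a single step. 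The backward extension is bounded symmetrically by $q-1$ or $1$. The centre case for $p=q$ reduces, via $\theta_{p,q}^n(\bar{a})\theta_{p,q}^n(\bar{a})=\theta_{p,q}^{n-1}(\bar{a})^p\theta_{p,q}^{n-1}(a)^p\theta_{p,q}^{n-1}(\bar{a})^p\theta_{p,q}^{n-1}(a)^p$, to the same run-matching condition one level lower and yields the same bound. The main obstacle is the clean bookkeeping when $p$ or $q$ equals~$1$, and confirming that the centre-occurrence case for $p=q$ does not improve the bound.
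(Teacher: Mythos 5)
Your lower bound is essentially the paper's argument: extend the diagonal progression of Lemma~\ref{lem:ineq-gen} into the neighbouring blocks, observe that the offset diagonals carry the block's letter exactly along runs of the underlying word (initial run of length $p$, terminal run of length $q$), and check that the required bordering configurations occur in $\mathcal{L}_{p,q}$. That part is sound and matches the paper, which phrases the same count in terms of the level-$n$ superwords $u,\bar u$ adjacent to $w=\theta_{p,q}^{2n}(a)$.

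The upper bound, however, has a genuine gap. From $L>Q^n$ you only extract that some element of the progression sits at position $0$ of a level-$n$ superword, which forces that \emph{single} superword to be $\theta_{p,q}^n(a)$ --- and this occurrence is automatically aligned, so invoking the recognisability dichotomy (aligned versus centred) at this level does nothing. The object you actually need to locate and align is the full level-$2n$ word $\theta_{p,q}^{2n}(a)$, equivalently a complete copy of $\theta_{p,q}^n(a)$ in the sequence of level-$n$ superword types traced by the progression. Your subsequent step, ``the forward extension requires a shifted copy of $\theta_{p,q}^n(a)$ to match the beginning of the next level-$n$ superword $\theta_{p,q}^n(c)$, bounding it by $p-1$ (or $1$),'' is only meaningful \emph{after} such a complete traversal has been exhibited; read literally, as a bound on how far the progression can continue past an element at offset $0$ of one level-$n$ superword, it is false, since the progression can run on for up to $Q^n$ further steps along the diagonal of the next block. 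What is missing is the argument of Proposition~\ref{TM-long-plus1}, which the paper invokes here: once the progression hits the first position of a level-$n$ superword, bijectivity forces the types of the subsequent level-$n$ superwords to follow the letters of $\theta_{p,q}^n(a)$; if at least $Q^n$ terms remain this produces the whole word $w$, pinned down by the generalised recognisability lemma (for $p\neq q$ the occurrence is automatically the level-$2n$ superword, for $p=q$ one uses the second part of that lemma or, as you suggest, analyses the centred occurrence separately --- a computation you assert but do not carry out); if fewer than $Q^n$ terms remain, one instead works backwards from the element hitting the last position of a level-$n$ superword. Only then do your run-matching bounds of $p-1$, $q-1$ and $1$ at the two ends combine to give $L\leq Q^n+{}$(left extension)${}+{}$(right extension), which is the claimed value. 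Without this reduction the case analysis does not bound $L$ at all.
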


\begin{proof}
From the proof of Lemma~\ref{lem:ineq-gen}, we have an arithmetic progression of length $Q^n$ of the letter $a$ in the word 
\[
  w=\theta_{p,q}^{2n}(a)=\bigl(\theta_{p,q}^n(a)\bigr)^p\bigl(\theta_{p,q}^n(\bar{a})\bigr)^q\dots\,\bigl(\theta_{p,q}^n(\bar{a})\bigr)^p\bigl(\theta_{p,q}^n(a)\bigr)^q,
\]
with the first and final letter being part of the progression (as before, because we are looking at an even number of substitutions, the word $w$ starts and ends with the same letter). Note that, since the elements in the progression of difference $Q^n+1$ visit successive positions in superwords $\theta_{p,q}^n(a)$ in order, we know that, irrespective of where we start, once we hit the first letter of a superword $\theta_{p,q}^n(a)$ (which has to happen for any progression of length $Q^n$) the progression follows this same sequence, and the same backwards from when we hit the final position in a level-$n$ superword. Using the same argument as in the proof of Proposition~\ref{TM-long-plus1}, we conclude that any progression of length $L>Q^n$ has to include this superword.

Now consider how many letters can be added at either end of the progression of length $Q^n$ in the superword $w$. For $Q>2$, all four possibilities for this superword being bordered by level-$n$ superwords $u=\theta_{p,q}^{n}(a)$ or $\bar{u}=\theta_{p,q}^{n}(\bar{a})$ can occur, so we need to consider $w$ followed or preceded by either $u$ or $\bar{u}$.

If $w$ is followed by $u$, it is followed by $u^p$ and we can extend the arithmetic progression by exactly $p-1$ to the right. If it is followed by $\bar{u}$, we cannot extend at all unless $p=1$. For $p=1$, we can extend by exactly one step.

If $w$ is preceded by $u$ (for $n$ odd) or $\bar{u}$ (for $n$ even), we cannot extend at all unless $q=1$, in which case we can extend by precisely one step. If it is preceded by $\bar{u}$ (for $n$ odd) or $u$ (for $n$ even), we can extend by exactly $q-1$ steps to the left.

Choosing the combination with the longest available progression yields the result. 
\end{proof}

Note that for $p=q=1$ we recover the result of Proposition~\ref{TM-long-plus1}.

\begin{propn}
For all\/ $n,p\in\Np$, $Q=2p$, $n>1$, we have that
\[A_{p,p}(Q^n{-}1)=\begin{cases}
  Q^n, & \text{if\/ $n$ is odd}\\
  Q^n+Q, & \text{if\/ $n$ is even and $p>1$}\\
  Q^n+Q+2, & \text{if\/ $n$ is even and\/ $p=1$.}
\end{cases} 
\]
\end{propn}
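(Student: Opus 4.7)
The plan is to mirror the strategy of Proposition~\ref{TM-long-minus1}, splitting into the cases $n$ odd and $n$ even, handling the lower bounds by explicit construction, and tackling the upper bound through a careful wrap analysis in the level-$n$ super-sequence.

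For $n$ odd, the superword $\theta^n_{p,p}(a)$ is antisymmetric, so its first and last letters differ. Since $d = Q^n - 1$ is coprime to $Q^n$, the AP cycles through the offsets within successive superwords, and any AP of length greater than $Q^n$ must contain a \emph{wrap}---a transition from offset $0$ to offset $Q^n-1$ inside a single superword---which forces the first and last letters of that superword to agree in colour. Antisymmetry rules this out, so $A_{p,p}(Q^n-1) \leq Q^n$, matching the lower bound of Lemma~\ref{lem:ineq-gen}. For even $n$ the lower bound starts from the length-$(Q^n+2)$ progression of Lemma~\ref{lem:ineq-gen}, namely the anti-diagonal of $\Theta^n_{p,p}(a)$ together with the top-left and bottom-right corners of that block, and extends it into the two adjacent level-$2n$ superwords. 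For $p \geq 2$ the neighbours should be of type $a$ (compatible with $\mathcal{L}_{p,p}$, since the pattern $aaa$ occurs in the language when $p\ge 2$); then the first $p-1$ positions past each end of $\theta^{2n}_{p,p}(a)$ carry colour $a$, because $\theta^n_{p,p}(a)$ starts (and, by even-$n$ symmetry, ends) with $a^p$, giving $Q^n + Q$. For $p=1$ the neighbour must instead be of type $\bar a$ (since $\theta^n(\bar a)[1] = a$), which allows exactly one extra step on each side and hence reaches $Q^n + Q + 2 = Q^n + 4$.

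For the upper bound with $n$ even, any AP of length $L > Q^n$ must contain at least one wrap, and each wrap sits in a superword of type $a$ (the $p = q$ and even-$n$ hypotheses ensure both positions $0$ and $Q^n - 1$ carry colour $a$). In the level-$n$ super-sequence $W$---which by self-similarity is again a Thue--Morse-like word in $\mathcal{L}_{p,p}$---consecutive wraps lie exactly $Q^n - 1$ positions apart, and the super-sequence between any two consecutive wraps realises $\theta^n_{p,p}(a)$ as a subword of length $Q^n$. The generalisation of Lemma~\ref{lem:rec} pins each such occurrence either to an aligned level-$n$ boundary ($\equiv 0 \pmod{Q^n}$) or to the centre of $\theta^n_{p,p}(\bar a)\theta^n_{p,p}(\bar a)$ ($\equiv Q^n/2 \pmod{Q^n}$); a short modular check then rules out three or more wraps, because two occurrences displaced by $Q^n-1$ cannot both lie in $\{0,\,Q^n/2\} \pmod{Q^n}$ once $Q^n \geq 4$.

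With at most two wraps in hand, the AP decomposes into a central cycle together with two tails extending into the neighbouring level-$n$ superwords of $W$. Using the bar-swap symmetry and the palindromic symmetry of $\theta^n_{p,p}(a)$ for even $n$, the tail conditions become pointwise: for a neighbour of type $a$ the tail runs through the leading $a^p$-block of $\theta^n_{p,p}(a)$, admitting $p-1$ extra steps; for a neighbour of type $\bar a$ the tail demands consecutive letters of $\theta^n_{p,p}(a)$ to differ, which already fails at position $1$ when $p \geq 2$ but allows exactly one step when $p=1$. Combining the optimal left and right tails produces $L \leq Q^n + Q$ for $p \geq 2$ and $L \leq Q^n + Q + 2$ for $p = 1$. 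The main obstacle I expect is the bookkeeping for the \emph{middle} positioning of the central cycle: the tail constraints become harder to state directly, and one must invoke the specific behaviour of $\theta^n_{p,p}(a)$ at offset $Q^n/2$---the transition between the leading $a^p$- and $\bar a^p$-runs---to confirm that this alternative never beats the aligned bound.
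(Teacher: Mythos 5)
Your proposal is correct and follows essentially the same route as the paper's: lower bounds from Lemma~\ref{lem:ineq-gen} together with an extension of the canonical progression in $\theta_{p,p}^{2n}(a)$ into its neighbours, the odd case excluded by antisymmetry, and the even case yielding $p-1$ extra steps per side for $p\geq 2$ (one step for $p=1$). Your upper-bound bookkeeping --- counting wraps, forcing the super-sequence between consecutive wraps to be $\theta^n_{p,p}(a)$ via bijectivity, and excluding a third wrap through the generalisation of Lemma~\ref{lem:rec} --- is in fact more explicit than the paper's own argument, which essentially only analyses extensions of the canonical progression inside $\theta_{p,p}^{2n}(a)$. The step you flag as the main obstacle, the centred occurrence, closes exactly as you anticipate: there the left context of the occurrence ends with the tail of $\theta^{n-1}_{p,p}(\bar a)$, namely $\bar a^p a^p$ (note $n-1$ is odd), and the right context begins with $\theta^{n-1}_{p,p}(a)=a^p\bar a^p\cdots$, so both tails hit the same mismatch at the $p$-th step (at the first step when $p=1$), capping them at $p-1$ (respectively $0$); hence the centred position never beats the aligned bound. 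Two small points to tidy: the extension elements do not occupy ``the first $p-1$ positions past each end'' of $\theta^{2n}_{p,p}(a)$ --- they sit at offsets $1,2,\dots$ (on the left) and $Q^n-2,Q^n-3,\dots$ (on the right) of successive level-$n$ superwords, which is precisely why the relevant fact is that $\theta^n_{p,p}(a)$ begins and ends with $a^p$ for even $n$; and a progression of length greater than $Q^n$ may contain only one wrap (if it starts or ends at a suitable offset), so the ``central cycle plus two tails'' decomposition should be phrased so that the single-wrap configurations are covered by the same forcing argument --- they are, since the long run of forced offsets on one side of the wrap determines the same superword sequence.
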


\begin{proof}
From Lemma~\ref{lem:ineq-gen}, we already know that long arithmetic progressions for $d=Q^n-1$ exist, with $A(d)\geq Q^n$, within the superword 
\[
w=\theta_{p,p}^{2n}(a)=\bigl(\theta_{p,p}^n(a)\bigr)^p\bigl(\theta_{p,p}^n(\bar{a})\bigr)^p\dots\,\bigl(\theta_{p,p}^n(\bar{a})\bigr)^p\bigl(\theta_{p,p}^n(a)\bigr)^p.
\]
Accordingly, such a long progression visits every position in level-$n$ superwords.

For odd values of $n$, the superwords $\theta_{p,p}^n(b)$ start with $b$ and end on $\bar{b}$, so it is not possible to have the first and last letter in the same arithmetic progression. This implies that $A(d)\leq Q^n$, and hence $A(d)=Q^n$ in this case.

For even values of $n$, all superwords $\theta_{p,p}^n(b)$ start and end in the same letter, and hence we have $A(d)\geq Q^n+2$ as shown in Lemma~\ref{lem:ineq-gen}, with the first and last letter in the superword $w$ belonging to the arithmetic progression. What is left to consider is how far this can be extended on either side. The word $w$ can be preceded and succeeded by level-$n$ superwords $u=\theta_{p,p}^{n}(a)$ or $\bar{u}=\theta_{p,p}^n(\bar{a})$, where for $p=1$ one has to ensure cube-freeness.

If $w$ is succeeded by $u$ and hence by $u^p$, it can be extended by exactly $p-1$ steps. If is is succeeded by $\bar{u}$, no extension is possible, unless $p=1$ in which case you can extend by exactly one step. Due to symmetry of all these words for even $n$, the same argument applies at the other end, which completes the proof.
\end{proof}

\begin{propn}
For all\/ $n,p,q\in\Np$, with\/ $n>2$, $p\ne q$ and\/ $Q=p+q$, we have that $A_{p,q}(Q^n{-}1)\leq Q^n$.
\end{propn}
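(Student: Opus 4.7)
The plan is to suppose for contradiction that a progression of length $L \geq Q^n + 1$ exists, and to use the forbidden subwords provided by Lemma~\ref{lem:nonsym} to rule it out. Since $\gcd(Q^n - 1, Q^n) = 1$, the progression must \emph{wrap} at some level-$n$ superword $S$: two consecutive progression elements sit at positions $0$ and $Q^n - 1$ of $S$, both carrying the common colour $a$. This forces $S = \theta^n_{p,q}(a)$ with matching first and last letter. For odd $n$ the last letter of $\theta^n_{p,q}(a)$ is $\bar a$ (by a direct induction from $\theta_{p,q}(a) = a^p \bar a^q$), giving an immediate contradiction; so $n$ must be even, and from now on I take $n \geq 4$.

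Relabel so that the wrap lies between $e_0$ and $e_1$, with $j$ elements preceding and $L - 1 - j$ following. Writing $x_i$ for the $i$-th letter of $\theta^n_{p,q}(a)$ and $k_0$ for the index of $S$ in $v$, tracking each progression element through its superword and enforcing colour $a$ yields $v_{k_0 - k} = x_k$ for $0 \leq k \leq j$ and $v_{k_0 + m} = x_{Q^n - 1 - m}$ for $0 \leq m \leq L - 2 - j$. For $L = Q^n + 1$ these assemble into a single length-$Q^n$ factor of $v$,
\[
W_j \;=\; x_j \, x_{j-1} \, \cdots \, x_1 \, x_0 \, x_{Q^n-2} \, x_{Q^n-3} \, \cdots \, x_{j+1} \, x_j ,
\]
so it suffices to prove $W_j \notin \mathcal{L}_{p,q}$ for every $j \in \{0, 1, \ldots, Q^n - 1\}$.

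The crucial input is that $\theta^n_{p,q}(a)$ contains the factor $\bar a\, a^p\, \bar a^{q+1}$ whenever $n \geq 3$: one checks that $aa\bar a$ already occurs in $\theta^2_{p,q}(a)$ when $p \neq q$ (either $p \geq 2$, in which case $aa\bar a$ is a factor of $\theta_{p,q}(a)$ itself, or $p = 1$ and $q \geq 2$, in which case $aa\bar a$ shows up at the $a^q\bar a$ junctions in $\theta^2_{p,q}(a)$), and then $\theta_{p,q}(aa\bar a) = a^p \bar a^q a^p \bar a^{p+q} a^q$ exhibits $\bar a a^p \bar a^{q+1}$ in its centre, which therefore propagates into every further iterate. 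Now $W_j$ is the concatenation of the reverse of $x_0 \cdots x_j$ with the reverse of $x_{j+1} \cdots x_{Q^n - 2}$; any occurrence of $\bar a a^p \bar a^{q+1}$ in $\theta^n_{p,q}(a)$ lying entirely in $[0, j]$ or entirely in $[j+1, Q^n - 2]$ contributes its reverse $\bar a^{q+1} a^p \bar a$ to $W_j$, and that reverse is forbidden in $\mathcal{L}_{p,q}$ by Lemma~\ref{lem:nonsym} — the contradiction sought. The main obstacle is to argue that for every $j$ at least one occurrence escapes the straddling window of length $p + q$ around $j$: each $aa\bar a$ in $\theta^{n-1}_{p,q}(a)$ induces such an occurrence in $\theta^n_{p,q}(a)$, and for $n \geq 3$ the iterate $\theta^{n-1}_{p,q}(a)$ harbours enough well-separated $aa\bar a$'s that no single $j$ can force all their images to straddle simultaneously. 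The few boundary values of $j$ where the prefix or suffix is too short to swallow a whole occurrence are mopped up by the companion forbidden pattern $\bar a\, a^q\, \bar a^{p+1}$ supplied by Remark~\ref{rem:nonsym}, or alternatively by directly examining the short word around the junction $x_1 x_0 x_{Q^n-2} x_{Q^n-3}$ and showing that it fails the $\theta_{p,q}$-recognisability of any factor of $v$.
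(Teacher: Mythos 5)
Your overall mechanism is the same as the paper's: coprimality of $Q^n-1$ with $Q^n$ forces a wrap superword whose first and last letters agree, parity rules out odd $n$, bijectivity (bar-swap) turns the colour-$a$ constraints into the statement that the level-$n$ type sequence spells out reversed segments of $\theta^n_{p,q}(a)$, and Lemma~\ref{lem:nonsym} supplies the reversed word $\bar a^{\,q+1}a^p\bar a\notin\mathcal{L}_{p,q}$ that yields the contradiction. The genuine gap is at the decisive step of your version: the claim that for \emph{every} cut position $j$ some occurrence of $\bar a\,a^p\,\bar a^{\,q+1}$ (or a companion pattern) lies entirely inside $[0,j]$ or entirely inside $[j+1,Q^n-2]$ is only asserted (``enough well-separated $aa\bar a$'s'', boundary $j$ ``mopped up'' by Remark~\ref{rem:nonsym} or an unspecified recognisability check), not proved, and the whole contradiction rests on it; note also that a straddling occurrence occupies a window of length $p+q+2$, not $p+q$. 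The gap is fillable: for even $n\geq 4$ the word $\theta^n_{p,q}(a)$ is a concatenation of $Q^{n-3}\geq Q\geq 3$ disjoint level-$3$ superwords $\theta^3_{p,q}(b)$, each of which contains an occurrence of $\bar b\,b^p\,\bar b^{\,q+1}$ (your $bb\bar b$ argument, applied to either letter $b$); at most one of these disjoint occurrences can meet the cut $\{j,j+1\}$ and at most one can meet position $Q^n-1$, so some occurrence survives on one side, and its reversal $\bar b^{\,q+1}b^p\bar b$, forbidden by Lemma~\ref{lem:nonsym}, is then a factor of $W_j$. Some explicit count of this kind is needed before your argument is a proof.

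It is worth seeing how the paper avoids the quantification over $j$ altogether: it argues locally at the two ends of the wrap superword. For even $n$, $w=\theta^n_{p,q}(a)$ begins and ends with $\theta^2_{p,q}(a)$, which already contains $\bar a\,a^p\,\bar a^{\,p+q}$ (for $p>1$; the word of Remark~\ref{rem:nonsym} handles $p=1$), so the backwards reading forced by $d=Q^n-1$ shows the progression can continue for at most about $(q+1)Q$ superwords to the right and $pQ$ to the left of the wrap block. Hence its total length is at most roughly $Q^2+Q+2<Q^n$ for $n>2$, contradicting $L>Q^n$ with no case analysis on where the wrap sits; this local bound is also exactly where the hypothesis $n>2$ enters. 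Your route, once completed, proves the stronger statement that every candidate word $W_j$ is illegal, but at the cost of the occurrence-positioning argument you left out.
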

\begin{proof}
Assume to the contrary that a long arithmetic progression of difference $Q^n-1$ and length $L>Q^n$ exists. Then this progression contains a level-$n$ superword $w=\theta_{p,q}^n(a)$ with two instances of this progression, implying that the first and last letter of $w$ agree. If $n$ is odd, this is not possible, since $w$ starts with $a$ and ends on $\bar{a}$.

If $n>2$ is even, $w$ starts and ends with 
\[
\theta^2(a)=\theta(a)^p\theta(\bar{a})^q=(a^p\bar{a}^q)^p(\bar{a}^pa^q)^q.
\]
Since by bijectivity a single letter determines the superwords, we can read off the sequence of words to the left and to the right of the word $w$ with two instances of the progression, provided the progression extends.

Consider first the case $p>1$. Assume that the progression continues to the right of $w$. As we are considering the difference $d=Q^n-1$, we are effectively reading the word $w$ ``backwards'' to determine the sequence of superwords that is required. As mentioned above, $w$ ends on $\theta^2(a)$
which (since $p>1$) contains the word $\bar{a}\, a^{p}\,\bar{a}^{p+q}$. According to Lemma~\ref{lem:nonsym}, this word does not occur in $\mathcal{L}_{p,q}$, since we are considering the case that $p\ne q$. This implies that the sequence of superwords required to continue the progression for $Q^2$ steps to the right contains a subsequence that corresponds to the images of a word under $\theta_{p,q}$ that is not in the language $\mathcal{L}_{p,q}$, which is a contradiction. This means that the progression cannot continue to the right for more than $(q+1)Q$ steps at most.

An analogous arguments holds if you assume that the progression extends to the left, showing that it can at most continue for $pQ$ steps to the left. So the total length of the progression is at most $Q^2+Q<Q^n-1$ for $n>2$.

If $p=1$ and hence $q>1$, we can use the same arguments as above, based on the word $\bar{a}^{p+1}\, a^{q}\,\bar{a}$ from Remark~\ref{rem:nonsym}, which occurs within $\theta^2_{p,q}(a)$ in this case.
\end{proof}

So we have established the existence of long arithmetic progressions for all generalised Thue--Morse sequences for differences $d=Q^n+1$, as well as for differences $d=Q^n-1$ in the case that $p=q$. The obvious conjecture is that these are again the longest arithmetic progressions that you can find, up to the given difference, in these systems, which we state as a conjecture.

\begin{conj}\label{conj:gen-TM}
For all\/ $n,p,q\in\Np$, $Q=p+q$, $n>2$, we have that
\[
  \max_{d\leq Q^n+1}A_{p,q}(d)=\begin{cases}
  A_{p,q}(Q^n{-}1) = Q^n+Q+2, & \text{if\/ $p=q=1$ and\/ $n$ even,}\\
  A_{p,q}(Q^n{-}1) = Q^n+Q, & \text{if\/ $p=q>1$ and\/ $n$ even,}\\
  A_{p,q}(Q^n{+}1) = Q^n+Q, & \text{if\/ $p=q=1$ and\/ $n$ odd,}\\
  A_{p,q}(Q^n{+}1) = Q^n+Q-1, & \text{if\/ $q>p=1$ or\/ $p>q=1$,}\\
  A_{p,q}(Q^n{+}1) = Q^n+Q-2, & \text{if\/ $p,q>1$ and\/ $p\ne q$ or\/ $n$ odd.}
  \end{cases} 
\]
\end{conj}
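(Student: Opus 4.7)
My plan is to mirror the structure of the Thue--Morse proofs in Section~\ref{sec:tm}: the lower bounds at $d=Q^n\pm 1$ already follow from the preceding propositions, so it remains only to establish matching upper bounds for every other $d\leq Q^n+1$. Using $A_{p,q}(Q^m d)=A_{p,q}(d)$ I first restrict to $\gcd(d,Q)=1$, and I handle $d=Q^n$ separately via $A_{p,q}(Q^n)=A_{p,q}(1)=Q$. For $d\leq Q^{n-1}+1$, induction on $n$ (with the small cases checked directly) gives $A_{p,q}(d)\leq Q^{n-1}+Q+2<Q^n+Q-2$, so the substantive work is the range $Q^{n-1}<d\leq Q^n+1$ with $d\notin\{Q^n-1,Q^n+1\}$.

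The key new step is a direct generalisation of Lemmas~\ref{lem:TM-bound-odd} and~\ref{lem:TM-bound-gen}. Consider first $d=Q^n-k$ with $\gcd(k,Q)=1$ and $1<k<Q^{n-1}(Q-1)$: since $\gcd(d,Q^n)=1$, a progression of length greater than $Q^n$ visits every residue modulo $Q^n$, and the $k$ pairs of residues $(r,r+d)$ lying in a common level-$n$ superword force
\[
\theta_{p,q}^n(a)_{[0,\,k)}\,=\,\theta_{p,q}^n(a)_{[Q^n-k,\,Q^n)}
\]
for each such superword encountered. Since $\theta_{p,q}^n(a)$ begins with $a^p\bar{a}^q$ and its terminal run is either $\bar{a}^q$ (for odd $n$) or $a^q$ (for even $n$), this prefix--suffix identity already fails in the leftmost letter for $n$ odd, while for $n$ even it forces a rigid alignment of the $p$-runs and $q$-runs that $\gcd(k,Q)=1$ prevents, exactly as in Lemma~\ref{lem:TM-bound-gen}. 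The symmetric case $d=Q^n+k$ yields no intra-superword pair, so the constraints are on letters in consecutive superwords; exploiting the bar-swap action and bijectivity of $\theta_{p,q}$, a parallel contradiction is obtained. The splits $p=q$ vs.\ $p\neq q$ (where Lemma~\ref{lem:nonsym} and Remark~\ref{rem:nonsym} substitute for the absent reflection symmetry, just as in the proposition immediately before the conjecture) and $n$ odd vs.\ $n$ even enter in the usual way and determine whether the maximum is attained at $d=Q^n-1$ or $d=Q^n+1$.

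The principal obstacle will be the boundary case $k=2$, which was explicitly excluded from Lemma~\ref{lem:TM-bound-gen}. When $p\geq 2$ the length-$2$ prefix reads $aa$, so the level-$1$ boundaries of the superword are not immediately determined; I would instead compare the length-$(Q+2)$ beginning $a^p\bar{a}^q a^p\ldots$ of $\theta_{p,q}^n(a)$ with the corresponding suffix and extract a contradiction from the placement of $p$-runs and $q$-runs near each end. This appears to need separate short arguments for $p=1$, $p=2$, and $p\geq 3$, with a mirrored case analysis for $q$, and is precisely where the $+Q$, $+Q-1$, $+Q-2$ corrections in the conjecture are pinned down. Once these boundary cases are settled, the bookkeeping for how many terms can be appended at either end of a progression of length $Q^n$ at $d=Q^n\pm 1$ transfers directly from Propositions~\ref{TM-long-plus1} and~\ref{TM-long-minus1}, with $p$ and $q$ replacing the $1$s, and the overall result follows.
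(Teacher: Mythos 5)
The first thing to say is that the paper does not prove this statement: it is deliberately left as Conjecture~\ref{conj:gen-TM}, and the authors only establish partial results towards the missing upper bounds (Proposition~\ref{prop:gen-TM-prime} for $Q$ prime and $k>Q$, and the subsequent proposition for $\min(p,q)=1$ with $Q$ prime). So there is no proof in the paper to compare yours against; what has to be judged is whether your sketch overcomes the obstruction that the authors themselves flag immediately after stating the conjecture. It does not.

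The concrete gap is your opening reduction ``using $A_{p,q}(Q^m d)=A_{p,q}(d)$ I first restrict to $\gcd(d,Q)=1$.'' That identity only lets you strip \emph{full} factors of $Q$ from $d$; when $Q$ is composite there remain differences $d$ with $1<\gcd(d,Q)<Q$ that cannot be reduced to the coprime case (e.g.\ $Q=6$, $d=Q^n-2$). For such $d$ a long progression does not visit every residue class modulo $Q^n$, so the prefix--suffix identity $\theta_{p,q}^n(a)_{[0,k)}=\theta_{p,q}^n(a)_{[Q^n-k,Q^n)}$ on which your entire argument rests is simply not forced; only the much weaker conclusion of Lemma~\ref{lem:gen-TM-rel} is available (agreement on $k/s$ positions spaced $s=\gcd(k,Q)$ apart, and only once $L>Q^n/s$), and extracting a contradiction from that is precisely the difficulty the authors name as the reason the statement remains open. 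A second, independent gap is the window $1<k\leq Q$ even when $\gcd(k,Q)=1$: your ``rigid alignment of $p$-runs and $q$-runs'' needs the length-$k$ prefix to determine the level-$1$ decomposition, which fails when that prefix is a constant run ($k\leq p$ gives $a^k$), and your claim that for odd $n$ the identity ``already fails in the leftmost letter'' is not justified in general --- the Thue--Morse argument of Lemma~\ref{lem:TM-bound-odd} used the reflection antisymmetry of $\theta^n(a)$, which is absent for $p\neq q$, and $w_0=a$ need not differ from $w_{Q^n-k}$ once $k>q$. The paper closes this window only for $\min(p,q)=1$. Until you have an argument covering composite $Q$ (non-trivial $\gcd(d,Q)$) and the small-$k$ range for general $p,q$, this is a restatement of the conjecture rather than a proof of it.
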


To establish this conjecture, we would need to generalise the results of Lemmas~\ref{lem:TM-bound-odd} and \ref{lem:TM-bound-gen}. This is not straightforward, though, because we now have to consider differences $d=Q^n-k$ where we may have that $k$ is a non-trivial divisor of $Q$, in which case the argument that in a long arithmetic progression all rest classes modulo $Q^n$ appear is no longer applicable. 

The following lemma details the relations within superwords arising from an assumed existence of long arithmetic progressions.

\begin{lem}\label{lem:gen-TM-rel}
Consider the language of the generalised Thue--Morse substitution\/ $\theta_{p,q}$ with\/ $p+q=Q$.
For\/ $n\in\N$, $n>1$, and\/ $1<k<Q^{n-1}$, set\/ $s=\gcd(k,Q)$ and assume that\/ $s\neq Q$. 
If there exists a long arithmetic progression of difference\/ $d=Q^n-k$ and length\/ $L>Q^n/s$, the
level-$n$ superwords\/ $w=\theta^n_{p,q}(a)$ have to satisfy\/ $w_{r+\ell s}=w_{r+\ell s+d}$ for some\/
$0\leq r<s$ and for all\/ $0\leq \ell < k/s$. 
\end{lem}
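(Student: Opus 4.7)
The plan is to reformulate the arithmetic progression in terms of its positions within level-$n$ superwords, and read off in-superword equalities from consecutive AP terms that share a superword.

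Indexing the AP as $x_j = x_0 + jd$ for $j=0,1,\ldots,L-1$ and decomposing $x_j = m_j Q^n + p_j$ with $p_j \in [0,Q^n)$, the monochromatic letter $v_{x_j}$ equals the $p_j$-th letter of the level-$n$ superword $\theta^n_{p,q}(a_j)$ sitting at index $m_j$, with $a_j \in \{0,1\}$ its type. Because $d = Q^n - k$, updating $j \to j+1$ is either a \emph{stay} (when $p_j < k$, so $m_{j+1} = m_j$ and $p_{j+1} = p_j + d$) or a \emph{transition} (when $p_j \geq k$, so $m_{j+1} = m_j + 1$ and $p_{j+1} = p_j - k$). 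A stay together with monochromaticity forces $\theta^n_{p,q}(a_j)_{p_j} = \theta^n_{p,q}(a_j)_{p_j + d}$, and the bar-swap symmetry $\theta^n_{p,q}(\bar a) = \overline{\theta^n_{p,q}(a)}$ lifts this to $w_{p_j} = w_{p_j + d}$ valid for both $w = \theta^n_{p,q}(0)$ and $w = \theta^n_{p,q}(1)$.

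Since $s = \gcd(k,Q)$ divides both $k$ and $Q^n$, the residue $p_j \bmod s$ is constant along the AP. Let $r = p_0 \bmod s \in [0, s)$; then every stay occurs at one of the $k/s$ candidate positions $\{r, r+s, \ldots, r + (k/s-1)s\} \subseteq [0, k)$. A straightforward counting argument --- the long-run fraction of stays is $k/Q^n$ --- shows that the hypothesis $L > Q^n/s$ yields at least $k/s$ stays overall in the AP.

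The heart of the proof, and the main obstacle I anticipate, is showing that these $k/s$ stays together cover \emph{all} of the $k/s$ candidate positions rather than merely a single orbit modulo $\gcd(k, Q^n)$, which may be strictly coarser than the target residue class modulo $s$. In the case $\gcd(k, Q^n) = s$ (which is automatic whenever $Q$ is a prime power and $s\neq Q$) each period of length $Q^n/s$ hits each of the $k/s$ target positions exactly once, and the conclusion is immediate. In the remaining case, which can occur when $Q$ has several prime factors, one needs to supplement the stay equalities with the auxiliary identities $w_{p_j} = w_{p_j - k}$ produced by same-type transitions ($a_{j+1} = a_j$, again lifted to both $w$ by the bar-swap) and chain them along the coset modulo $s$ using the hierarchical decomposition $\theta^n_{p,q} = \theta_{p,q} \circ \theta^{n-1}_{p,q}$, thereby reaching every target position $r + \ell s$ with $0\leq \ell < k/s$ and closing the argument.
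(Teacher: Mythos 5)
Your main line of argument is the paper's own: two consecutive terms of the progression fall in the same level-$n$ supertile precisely when the in-supertile position $r'$ satisfies $r'<k$ (your ``stays''), each such event forces $w_{r'}=w_{r'+d}$, and the bar-swap symmetry $\theta^n_{p,q}(\bar{a})=\overline{\theta^n_{p,q}(a)}$ makes this relation hold for both choices of $w$. In the regime $\gcd(k,Q^n)=s$ --- in particular whenever $Q$ is a prime power with $s\neq Q$, which covers every subsequent application of the lemma in the paper --- your argument is complete and coincides with the paper's.

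The obstacle you flag for composite $Q$ is genuine: the residues visited modulo $Q^n$ form a single coset of $\gcd(d,Q^n)\mathbb{Z}=\gcd(k,Q^n)\mathbb{Z}$, and this can be strictly coarser than $s\mathbb{Z}$ (e.g.\ $Q=6$, $n=2$, $k=4$ gives $s=2$ but $\gcd(k,Q^n)=4$, so only one of the two claimed relations is produced). You should know that the paper's proof does not resolve this either: it simply asserts that a progression of length $L>Q^n/s$ visits \emph{every} position congruent to $r$ modulo $s$, which is only true when $\gcd(d,Q^n)=s$. Your proposed repair, however, is not yet a proof. A ``transition'' yields $w_{p_j}=w_{p_j-k}$ only when the two adjacent supertiles have the same type; when they have opposite types it yields $w_{p_j}=\overline{w_{p_j-k}}$ instead, and since you do not control which case occurs, these auxiliary identities cannot simply be chained along the coset modulo $s$. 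The honest options are to add the hypothesis $\gcd(k,Q^n)=s$ to the statement (harmless for the intended applications, where $Q$ is prime) or to supply a genuinely new argument for the composite case.
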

\begin{proof}
We have $\gcd(d,Q)=\gcd(k,Q)=s$ and $r\equiv d\bmod Q$, so any such arithmetic progression of length $L$ visits all positions
\[
\bigl\{ r'\mid 0\leq r'<Q^n, r\equiv r'\bmod Q^n\bigr\} = 
\bigl\{r+\ell s\mid 0\leq \ell < \tfrac{Q^n}{s}\bigr\} 
\]
in a superword $w=\theta^n_{p,q}(a)$ for some letter $a\in\{0,1\}$ and for some $0\leq r<s$. Whenever there are two instances within a superword, the corresponding letters have to agree for either superword, since
$\theta^n_{p,q}(\bar{a})=\bar{\theta^{n}_{p,q}(a)}$. The condition for having two instances
within a superword is $r'+d<Q^n$, which means $r'<k$. With $r'=r+\ell s$, this results in 
$\ell<(k-r)/s$, and since $r<s$ this is equivalent to $\ell<k/s$, establishing the claim.
\end{proof}

\begin{propn}\label{prop:gen-TM-prime}
Consider the language of the generalised Thue--Morse substitution\/ $\theta_{p,q}$ for\/ $Q=p+q$ prime. Then, for\/ $n\in\N$, $n>1$, and\/ $Q<k<Q^{n-1}$, any arithmetic progression of difference\/ $d=Q^n-k$ has length\/ $L\leq Q^n$. 
\end{propn}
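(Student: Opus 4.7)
The plan is to argue by induction on $n$. The base case $n = 2$ is vacuous since the hypothesis $Q < k < Q^{n-1} = Q$ is empty. For the inductive step at $n \geq 3$, I would assume for contradiction that a monochromatic arithmetic progression of difference $d = Q^n - k$ and length $L > Q^n$ exists, and split into two subcases according to $\gcd(k, Q)$, which is either $1$ or $Q$ since $Q$ is prime.

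In the main case $\gcd(k, Q) = 1$, I would invoke Lemma~\ref{lem:gen-TM-rel} with $s = 1$ to obtain the key structural constraint that, for every $a \in \{0,1\}$, the length-$k$ prefix and the length-$k$ suffix of the level-$n$ superword $w = \theta^n_{p,q}(a)$ coincide. Writing $k = mQ + r$ with $m \geq 1$ (because $k > Q$) and $1 \leq r \leq Q - 1$ (because $\gcd(k, Q) = 1$), I would compare the first $Q$ letters of both sides: the prefix side supplies the level-$1$ superword $\theta_{p,q}(a) = a^p\bar{a}^q$, while the suffix side supplies the last $r$ letters of some level-$1$ superword $\theta_{p,q}(e)$ followed by the first $Q - r$ letters of the next $\theta_{p,q}(f)$. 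Writing out the three subcases $r < q$, $r = q$, and $r > q$ explicitly, and equating the resulting length-$Q$ strings letter-by-letter, I expect each subcase to collapse to the contradiction $a = \bar{a}$ using only the fact that $p \neq q$, which holds automatically when $Q$ is an odd prime. The remaining boundary $Q = 2$ forces $p = q = 1$, and for this Thue--Morse case the proposition is already contained in Lemma~\ref{lem:TM-bound-gen}.

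For the case $Q \mid k$, I would write $k = Qk'$ and $d = Qd'$ with $d' = Q^{n-1} - k'$, and exploit the shift invariance $A_{p,q}(d) = A_{p,q}(d')$. If $Q < k' < Q^{n-2}$, the inductive hypothesis directly gives $A_{p,q}(d') \leq Q^{n-1} < Q^n$. The values $k' = 1$ and $k' = Q$ (the latter after one further reduction) correspond to differences of the form $Q^m - 1$, whose $A_{p,q}$ is bounded above by $Q^m + Q + 2$ via the earlier propositions on long arithmetic progressions, which is comfortably below $Q^n$. For the intermediate values $2 \leq k' \leq Q - 1$, I would rerun the same prefix/suffix argument at the level-$(n-1)$ superword, now comparing substrings of length $k' < Q$ rather than length $Q$, in order to close out these residual cases.

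I expect the main obstacle to be the exhaustive subcase analysis in the prefix/suffix comparison. Although the algebra of matching $a^p \bar{a}^q$ against its cyclic shifts is conceptually straightforward, the breakdown by the relative sizes of $r$, $p$ and $q$ is delicate, and each subcase has to be verified individually to collapse to $a = \bar{a}$ using only $p \neq q$. A secondary subtle point is the intermediate edge values $2 \leq k' \leq Q - 1$ arising in the $Q \mid k$ reduction: for certain parities of $n - 1$ the length-$k'$ comparison at the reduced level does not immediately yield a contradiction, and one must either invoke the substitution structure at a deeper level or iterate the reduction once more in order to close the gap to the bound $Q^n$.
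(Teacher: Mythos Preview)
Your plan and the paper's proof share the same skeleton: invoke Lemma~\ref{lem:gen-TM-rel} with $s=1$ to force the prefix--suffix coincidence $w_{[0,k)}=w_{[Q^n-k,Q^n)}$ for $w=\theta_{p,q}^n(a)$, then extract a contradiction from the level-$1$ superword structure. The paper's execution is considerably shorter, though, because it compares the \emph{last} $Q$ letters of both sides rather than the first. On the suffix side $w_{[Q^n-k,Q^n)}$ ends at a level-$1$ boundary, so its final $Q$ letters are an honest level-$1$ superword $\theta_{p,q}(b)$; on the prefix side, the final $Q$ letters $w_{[k-Q,k)}$ straddle a level-$1$ boundary (since $k\not\equiv 0\bmod Q$), and one checks in a single stroke that such a straddling length-$Q$ window can never equal $x^p\bar{x}^q$ when $p\neq q$. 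This sidesteps your three-way split on $r$ versus $p,q$ entirely. Your first-$Q$-letters comparison is equivalent but forces you through the case analysis you already flagged as the main obstacle.

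On the $Q\mid k$ case: you are right that the paper's opening sentence ``since $Q$ is prime, $\gcd(k,Q)=1$'' is not literally justified by the stated hypotheses, and your reduction via $A_{p,q}(Qd')=A_{p,q}(d')$ is the natural patch. However, your plan to ``rerun the prefix/suffix argument'' for the residual values $2\leq k'\leq Q-1$ does not close as stated: when $k'<Q$ the length-$k'$ constraint compares only a handful of letters inside a single level-$1$ block on each end, and for suitable parities of $n-1$ (e.g.\ $n-1$ even with $p<k'\leq q$) the first and last $k'$ letters of $\theta_{p,q}^{n-1}(a)$ can genuinely agree, so no contradiction emerges at that level. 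These values of $k'$ are precisely the range $1<k'<Q$ that the paper explicitly excludes from the proposition (see the remark immediately following), so after one reduction you have left the proposition's scope and would need a separate bound --- which the paper does not supply either.
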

\begin{proof}
Since $Q$ is prime, we have that $\gcd(k,Q)=1$. From Lemma~\ref{lem:gen-TM-rel} we know that, if there exists an arithmetic progression of length $L>Q^n$, the superwords $w=\theta^n_{p,q}(a)$ have to satisfy $w_{[0,k)}=w_{[Q^n-k,Q^n)}$. If $k>Q$, this produces a contradiction, since the
final $Q$ letters of $w_{[0,k)}$ cannot be a valid level-$1$ superword, but $w$ has to end on a level-$1$ superword.
\end{proof}

Note that this lemma does not cover the differences $d=Q^n-k$ where $1<k<Q$, which we would need to establish the conjecture for prime values of $Q$ (except for $Q=2$ which brings us back to the Thue--Morse case). A partial result for $\min(p,q)=1$ is next, establishing Conjecture~\ref{conj:gen-TM} for this class.

\begin{propn}
Consider the language of the generalised Thue--Morse substitution\/ $\theta_{p,q}$ for\/ $\min(p,q)=1$ and\/ $Q=p+q$ prime. Then, for\/ $n\in\Np$ and any\/ $1<k<Q^{n-1}$, any arithmetic progression of difference\/ $d=Q^n-k$ has length\/ $L\leq Q^n$. 
\end{propn}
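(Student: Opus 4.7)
The plan is first to reduce to the case $\gcd(k, Q) = 1$ by exploiting $A_{p,q}(Qm) = A_{p,q}(m)$, which is an immediate consequence of $\theta_{p,q}(v) = v$ together with recognisability of $\theta_{p,q}$. Writing $k = Q^j k'$ with $\gcd(k', Q) = 1$, one has $d = Q^j(Q^{n-j} - k')$, so $A_{p,q}(Q^n - k) = A_{p,q}(Q^{n-j} - k')$, and it suffices to bound the right-hand side by $Q^n$. After relabelling $n - j \to n$ and $k' \to k$, I may assume $\gcd(k, Q) = 1$ and $1 \leq k < Q^{n-1}$. If the relabelled $k$ equals $1$, the desired bound (tracked back to the original $n$) follows from Proposition~\ref{TM-long-minus1} when $p = q = 1$ and from the preceding proposition on $A_{p,q}(Q^m - 1)$ for $p \neq q$ when $m \geq 3$, with the single edge case $p \neq q$, $m = 2$ dispatched by a direct verification (the crude bound $A_{p,q}(Q^2 - 1) \leq Q^3$ is more than enough). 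If the relabelled $k$ satisfies $Q < k < Q^{n-1}$, Proposition~\ref{prop:gen-TM-prime} applies verbatim.

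What remains is the range $2 \leq k \leq Q - 1$, where $\gcd(k, Q) = 1$ is automatic because $Q$ is prime. Assuming for contradiction that an arithmetic progression of difference $d = Q^n - k$ and length $L > Q^n$ exists, Lemma~\ref{lem:gen-TM-rel} applied with $s = 1$ forces
\[
   \theta_{p,q}^n(a)_{\ell} \, = \, \theta_{p,q}^n(a)_{\ell + d}
   \qquad\text{for all }\ell = 0, \dots, k - 1\text{ and all }a \in \{0, 1\}.
\]
Since $k \leq Q - 1 < Q$, each index $\ell$ sits inside the opening level-$1$ block $\theta_{p,q}(a) = a^p \bar a^q$ of $\theta_{p,q}^n(a)$, while $\ell + d \in [Q^n - k, Q^n)$ sits inside the final level-$1$ block, which equals $\theta_{p,q}(b)$ where $b$ is the last letter of $\theta_{p,q}^{n-1}(a)$. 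A trivial induction using $\theta_{p,q}(c) = c^p \bar c^q$ (whose last letter is $\bar c$) identifies $b = \bar a$ for $n$ even and $b = a$ for $n$ odd.

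The final step is to compare the prefix $\theta_{p,q}(a)_{[0, k)}$ with the suffix $\theta_{p,q}(b)_{[Q - k, Q)}$ in the four sub-cases determined by $\min(p, q) = 1$ (so $p = 1$ or $q = 1$) and the parity of $n$. Under $\min(p, q) = 1$ the block $a^p \bar a^q$ has a single ``odd letter out'' — the unique $a$ at position $0$ when $p = 1$, or the unique $\bar a$ at position $Q - 1$ when $q = 1$ — and the corresponding block in the suffix places this letter at a different position. For instance, when $p = 1$ and $n$ is odd, the prefix reads $a \bar a^{k - 1}$ whereas the suffix reads $\bar a^k$, which already disagree at $\ell = 0$; the remaining three sub-cases are analogous and always yield a mismatch inside $[0, k)$ as soon as $k \geq 2$. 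This contradicts the forced equality and proves $L \leq Q^n$. The main obstacle is really the bookkeeping across the four sub-cases and the edge case $p \neq q$, $m = 2$ mentioned above; the underlying structural point that drives the argument is that $\min(p, q) = 1$ isolates a single distinguished letter within every level-$1$ block of $\theta_{p,q}$, which is easily detected by comparing the opening and closing length-$Q$ blocks of $\theta_{p,q}^n(a)$.
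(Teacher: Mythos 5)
Your proof is correct and, in the genuinely new range $1<k<Q$, is essentially the paper's own argument: Lemma~\ref{lem:gen-TM-rel} with $s=1$ forces the first $k$ letters of each level-$n$ superword to reappear as its last $k$ letters, and $\min(p,q)=1$ makes the opening block $a\bar{a}^{k-1}$ (for $p=1$), respectively $a^{k}$ (for $q=1$), incompatible with any length-$k$ suffix of a level-$1$ block $b^{p}\bar{b}^{q}$ once $k\geq 2$. The one point where you go beyond the paper is the preliminary extraction of the $Q$-part of $k$: the paper simply invokes Proposition~\ref{prop:gen-TM-prime} for all $Q<k<Q^{n-1}$, although that proposition's proof assumes $\gcd(k,Q)=1$, which fails when $Q\mid k$; your factorisation $k=Q^{j}k'$ together with $A_{p,q}(Q^{j}m)=A_{p,q}(m)$ closes that gap cleanly. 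The price is the edge case $A_{p,q}(Q^{2}-1)\leq Q^{3}$ for $p\neq q$, which you assert as a ``direct verification'' but do not supply; it is not covered by the paper's $A_{p,q}(Q^{n}-1)$ proposition (whose hypothesis is $n>2$), so you should note that its proof extends to $n=2$ (giving $L\leq Q^{2}+(p+q+1)Q<Q^{3}$ for $Q\geq 3$), which is all that is needed there.
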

\begin{proof}
From Proposition~\ref{prop:gen-TM-prime}, we know that the claim holds from $k>Q$. 

If $p=1$, the level-$1$ superwords are of the form $a\bar{a}^q$ with $a\in\{0,1\}$. This means that, for $1<k<Q$, the superwords $w=\theta^n_{1,q}(a)$ start with $w_{[0,k)}=a\bar{a}^{k-1}$. Since $1\leq k-1<q$, this string of letters cannot occur at the end of the superword $w$.

Similarly, if $q=1$, the superwords $w=\theta^n_{p,1}(a)$ start with $w_{[0,k)}=a^k$. Since $k>q=1$, this string of letters cannot occur at the end of the superword $w$.
\end{proof}

\section{A result for general bijective substitutions}\label{sec:bij}

Clearly, our arguments for the existence of long arithmetic progressions observed for the Thue--Morse word and its generalisations considered in the previous section crucially depended on the particular arrangement of letters in the substitution rule. Even within the class of bijective substitutions (on an arbitrary alphabet), changing the order of letters will in general destroy the patterns we used above. However, if we restrict to binary bijective substitutions, we still have the following result.

\begin{propn}
  Let\/ $\theta$ be a binary bijective substitution of length\/ $Q$. Then, for any letter\/ $a$, the superwords\/ $\theta^{2n}(a)$ contain arithmetic progressions of distance\/ $Q^n+1$ and length $Q^n$. 
\end{propn}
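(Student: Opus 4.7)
The plan is to generalise the block-substitution construction used for the Thue--Morse and generalised Thue--Morse words. Given a binary bijective substitution $\theta$ of length $Q$, define the block substitution $\Theta$ by letting $\Theta(a)$ be the $Q{\times}Q$ array whose $i$-th row (for $0\leq i<Q$) is the word $\theta(\theta(a)_i)$. Reading this array row by row from top to bottom reproduces $\theta(\theta(a))=\theta^2(a)$, and by a straightforward induction generalising Lemma~\ref{lem:TM-Block}, the row-wise reading of the $Q^n{\times}Q^n$ block $\Theta^n(a)$ equals $\theta^{2n}(a)$. In particular, the entry in row $i$ and column $j$ of $\Theta^n(a)$ sits at position $iQ^n+j$ of $\theta^{2n}(a)$.

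Next, I would prove the key lemma: the main diagonal of $\Theta(a)$ consists entirely of $a$'s. The $(i,i)$-entry equals $\theta(\theta(a)_i)_i$. Writing $b=\theta(a)_i$, we split into two cases. If $b=a$, then $\theta(b)_i=\theta(a)_i=b=a$. If $b=\bar{a}$, then by the defining property of a binary bijective substitution the column values $\theta(0)_i$ and $\theta(1)_i$ are distinct letters, so $\theta(\bar{a})_i\neq\theta(a)_i=\bar{a}$, forcing $\theta(\bar{a})_i=a$. Either way the entry is $a$.

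Iterating, I would prove by induction on $n$ that the main diagonal of $\Theta^n(a)$ is all $a$'s. For the inductive step, view $\Theta^n(a)$ as a $Q{\times}Q$ grid of $Q^{n-1}{\times}Q^{n-1}$ sub-blocks, where the sub-block at grid position $(k,\ell)$ equals $\Theta^{n-1}\!\bigl(\Theta(a)_{(k,\ell)}\bigr)$. By the base case, each diagonal sub-block is $\Theta^{n-1}(a)$, whose own diagonal, by the induction hypothesis, consists of $a$'s; and the union of these diagonals is exactly the main diagonal of $\Theta^n(a)$.

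Reading off the conclusion is then immediate: the $Q^n$ diagonal entries of $\Theta^n(a)$ lie at positions $i(Q^n+1)$ in $\theta^{2n}(a)$ for $i=0,1,\ldots,Q^n-1$, they are all equal to $a$, and they form an arithmetic progression of common difference $Q^n+1$ and length $Q^n$. The one step that genuinely uses binary bijectivity---and is thus the only point that could fail for larger alphabets or for non-bijective rules---is the diagonal-invariance lemma for $\Theta(a)$; everything else is a routine block-substitution induction that parallels the earlier material in the paper.
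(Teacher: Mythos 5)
Your proof is correct and follows essentially the same route as the paper: arrange $\theta^{2n}(a)=\theta^{n}(\theta^{n}(a))$ as a $Q^n\times Q^n$ square and observe that every entry on the main diagonal is $a$, giving the progression at positions $i(Q^n+1)$. The only cosmetic difference is that you invoke bijectivity once at level $1$ and lift it by induction through the block substitution, whereas the paper applies the resulting bar-swap symmetry $\theta^{n}(\bar a)=\overline{\theta^{n}(a)}$ directly at level $n$; both are valid.
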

\begin{proof}
  For a binary bijective substitution, the superword $\theta^{2n}(a)$ consists of a sequence of $Q^n$ superwords $\theta^n(a)$, $\theta^n(\bar{a})$, arranged in the order of letters in $\theta^n(a)$. As before, if you arrange these words in a square, all elements on the diagonal are the same letter $a$: whenever the $n$-th letter in  $\theta^n(a)$ is $a$, the superword in the $n$-th row is  $\theta^n(a)$ whose $n$-th letter is again $a$, and whenever it is $\bar{a}$, the $n$-th row is the superword $\theta^n(\bar{a})$, whose $n$-th letter is $\bar{\bar{a}}=a$. So we conclude $\theta^{2n}(a)_0=\theta^{2n}(a)_{Q^n+1}=\theta^{2n}(a)_{2Q^n+2}=\dots=\theta^{2n}_{Q^{2n}-1}=a$ which completes the proof.
\end{proof}  

In the remainder of this section, we state a partial result for general bijective (and hence constant-length) substitutions on a general alphabet $\mathcal{A}$.

Note that, due to linear repetitivity, any words that appear in the language defined by such a substitution occurs within any infinite word with a well-defined positive frequency. This means that we can alternatively formulate results in measure-theoretic terms.

\begin{thm}
Let\/ $\varrho$ be a primitive, bijective substitution rule with expansion factor\/ $Q$ on a general alphabet\/ $\mathcal{A}$. Assume that there is a positive integer\/ $k$ such that the word\/ $a^{k+1}$ is not in the language of\/ $\varrho$ for any letter\/ $a\in\mathcal{A}$. Let\/ $v\in\mathcal{A}^{\mathbb{Z}}$ be a repetitive fixed point of $\varrho$ and let\/ $X_{v}$ be the orbit closure of\/ $v$, and denote the unique shift-invariant probability measure on\/ $X_{v}$ by\/ $\mu$. Then the following statements hold.\smallskip

\noindent\emph{(1)} For some\/ $n>0$ and\/ $a\in\mathcal{A}$, the set
\begin{align*}
 C_{n,a}=\bigl\{u\in X_{v}\mid u_{0}=u_{n}=u_{2n}=\cdots =u_{kn}=a\bigr\}
\end{align*}
has positive measure: $\mu(C_{n,a})>0$.
If\/ $\mu(C_{n,a})>0$ and\/ $n=Q^{s}n'$ for some integers\/ $s\geq 0$ and\/ $n'>0$, then for some\/ $b\in\mathcal{A}$, we have\/ $\mu(C_{n',b})>0$.
Moreover, if\/ $\mu(C_{n,a})>0$ and the images of\/ $\varrho^{m}$ contain all letters in\/ $\mathcal{A}$, then\/ $\mu(C_{Q^{m}n,b})>0$ for each letter\/ $b\in\mathcal{A}$.\smallskip

\noindent\emph{(2)} 
Suppose that\/ $a\in\mathcal{A}$, $n>0$, and that\/ $C_{n,a}$ has positive measure. Assume that\/ $n$ and\/ $Q$ are coprime. Then there is\/ $m_{0}>0$ such that, for all\/ $m>m_{0}$, there are no arithmetic progressions of the letter\/ $a$ with difference\/ $d=Q^{m}\pm n$ and length\/ $L=Q^{m}+k.$

\noindent\emph{(3)}
Suppose that the substitution\/ $\rho$ is binary, that is,
the cardinality of\/ $\mathcal{A}$ is 2.
Then there are positive integers\/ $n$ and\/ $m_0$ such that,
if\/ $s\geq 0$ and\/ $m\geq m_0$, there are no arithmetic 
progressions for an arbitrary letter with difference\/ $Q^{m+s}\pm Q^sn$ and
length\/ $Q^m+k$.
\end{thm}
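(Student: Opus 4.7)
My plan is to prove the three parts in sequence, with part~(3) reducing to parts~(1) and~(2).

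\textbf{Part (1).} The existence of some $n,a$ with $\mu(C_{n,a})>0$ follows directly from Theorem~\ref{vdW}: any sufficiently long factor of $v$, being a colouring of an interval by $|\mathcal{A}|$ colours, contains a monochromatic AP of length $k+1$ of some letter $a$ with some difference $n$. Unique ergodicity of the primitive substitution system $(X_v,\mu)$, a consequence of linear repetitivity, upgrades existence of this pattern to positive measure of the cylinder $C_{n,a}$. For the reduction $n=Q^sn'$ I invoke recognizability: every $u\in X_v$ admits a unique level-$s$ desubstitution $u=T^{\alpha}\varrho^s(u')$ with $\alpha\in\{0,\ldots,Q^s-1\}$ and $u'\in X_v$. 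The constraints $u_{jn}=a$ for $j=0,\ldots,k$ then sit at positions $jQ^sn'+\alpha$ in $\varrho^s(u')$, i.e.\ all at the common local position $\alpha$ within level-$s$ superwords at indices $jn'$; by bijectivity of the map $c\mapsto\varrho^s(c)_{\alpha}$ they force $u'_{jn'}=b$ for the unique $b=b(\alpha,a)$. Partitioning $C_{n,a}$ by the (finitely many) offsets $\alpha$ and pushing forward under desubstitution (which scales $\mu$ by $1/Q^s$ per class), some class has positive measure, giving $\mu(C_{n',b})>0$ for some $b$. The inflation statement uses primitivity: for $m$ large enough, $\varrho^m(a)$ contains every letter, so for any target $b$ pick $j_0$ with $\varrho^m(a)_{j_0}=b$; for $u\in C_{n,a}$ the shifted image $T^{j_0}\varrho^m(u)$ carries $b$ at positions $0,Q^mn,\ldots,kQ^mn$, hence lies in $C_{Q^mn,b}$, and invariance of $\mu$ under $T^{j_0}\varrho^m$ transfers positive measure.

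\textbf{Part (2).} Fix $m$ larger than a threshold $m_0$ to be chosen, and suppose for contradiction that an AP of letter $a$, difference $d=Q^m\pm n$, and length $L=Q^m+k$ exists in some $u\in X_v$, with positions $p_0,p_1,\ldots,p_{L-1}$. Desubstitute $u=T^{\alpha}\varrho^m(u')$. Because $\gcd(n,Q)=1$, the residues $p_r\bmod Q^m$ cover $\{0,1,\ldots,Q^m-1\}$, with exactly $k$ residues visited twice via pairs $(p_r,p_{r+Q^m})$ for $r=0,\ldots,k-1$. Each such pair consists of two positions at equal local position within level-$m$ superwords whose indices differ by exactly $d$; since both carry $a$ and $\varrho^m$ is bijective, these two superwords must be identical in $u'$, yielding $u'_{s_r}=u'_{s_r+d}$ with $s_r=\lfloor p_r/Q^m\rfloor$. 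A short floor-function computation (valid once $m_0$ is large enough that $(k-1)n<Q^{m_0}$) gives $s_r=r$ in the case $d=Q^m+n$, and $s_0=0$, $s_r=r-1$ for $r\geq1$ in the case $d=Q^m-n$. In either case each $u'_{s_r}$ is forced to equal a specific letter $c_r$, namely the unique $c$ with $\varrho^m(c)_{p_r\bmod Q^m}=a$, and the word $c_0c_1\ldots c_{k-1}$ of length $k$ appears twice in $u'$ at offsets differing by $d$. The main obstacle -- and where I expect the argument to become intricate -- is converting this ``matching window'' into an outright contradiction. My plan is to exploit the \emph{single}-hit constraints at the remaining $Q^m-k$ residues (which pin down $u'_s$ for almost every $s$ in an interval of length $\sim Q^m+n$), together with the hypothesis $\mu(C_{n,a})>0$, to force a $(k+1)$-power of some letter to appear in $u'$, contradicting the standing assumption $a^{k+1}\notin\mathcal{L}$ for every $a\in\mathcal{A}$.

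\textbf{Part (3).} I combine parts (1) and (2). Part~(1) yields some $n_0$ and $a_0$ with $\mu(C_{n_0,a_0})>0$; iterating the $Q^sn'\to n'$ reduction strips all factors of $Q$ from $n_0$, producing $n$ with $\gcd(n,Q)=1$ and some $a$ with $\mu(C_{n,a})>0$. For binary bijective $\varrho$ the bar-swap symmetry $\varrho(\bar c)=\overline{\varrho(c)}$ lifts positivity to both letters, so $\mu(C_{n,b})>0$ for every $b\in\mathcal{A}$. Now suppose for contradiction that an AP of any letter, difference $D=Q^{m+s}\pm Q^sn=Q^s(Q^m\pm n)$, and length $L=Q^m+k$ exists in some $u\in X_v$. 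All $L$ positions are congruent modulo $Q^s$, so by recognizability and bijectivity of $\varrho^s$ the level-$s$ desubstitution $u'\in X_v$ inherits an AP of length $L$, difference $Q^m\pm n$, and some letter $b$ (determined by the residue $\beta=\alpha\bmod Q^s$ via the unique $b$ with $\varrho^s(b)_{\beta}=a$). Since $\mu(C_{n,b})>0$, part~(2) applied to $u'$ rules out this AP for $m\geq m_0$, the desired contradiction; the threshold $m_0$ depends only on $n$ and $\varrho$ and so serves uniformly in $s\geq0$.
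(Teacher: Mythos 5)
Parts (1) and (3) of your proposal are essentially sound. Your existence argument in (1) via van der Waerden's theorem applied to a long factor of $v$, followed by unique ergodicity to upgrade a nonempty cylinder to positive measure, is a legitimate and arguably more elementary alternative to the paper's route, which invokes the multiple Birkhoff recurrence theorem applied to $T,T^2,\ldots,T^k$; the desubstitution and inflation steps match the paper. Part (3) likewise follows the paper's reduction: strip powers of $Q$ from $n$ using the second claim of (1), use bar-swap symmetry (valid for any binary bijective substitution, since each column map is either the identity or the swap) to get positive measure for both letters, desubstitute $s$ levels using that all terms of the progression are congruent modulo $Q^s$, and apply (2).

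Part (2), however, contains a genuine gap, which you yourself flag: you never derive the contradiction. Your bookkeeping of the $k$ doubly-hit residues only yields $k$ relations of the form $u'_{s_r}=u'_{s_r+d}$, which is a matching window, not a forbidden pattern, and your stated ``plan'' to combine the single-hit constraints with $\mu(C_{n,a})>0$ is not an argument. The missing idea is to use $\mu(C_{n,a})>0$ \emph{before} touching the long progression: by unique ergodicity of $(X_v,T)$ one can choose $m_0$ so that for all $m\geq m_0$ there is $\ell_0\in\{0,1,\ldots,Q^m-nk-1\}$ with $v_{\ell_0}=v_{\ell_0+n}=\cdots=v_{\ell_0+kn}=a$; the point of the range restriction is that the entire difference-$n$ progression then sits inside a \emph{single} aligned level-$m$ superword, so by bijectivity the $k+1$ local positions $\ell_0,\ell_0+n,\ldots,\ell_0+kn$ all decode the letter $a$ to one and the same letter $v_0$. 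Now, given a putative progression of $a$ with difference $d=Q^m\pm n$ and length $L=Q^m+k$, coprimality of $d$ and $Q^m$ lets you choose a phase $s_0\in\{0,\ldots,Q^m-1\}$ so that the $k+1$ consecutive terms $t+(s_0+s)d$, $s=0,\ldots,k$, land exactly at the local positions $\ell_0+sn$ (respectively $\ell_0+(k-s)n$) in $k+1$ \emph{consecutive} level-$m$ superwords; since each of these superwords must then be $\varrho^m(v_0)$, the desubstituted word contains $v_0^{\,k+1}$, contradicting the hypothesis that no $(k+1)$-power of a letter is legal. Without this alignment step, which is the heart of the theorem, your part (2) — and hence part (3), which relies on it — is incomplete.
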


\begin{proof}
(1) By applying multiple Birkhoff recurrence theorem 
\cite[Theorem~2.6]{F} (see also \cite{dlL-W} for an application to tilings)
to $X_{v}$ and the powers of the shift map $T, T^{2}, T^{3},\ldots, T^{k}$, we see there are $w\in X_{v}$ and $n_{1}<n_{2}<\cdots$ such that, for any $i\in\{1,2,\ldots,k\}$,
\begin{align*}
  T^{in_{j}}w\longrightarrow w
\end{align*}
as $j\rightarrow\infty$. The set $\{u\in X_{v}\mid u_{0}=w_{0}\}$
is a neighbourhood of $w$ and we may assume that for each $j\in\Np$ and $i\in\{1,2,\ldots ,k\}$,
the sequence $T^{in_{j}}w$ is in this set, which means that
$w_{in_{j}}=w_{0}$. By setting $a=w_{0}$, we see the set $C_{n_{j},a}$ is not empty.
Since $\mu(C_{n_{j},a})$ is the frequency of a patch in $v$ and $v$ is linearly
repetitive, the measure $\mu(C_{n_{j},a})$ is positive.
         
If $n=Q^{s}n'$ and $\mu(C_{n,a})>0$, then there is $t\in\mathbb{Z}$ such that
$v_{t}=v_{t+n}=\cdots =v_{t+nk}=a$. Since $v_{t},v_{t+n},\ldots ,v_{t+kn}$ are relatively at the
same position in supertiles of length $Q^{s}$, bijectivity implies that these supertiles are the images of a common letter, which means that there is $t'\in\mathbb{Z}$ such that
$v_{t'}=v_{t'+n'}=\cdots =v_{t'+n'k}$. This implies that $\mu(C_{n',b})>0$ for
$b=v_{t'}$.

Suppose that 
$\mu(C_{n,a})>0$ and that for each $b\in\mathcal{A}$, the image $\varrho^{m}(b)$ contains every letter in $\mathcal{A}$. There is $t\in\mathbb{Z}$ such that $v_{t}=v_{t+n}=\cdots =v_{t+nk}$ and
the supertiles $\varrho^{m}(v_{t+sn})$, $s=1,2,\ldots ,k$, are all the same and separated by
the difference $Q^{m}n$. Since $\varrho^{m}(v_{t})$ contains every letter, for each $b\in\mathcal{A}$
there is an arithmetic progression of the letter $b$ with difference $Q^{m}n$ and length $k+1$, and thus $\mu(C_{Q^{m}n,k})>0$.\smallskip
         
\noindent (2) By unique ergodicity of $(X_{v},T)$, we have the convergence
\begin{align*}
 \lim_{m\rightarrow\infty}\frac{1}{m}\sum_{\ell=0}^{m-1}
    \chi^{}_{C_{n,a}}\circ T^{\ell}(v)=\mu(C_{n,a})>0,                
\end{align*}
where $\chi^{}_{C_{n,a}}$ denotes the characteristic function of $C_{n,a}$.
We can take $m_{0}>0$ such that, for all $m\geq m_{0}$, we have
\begin{align*}
 \sum_{\ell=0}^{Q^{m}-nk-1}\chi^{}_{C_{n,a}}\circ T^{\ell}(v)>0,
\end{align*}
which implies that there is $\ell_{0}\in\{0,1,\ldots ,Q^{m}-nk-1\}$ such that
$T^{\ell_{0}}(v)\in C_{n,a}$, which in turn implies that $v_{\ell_{0}}=v_{\ell_{0+d}}=\cdots =v_{\ell_{0}+kn}$.
        
To prove the statement, assume that there is $t\in\mathbb{Z}$ such that
$v_{t}=v_{t+d}=\cdots =v_{t+(L-1)d}=a$, and we will obtain a contradiction.
Since $d$ and $Q^{m}$ are coprime, there is $s_{0}\in\{0,1,\ldots ,Q^{m}-1\}$ such that
\begin{align*}
   t+s_{0}d \equiv
   \begin{cases}
    \ell_{0} \bmod Q^{m}&\text{if $d=Q^{m}+d$}\\
    \ell_{0}+kn \bmod Q^{m}&\text {if $d=Q^{m}-d$}.
   \end{cases}
\end{align*}
For each $s\in\{0,1,\ldots ,k\}$, we have
\begin{align*}
   t+(s_{0}+s)d \equiv
   \begin{cases}
    \ell_{0}+sn \bmod Q^{m}&\text{if $d=Q^{m}+d$}\\
    \ell_{0}+(k-s)n \bmod Q^{m}&\text {if $d=Q^{m}-d$}.
   \end{cases}
\end{align*}
If $d=Q^{m}+n$, 
the letter $v_{t+(s_{0}+s)d}$, coincides with $a=v_{\ell_{0}+sn}$ and these are at relatively the 
same position in the supertile of length $Q^{m}$, and so the supertiles that include
$v_{t+(s_{0}+s)d}$, $s\in\{0,1,\ldots, k\}$, are all the same type. However, this contradicts the
assumption that $b^{k+1}$ is not a legal word for any $b\in\mathcal{A}$.
Similarly, we obtain a contradiction for the case where $d=Q^{m}-n$.

\noindent (3) By part (1), there are $n>0$ and $a\in\mathcal{A}$
such that $n$ and $Q$ are coprime and 
$C_{n,a}$ has positive measure. Since $\rho$ is binary,
$C_{n,b}$ has positive measure for any letter $b$.
By part (2), there is a positive integer $m_0$ such that
for any $m\geq m_0$,
there are no arithmetic progressions of an arbitrary letter in $v$ with difference
$d'=Q^m\pm n$ and length $L=Q^m+k$.
Let $m\geq m_0$ and $s\geq 0$; we shall prove that there are no
arithmetic progressions with difference $d=Q^{m+s}\pm Q^sn$
and length $L=Q^m+k$. To this end, let us assume to the contrary
that there is $t\in\mathbb{Z}$ such that
\begin{align*}
    v_t=v_{t+d}=\cdots v_{t+(L-1)d}.
\end{align*}
Since $t=t+d=\cdots =t+(L-1)d$ mod $Q^s$, the letters
$v_t,v_{t+d},\ldots v_{t+(L-1)d}$ are at relatively the same
position in supertiles of length $Q^s$.
Due to bijectivity, this means that these supertiles are the images of 
the same letter. Hence there is $t'\in\mathbb{Z}$ such that
\begin{align*}
    v_{t'}=v_{t'+d'}=\cdots =v_{t'+(L-1)d'}.
\end{align*}
However, this contradicts the above observation that
there are no arithmetic progressions with difference $d'$
and length $L$, completing the proof.
\end{proof}

\section{Conclusions and outlook}\label{sec:con}

We investigated the occurrence of long arithmetic progressions in the Thue--Morse word and a class of generalised Thue--Morse words. Clearly, the existence of these long progressions of difference $d$ and length $L\approx d$ was linked to the structure of the underlying substitution, corresponding to a diagonal in the induced block substitution carrying the same letter. If we change the order of letters in the substitution while retaining bijectivity, this property persists in the binary case, but will be lost for larger alphabets and, in general, such long progressions should not be expected to exist. We currently do not have any stronger bounds on the progressions for these cases, but it is likely that infinite series of progressions where the length $L$ grows linearly with the difference $d$ may not exist in this case. It would be interesting to quantify the behaviour for such systems.

Other potential generalisations include the investigation of other substitution sequences, either with more letters (see \cite{Olga2} for results on a different class of generalised Thue--Morse sequences) or for non-bijective substitutions, or quantitative versions of normality results \cite{AFF,S1,S2,MS}.

\section*{Acknowledgements}
This research was supported by the Engineering and Physical Sciences Research Council (EPSRC) via Grant EP/S010335/1 (UG, YN) and by an Early Career Fellowship from the London Mathematical Society (PS).


\begin{thebibliography}{99}

\bibitem{AS99}
J.-P.~Allouche, J.~Shallit, 
The ubiquitous Prouhet--Thue--Morse sequence, 
in: C.~Ding, T.~Helleseth, H.~Niederreiter (Eds.), 
Sequences and Their Applications, Springer, London, 1999, pp.~1--16.

\bibitem{AS}
J.-P.~Allouche, J.~Shallit, Automatic Sequences: Theory, Applications, Generalizations,
Cambridge University Press, Cambridge, 2003.

\bibitem{ACF}
S.V.~Avgustinovich, J.~Cassaigne, A.E.~Frid,
Sequences of low arithmetical complexity,
RAIRO -- Theor.\ Inf.\ Appl.\ 40 (2006) 569--582.

\bibitem{AFF}
S.V.~Avgustinovich, D.G.~Fon-Der-Flaass, A.E.~Frid,
Arithmetical complexity of infinite words, 
in: M.~Ito, T.~Imaoka (Eds.),
Words, Languages \& Combinatorics III, 
World Scientific, Singapore, 2003, pp.~51--62.

\bibitem{BG}
M.~Baake, F.~G\"{a}hler,
Pair correlations of aperiodic inflation rules via renormalisation:\ 
some interesting examples,
Topology  \& Appl.\ 205 (2016) 4--27.

\bibitem{BGG}
M.~Baake, F.~G\"{a}hler, U.~Grimm, 
Spectral and topological properties of a family of generalised Thue--Morse sequences,
J.\ Math.\ Phys.\ 53 (2012) 032701.

\bibitem{BG10} 
M.~Baake, U.~Grimm, Surprises in aperiodic diffraction,
J.\ Physics:\ Conf.\ Ser.\ 226 (2010) 012023.

\bibitem{TAO}
M.~Baake, U.~Grimm, Aperiodic Order. Vol.~1: A Mathematical Invitation,
Cambridge University Press, Cambridge, 2013.

\bibitem{CF}
J.~Cassaigne, A.E.~Frid, On the arithmetical complexity of Sturmian words, 
Theor.\ Comp.\ Sci.\ 380 (2007) 304--316.

\bibitem{F03}
A.E.~Frid, Arithmetical complexity of symmetric D0L words, 
Theor.\ Comp.\ Sci.\ 306 (2003) 535--542.

\bibitem{F05}
A.E.~Frid, Sequences of linear arithmetic complexity,
Theor.\ Comp.\ Sci.\ 339 (2005) 68--87. 

\bibitem{F19}
A.E.~Frid, Prefix palindromic length of the Thue--Morse word, 
Preprint arXiv:1906.09392 (2019).

\bibitem{F}
H.~Furstenberg, Recurrence in Ergodic Theory and Combinatorial Number Theory, 
Princeton University Press, 1981.

\bibitem{GR}
R.L.~Graham, B.L.~Rothschild, 
A short proof of van der Waerden's theorem on arithmetic progressions, 
Proc.\ Amer.\ Math.\ Soc.\ 42 (1974) 385--386. 

\bibitem{K}
M.~Keane, Generalized Morse sequences,
Z.\ Wahrscheinlichkeitsth.\ verw.\ Geb.\ 10 (1968) 335--353.

\bibitem{Kon}
J.~Konieczny, 
Gowers norms for the Thue--Morse and Rudin--Shapiro sequences, 
Ann.\ Inst.\ Fourier 69 (2019) 1897--1913.

\bibitem{dlL-W}
R.~de la Llave, A.~Windsor, 
An application of topological multiple recurrence to tiling,
Discrete Contin.\ Dyn.\ Syst.\ Ser.\ S 2 (2009) 315--324.

\bibitem{LPZ}
A.~de Luca, E.V.~Pribavkina, L.Q.~Zamboni, 
A coloring problem for infinite words, 
J.\ Comb.\ Theory A 125 (2014) 306--332.

\bibitem{Martin}
J.C.~Martin,
Substitution minimal flows,
Amer.\ J.\ Math.\ 93 (1971) 503--526.

\bibitem{MSS}
J.F.~Morgenbesser, J.~Shallit, T.~Stoll,
Thue--Morse at multiples of an integer,
J.\ Number Theory 131 (2011) 1498--1512.

\bibitem{MS}
C.~M\"{u}llner, L.~Spiegelhofer,
Normality of the Thue--Morse sequence along Piatetski--Shapiro sequences, II,
Israel J.\ Math.\ 220 (2017) 691--738.

\bibitem{NAL}
Y.~Nagai, S. Akiyama, J-Y. Lee, 
On arithmetic progressions in non-periodic self-affine tilings, Preprint
arXiv:2007.06005.

\bibitem{Olga1}
O.G.~Parshina, On arithmetic progressions in the generalized Thue--Morse word, in: F.~Manea, D.~Nowotka (Eds.): WORDS 2015, Lecture Notes in Computer Science 9304, Springer, Cham, 2015,  pp.~191--196.

\bibitem{Olga2}
O.G.~Parshina, On arithmetic index in the generalized
Thue--Morse word, in: S.~Brlek, F.~Dolce, C.~Reutenauer, \'{E}.~Vandomme (Eds.), WORDS 2017, Lecture Notes in Computer Science 10432, Springer, Cham, 2017,  pp.~121--131.

\bibitem{Olga3}
O.G.~Parshina, 
\textcyr{PERIODICHESKIE STRUKTURY V MORFICHESKIKH SLOVAKH I RASKRASKAKH BESKONECHNYKH 
CIRKULYANTNYKH GRAFOV}, PhD thesis, Universit\'{e} de Lyon and Sobolev Institute of Mathematics, 2019.

\bibitem{S1}
L.~Spiegelhofer, 
Normality of the Thue--Morse sequence along Piatetski--Shapiro sequences, 
Q.\ J.\ Math.\ 66 (2015) 1127--1138.

\bibitem{S2}
L.~Spiegelhofer, 
The level of distribution of the Thue--Morse sequence, 
Preprint arXiv:1803.01689 (2018).

\bibitem{vdW}
B.L.~van der Waerden, 
Beweis einer Baudetschen Vermutung, 
Nieuw.\ Arch.\ Wisk.\ 15 (1927) 212--216. 

\bibitem{BR}
C.~Wojcik, L.Q.~Zamboni,
Colouring problems for infinite words, in:
V.~Berth\'{e}, M.~Rigo (Eds.), 
Sequences, Groups, and Number Theory,
Birkh\"{a}user, Basel, 2018, pp.~213--231.

\end{thebibliography}
\end{document}